\documentclass[11pt]{amsart}

\usepackage{amsmath}
\usepackage{amsfonts}
\usepackage{amssymb}
\usepackage{mathtools}
\usepackage{graphicx}
\usepackage[all,cmtip]{xy}
\usepackage{enumitem}
\usepackage{aliascnt}
\usepackage[margin=3.56cm]{geometry}
\usepackage{mathrsfs}
\usepackage{physics}

\theoremstyle{plain}
\newtheorem{theorem}{Theorem}[section]

\newtheorem{assumption}[theorem]{Assumption}

\newaliascnt{corollary}{theorem}
\newtheorem{corollary}[corollary]{Corollary}
\aliascntresetthe{corollary}

\newaliascnt{lemma}{theorem}
\newtheorem{lemma}[lemma]{Lemma}
\aliascntresetthe{lemma}
\newaliascnt{proposition}{theorem}
\newtheorem{proposition}[proposition]{Proposition}
\aliascntresetthe{proposition}

\newaliascnt{hypotheses}{theorem}

\aliascntresetthe{hypotheses}

\DeclareMathOperator{\SL}{SL}
\DeclareMathOperator{\PSL}{PSL}
\DeclareMathOperator{\SO}{SO}

\DeclareMathOperator{\Sym}{Sym}
\DeclareMathOperator{\GL}{GL}
\DeclareMathOperator{\Sp}{Sp}
\DeclareMathOperator{\Aut}{Aut}
\DeclareMathOperator{\Span}{Span}
\DeclareMathOperator{\sgn}{sgn}
\DeclareMathOperator{\Nm}{Nm}
\DeclareMathOperator{\Spin}{Spin}

\theoremstyle{definition}
\newaliascnt{definition}{theorem}
\newtheorem{definition}[definition]{Definition}
\aliascntresetthe{definition}

\newaliascnt{example}{theorem}
\newtheorem{example}[example]{Example}
\aliascntresetthe{example}

\newaliascnt{remark}{theorem}
\newtheorem{remark}[remark]{Remark}
\aliascntresetthe{remark}

\newaliascnt{remarks}{theorem}

\aliascntresetthe{remarks}

\numberwithin{equation}{section}

\usepackage[pdftex,breaklinks,colorlinks,
citecolor=blue,
linkcolor=blue,
urlcolor=blue]{hyperref}

\newcommand{\tp}[1]{\prescript{t}{}#1}
\newcommand{\nm}[1]{\left\lVert#1\right\rVert}


\begin{document}

\title[Geodesic linking numbers and genus 2 Siegel modular form]{Linking numbers and non-holomorphic Siegel modular forms}

\begin{abstract}
We study generating series encoding linking numbers between geodesics in arithmetic hyperbolic $3$-folds. We show that the series converge to functions on genus $2$ Siegel space and that certain explicit modifications have the transformation properties of genus $2$ Siegel modular forms of weight $2$. This is done by carefully analyzing the integral of the Kudla--Millson theta series over a Seifert surface with geodesic boundary. As a corollary, we deduce a polynomial bound on the linking numbers.
\end{abstract}
\author{Mads B. Christensen}
\maketitle
\setcounter{tocdepth}{1}
\tableofcontents

\section{Introduction}

In a finite volume complete hyperbolic $3$-fold $M$ there is a rigid set of closed geodesics in natural bijection with the essential non-peripheral free homotopy classes of $\pi_1 M$. Furthermore, $M$ contains non-closed geodesics connecting the finitely many cusps. A natural tempting objective is to compute linking number invariants of these geodesics, and in particular when $M$ is arithmetic one might expect such invariants to carry interesting number theoretic information. For comparison and perspective, in a different but analogous setting Duke, Imamoglu, and Toth \cite{MR3635902} related the linking numbers of modular knots in $\SL_2(\mathbb{Z})\backslash \SL_2(\mathbb{R})$ to limiting values of modular cocycles for $\SL_2(\mathbb{Z})$, which subsequently inspired a surge of fascinating developments \cite{MR4194897,MR4538040}.

We realize $M$ as a quotient $\Gamma \backslash \mathcal{H}$ where $\mathcal{H}$ is hyperbolic $3$-space and $\Gamma\subset \PSL_2(\mathbb{C})$ is an arithmetic Kleinian group. 
To illustrate our results we consider the case where $\Gamma\subset\PSL_2(\mathbb{Z}[i])$ and where $M$ may also be realized as the complement of a link $\ell$ in $S^3$. There are infinitely many such $M$, obtained for example by taking cyclic covers of the Borromean link complement \cite{MR503003,MR1937957}. 
In the body of the paper we prove a more general result, for example with $\Gamma\subset\PSL_2(\mathcal{O})$ and $\mathcal{O}$ any imaginary quadratic order.

\subsection{Hyperbolic geodesics} 

For now, we will consider (oriented) closed geodesics $c$ for which the corresponding conjugacy class of elements $\gamma\in\Gamma$ have real trace. We follow \cite{MR1937957} and call $c$ and $\gamma$ hyperbolic. 

There is an abundance of hyperbolic geodesics in $M$. In fact, $M$ contains infinitely many totally geodesic immersed surfaces, all of which are arithmetic. It is well known that these correspond to $\Gamma$-equivalence classes of binary Hermitian forms over $\mathbb{Z}[i]$, and they are naturally parametrized by their discriminant which can be any positive integer \cite{MR1108918}. Each such arithmetic hyperbolic surface contains infinitely many closed geodesics, all of which are hyperbolic in $M$.

Supposing now that $c$ and $c'$ are two distinct hyperbolic geodesics in $M$, then 
the linking number of $c$ and $c'$ in $S^3$ is given by
\[
    \iota(c';s) = \text{the oriented intersection number of $c'$ and $s$}
\]
where $s\subset S^3$ is a Seifert surface with boundary $c$, and intersections on $c$ are counted with half-multiplicity.\footnote{A strange feature of arithmetic hyperbolic $3$-folds is that their geodesics may have intersections \cite{MR1458971}. Hence we also have to consider surfaces $s$ that do not embed in $M$, however we will loosely still call this a Seifert surface for $c$. } 

To obtain interesting results about linking numbers we sum over varying $c'$, and more precisely we sample hyperbolic geodesics from the intersections of arithmetic hyperbolic surfaces. For two distinct arithmetic hyperbolic surfaces in $M$, their intersection consists of a finite number of geodesics. Along any of these geodesics, the surfaces have a constant angle of intersection $\theta$ satisfying $(\tan\theta)^2\in \mathbb{Q}$. There is the following dichotomy
\begin{enumerate}[label = (\roman*)]
    \item if $\tan\theta\not\in \mathbb{Q}$ then the geodesic is closed and hyperbolic;
    \item if $\tan\theta\in \mathbb{Q}$ then the geodesic is of infinite length and has endpoints on the link at infinity $\ell$.
\end{enumerate}

Now, let $T=(\begin{smallmatrix}
    t_1 & t_0 \\ t_0 & t_2
\end{smallmatrix})$ be a half-integral positive definite symmetric matrix, and let $s$ be a Seifert surface in $M$. We let $\iota(T;s)$ be the sum of $\iota(c';s)$ over all intersection components $c'$ of pairs of arithmetic hyperbolic surfaces of discriminant $t_1$ and $t_2$, respectively, and angle of intersection $\theta$ satisfying $\cos\theta=t_0 / \sqrt{t_1t_2}$.\footnote{For $c'=\partial s$ we define $\iota(c';s)$ to be the oriented number of self-intersections of $c'$ plus the self-linking number with respect to the framing provided by the two arithmetic hyperbolic surfaces.
} In terms of the dichotomy above, this gives a sum over hyperbolic geodesics precisely when $\det T$ is not a rational square. 

Due to symmetries, the resulting number $\iota(T;s)$ depends only on the $\PSL_2(\mathbb{Z})$-equivalence class of $T$, with its sign reversed under $t_0\mapsto -t_0$. Also, from the dichotomy above, $\iota(T;s)$ depends only on the boundary of $s$, when $\det T$ is not a square. When $\det T$ is a square $\iota(T;s)$ does depend on the choice of $s$, however see Remark \ref{rem:ikm}.

\begin{example}\label{exa:ife}
    Let $\ell$ be the $(-2,3,8)$-Pretzel link, and let $M$ be the complement of $\ell$. There is a known presentation $M=\Gamma\backslash \mathcal{H}$ where $\Gamma\subset \PSL_2(\mathbb{Z}[i])$ has index $12$ \cite{MR1020042}. For the hyperbolic geodesic $c$ corresponding to $\gamma= (\begin{smallmatrix}
        196 & -135 \\ -45 & 31
    \end{smallmatrix})\in \Gamma$, Table \ref{tab:nzl} shows the first few nonzero values of $\iota(T;s)$, for $\det T$ not a square.

\begin{table}[ht]
    \centering
    \begin{tabular}{ c c c | c }
        $t_1$ & $t_2$ & $t_0$ & $\iota(T;s)$ \\\hline 
        $2$ & $3$ & $1/2$ & $8$ \\
        $2$ & $4$ & $1/2$ & $24$ \\
        $2$ & $5$ & $1/2$ & $16$ \\
        $3$ & $4$ & $1$ & $2$ \\
        $2$ & $6$ & $1/2$ & $-4$ \\
        $3$ &  $4$ & $1/2$ & $-4$ \\
        $2$ & $7$ & $1/2$ & $8$ \\
        $3$ & $5$ & $1$ & $4$ \\
        $3$ & $5$ & $1/2$ & $-32$ \\
        & \vdots && \vdots
    \end{tabular}
    \caption{Nonzero values of $\iota(T;s)$, for $T$ reduced with $t_0\geq0$ and $\det T<15$ not a square, and any Seifert surface $s$ for $c$.
    }
    \label{tab:nzl}
    \end{table}
\end{example}

\subsection{Main results}

Let $\mathfrak{H}$ denote genus $2$ Siegel space and let $\Sym_2(\mathbb{Z})$ be the set of half-integral symmetric square matrices. For $\tau\in \mathfrak{H}$ and a Seifert surface $s$ we define the generating series
\begin{equation}\label{iis}
    I(\tau;s) : = \sum_{\substack{
    T\in\Sym_2(\mathbb{Z})
    \\
    \text{pos. def.}}} 
    \iota(T;s)q^T
\end{equation}
where $q^T$ is short-hand for $e^{2\pi i \tr(T\tau)}$.

Our main new construction is an explicit canonical function $G^\ast(\tau;c)$, associated with every hyperbolic geodesic $c$ of $M$, which provides a (non-holomorphic) completion of \eqref{iis} similar to those in the literature on mock modular forms.

More precisely, we let $c$ be a union of hyperbolic geodesics $c_1,\ldots ,c_r$ and assume that $c$ is nullhomologous in $M$. This allows up to pick a Seifert surface $s$ for $c$ inside $M$, which we do henceforth.

\begin{theorem}\label{thm:imt}
    The function
    \[
        F(\tau;s) = I(\tau;s) +\sum_{i = 1,\ldots,r} G^\ast(\tau;c_i)
    \]
    transforms as a Siegel modular form of genus $2$ and level $4$, that is
    \[
        F((a\tau + b)(c\tau + d)^{ - 1}) = (c\tau + d)^2F(\tau)
    \]
    for all $(\begin{smallmatrix} a & b \\ c & d \end{smallmatrix}) \in \Sp_4(\mathbb{Z})$ with $a\equiv d \equiv (\begin{smallmatrix} 1 & 0 \\ 0 & 1 \end{smallmatrix}) \bmod 4$ and $b\equiv c\equiv (\begin{smallmatrix} 0 & 0 \\ 0 & 0 \end{smallmatrix}) \bmod 4$.
\end{theorem}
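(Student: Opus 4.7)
The plan is to realize $F(\tau;s)$ as the integral over $s$ of a genus $2$ Kudla--Millson theta kernel and to deduce the modular transformation law from the known Siegel modularity of this kernel. Concretely, I fix a rational quadratic space $(V,Q)$ of signature $(3,1)$ with a $\Gamma$-stable lattice $L$ such that $M=\Gamma\backslash \mathcal{H}$ is the associated arithmetic quotient. The genus-$2$ Kudla--Millson Schwartz form $\varphi_{KM}^{(2)}(\mathbf{x},z)$, for $\mathbf{x}=(x_1,x_2)\in V^2$ and $z\in\mathcal{H}$, is a closed $2$-form on $\mathcal{H}$ whose cohomology class is Poincar\'e dual to the geodesic $D_{\mathbf{x}}=x_1^{\perp}\cap x_2^{\perp}$ when the Gram matrix $Q(\mathbf{x})$ is positive definite. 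The associated theta kernel
$$\Theta_{KM}(\tau,z)=\sum_{\mathbf{x}\in L^2}\varphi_{KM}^{(2)}(\sqrt{v}\,\mathbf{x},z)\, q^{Q(\mathbf{x})},\qquad \tau=u+iv\in\mathfrak{H},$$
transforms in $\tau$ as a genus $2$ Siegel modular form of weight $(3+1)/2=2$ by the Kudla--Millson theorem; the level $4$ condition in the statement reflects the Weil representation attached to $L$.

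Next I compute $\int_{s}\Theta_{KM}(\tau)$ summand by summand. By Poincar\'e duality and an unfolding computation, the contribution of each $\Gamma$-orbit with Gram matrix $T$ is, up to boundary corrections, the oriented intersection number $s\cdot D_{\mathbf{x}}$, summing to $I(\tau;s)$. To extract the correction produced by the geodesic boundary $\partial s=c_1\cup\cdots\cup c_r$, I use the Kudla--Millson transgression $\psi_{KM}^{(2)}(\mathbf{x},z)$, a smooth $1$-form on $\mathcal{H}\setminus D_{\mathbf{x}}$ satisfying
$$\varphi_{KM}^{(2)}(\mathbf{x})=\delta_{D_{\mathbf{x}}}+d\psi_{KM}^{(2)}(\mathbf{x})$$
as currents. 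Stokes' theorem then yields, for each $\mathbf{x}$,
$$\int_{s}\varphi_{KM}^{(2)}(\mathbf{x})=s\cdot D_{\mathbf{x}}+\sum_{i}\int_{c_i}\psi_{KM}^{(2)}(\mathbf{x}),$$
and assembling the second term into a theta series over $L^2$ gives exactly $\sum_i G^*(\tau;c_i)$, which is non-holomorphic because $\psi_{KM}^{(2)}$ depends nontrivially on $v$. Thus $F(\tau;s)=\int_{s}\Theta_{KM}(\tau)$, and the desired transformation law is inherited from the Siegel modularity of $\Theta_{KM}$.

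The main obstacle is justifying this boundary analysis in the presence of two sources of singular behavior. First, $\psi_{KM}^{(2)}(\mathbf{x})$ is singular along $D_{\mathbf{x}}$, and when $D_{\mathbf{x}}$ meets the boundary geodesic $c_i$ (which happens for infinitely many $\mathbf{x}$ in each $q^T$-coefficient), the line integral $\int_{c_i}\psi_{KM}^{(2)}(\mathbf{x})$ has to be regularized, say by principal value or by an $\varepsilon$-tube cut-off. One must verify that this regularization is canonical, that it is independent of the choice of $s$ bounding $c_i$, and that the resulting finite parts match the half-multiplicity and self-linking conventions of $\iota(c';s)$ described in the footnotes. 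Second, $M$ is noncompact and the theta integral has genuine convergence problems near the cusps, precisely for $T$ with $\det T$ a rational square (the dichotomy above); handling these in the spirit of Funke--Kudla--Borcherds regularized theta lifts, by truncating $s$ at horoball cross-sections and showing the truncation error is absorbed into $G^*$ (or cancels between the Fourier coefficients and the non-holomorphic completion), is expected to be the most delicate part of the analysis.
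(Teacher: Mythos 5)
Your overall strategy --- realizing $F(\tau;s)$ as $\int_s\Theta_{KM}(\tau)$ and extracting the boundary correction from a transgression of the Kudla--Millson form --- is the same as the paper's, but the two ``obstacles'' you name at the end are left unresolved, and the first of them is where essentially all the content of the theorem lies. You posit a singular primitive $\psi_{KM}^{(2)}$ on $\mathcal{H}\setminus D_{\mathbf{x}}$ with $\varphi_{KM}^{(2)}=\delta_{D_{\mathbf{x}}}+d\psi_{KM}^{(2)}$ as currents, and then must regularize $\int_{c_i}\psi_{KM}^{(2)}(\mathbf{x})$ for the infinitely many $\mathbf{x}$ whose geodesic meets $c_i$; you do not carry this out. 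The paper avoids the singular object entirely: it works with an everywhere-smooth pair $(\varphi^0,\psi^0)$ satisfying the \emph{parametric} transgression $\frac{d}{dt}\varphi^0(Xt^{1/2})=\frac{1}{t}\,d\psi^0(Xt^{1/2})$ (Lemma \ref{lem:bpl}(e)), applies Stokes' theorem at each finite $t'$ to get
\[
    \int_s\Theta(T,\varphi^0,v)=\int_s\Theta(T,\varphi^0,vt')+\int_1^{t'}\!\!\int_c\Theta(T,\psi^0,vt)\,\frac{dt}{t},
\]
and only then lets $t'\to\infty$. Each $\int_c\Theta(T,\psi^0,vt)$ is an integral of a smooth form over a compact curve --- no principal value is ever taken --- and the convergence of the outer $t$-integral and of the series $G^\ast$ is established by an explicit Bessel-function evaluation (Proposition \ref{prp:iyf}, Corollary \ref{cor:dcc}) combined with bounds on indefinite representation numbers (Lemmas \ref{lem:brt} and \ref{lem:rnb}). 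None of this convergence analysis appears in your write-up, and your current-level formulation of the transgression makes it strictly harder.

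The other deferred step is equally serious: the identity $\lim_{t'\to\infty}\int_s\Theta(T,\varphi^0,vt')=\iota(T;s)$ is not ``Poincar\'e duality up to boundary corrections.'' When $c(X)$ meets $\partial s$ the limit produces half-multiplicities, and when $c(X)$ \emph{shares a component} with $\partial s$ it produces a winding number of $s$ around that component relative to the canonical framing of $c(X)$; establishing this is Proposition \ref{prp:bmt} and takes up a substantial part of Section \ref{sec:tln}, including transversality and homotopy-invariance arguments (Lemma \ref{lem:hti}, Proposition \ref{prp:ihi}), an unfolding (Lemma \ref{lem:ufl2}), and a delicate local analysis near the non-discrete bad boundary points. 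The definition of $\iota(T;s)$ is engineered precisely so that this limit comes out right, so ``matching the conventions'' is the theorem, not bookkeeping. Finally, your second worry is misdiagnosed: $S$ is compact and $s$ maps into $M$, so $\int_s\Theta_{KM}$ converges absolutely with no cusp issues (Lemma \ref{lem:nlc}); no horoball truncation or Borcherds-style regularization is needed anywhere, including when $\det T$ is a square.
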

\begin{remark}\label{rem:ikm}
    For $r=0$ where $c$ is empty, Theorem \ref{thm:imt} is due to Kudla and Millson \cite{MR1079646} and it states that $I(\tau;s)$ is a Siegel cusp form. In this case $s$ is a closed surface and $\iota(c';s)$ is given by the homological intersection pairing
    \[
        H_1(S^3,\ell)\times H_2(M)\to \mathbb{Z}.
    \]
    In particular, $\iota(c';s)=0$ when $c'$ is closed, and for $c'$ as in (ii) $\iota(c';s)$ equals the linking number of $s$ and the endpoints of $c'$ on $\ell$.

    For $r>0$, when $\det(T)$ is a square, this means that $\iota(T;s)$ will differ only by the coefficients of Siegel cusp forms for different choices of the Seifert surface $s$.
\end{remark}

\begin{remark}
    Several authors have considered analogues of Theorem \ref{thm:imt} for genus $1$. In \cite{MR3796415} Brunier et. al. prove that generating series of winding numbers of geodesics on the modular surface, are mock modular forms of weight $3/2$. In \cite{MR4570161} Funke and Kudla prove that generating series of linking numbers of certain cycles in the symmetric space of $\SO(2,n)$ have a non-holomorphic modular completion. 
\end{remark}

A non-trivial consequence of Theorem \ref{thm:imt} is that the series $I(\tau;s)$ converges, and consequently that $\iota(T;s)$ have sub-exponential growth in $T$. 
Using a Hecke-bound type argument together with estimates on the Fourier coefficients of $G^\ast(\tau;c)$, we are also able to obtain a polynomial estimate for $\iota(T;c)$ in $\det T$. 

\begin{theorem}\label{thm:ist}
    For every $\varepsilon>0$
    \begin{equation}\label{tmti}
        \abs{\iota(T;s)} \ll \det (T)^{3/2 + \varepsilon}.
    \end{equation}
\end{theorem}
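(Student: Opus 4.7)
The plan is to apply a Hecke-type bound to the completed form $F(\tau;s)$ furnished by \autoref{thm:imt}, and to separately control the Fourier coefficients of the non-holomorphic completion $\sum_i G^\ast(\tau;c_i)$. Writing $\tau = X + iY$ with $X$ real symmetric and $Y$ positive definite, and integrating against $e^{-2\pi i \tr(TX)}$ over a fundamental period for $X \in \Sym_2(\mathbb{R})/\Sym_2(\mathbb{Z})$, I isolate the $T$-th Fourier coefficient:
\[
    \iota(T;s)\,e^{-2\pi\tr(TY)} = \int_{X \bmod 1} F(X+iY;s)\,e^{-2\pi i\tr(TX)}\,dX - c_{G^\ast}(T;Y),
\]
where $c_{G^\ast}(T;Y)$ denotes the $T$-th Fourier coefficient of $\sum_i G^\ast(\tau;c_i)$ at height $Y$. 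It then suffices to bound the integral on the right and the remainder $c_{G^\ast}(T;Y)$ at a suitable choice of $Y$.

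To control the integral I use the classical Hecke argument in the Siegel setting. Since $F$ transforms with weight $2$, combining the standard identity $\det(\operatorname{Im}\gamma\tau) = \det(Y)/|\det(c\tau+d)|^2$ with an appropriate bound on $|F|$ on a fundamental domain for the level-$4$ congruence subgroup produces a polynomial estimate of the shape $|F(X+iY;s)| \ll \det(Y)^{-1}$, up to $\varepsilon$-losses near cusps (to be absorbed into the final exponent). Integrating in $X$ yields the same polynomial bound for the left-hand integral.

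To control $c_{G^\ast}(T;Y)$ I use the explicit description of $G^\ast(\tau;c_i)$ as a theta integral of Kudla--Millson type attached to the geodesic $c_i$ developed earlier in the paper. Computing its Fourier expansion gives explicit formulas in terms of incomplete gamma or confluent hypergeometric functions, which can then be estimated by hand. The target estimate is that, at $Y = T^{-1}$, one has $|c_{G^\ast}(T;T^{-1})| \ll \det(T)^{3/2+\varepsilon}$.

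Finally I optimize by setting $Y = T^{-1}$, so that $\tr(TY) = 2$ is bounded and $\det(Y)^{-1} = \det(T)$, and combine the two bounds to obtain
\[
    |\iota(T;s)| \ll \det(T) + \det(T)^{3/2+\varepsilon} \ll \det(T)^{3/2+\varepsilon},
\]
which is \eqref{tmti}. The main obstacle is the Fourier-coefficient estimate for $G^\ast$, because the Hecke-side contribution is already $\det(T)^1$, so the exponent $3/2+\varepsilon$ is governed entirely by the decay of the special-function coefficients of the non-holomorphic completion; most of the work is to make this estimate tight and uniform in $T$.
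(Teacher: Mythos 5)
Your overall strategy is exactly the one the paper uses: complete $I(\tau;s)$ to the modular form $F(\tau;s)$, extract the $T$-th Fourier coefficient at $Y=T^{-1}$, bound the coefficient of $F$ by the Hecke argument (which gives $\det(T)^{1}$, as in Lemma \ref{lem:hbb}), and push the whole burden of the exponent $3/2+\varepsilon$ onto the Fourier coefficient of $G^\ast$ at $v=T^{-1}$ (Proposition \ref{prp:bbb}). You have correctly identified that this last estimate is where all the work lies.

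The gap is that you leave that estimate as a ``target'' and the mechanism you sketch for it would not suffice. The $T$-th coefficient of $G^\ast$ is not a single special-function value: by \eqref{bef} it is a sum over all decompositions $T=T'+T''$ with $T'$ indefinite and $T''$ positive semidefinite, of products $r(T'',\varphi_f'')\,\rho(T',\varphi_f')\,W^\ast(\tr(T'v),4|\det(T'v)|)$. The decay of $W^\ast$ (an integral of a Bessel function, Corollary \ref{cor:dcc}, estimated via \eqref{Wfe}) only localizes this sum to the region where $\Delta(T'v)$ is bounded; it does not by itself produce the exponent. Two further inputs are essential and absent from your sketch: (1) a bound $|\rho(T',\varphi_f')|\ll|\det T'|^{\varepsilon}$ on the \emph{indefinite} representation numbers, which the paper obtains from divisor-type bounds on norms from real quadratic orders (Lemmas \ref{lem:brt} and \ref{lem:rnb}) — this is genuine arithmetic, not analysis of special functions; and (2) a count of the lattice points $T'\in\frac{1}{N}\Sym_2(\mathbb{Z})$ surviving the localization, which the paper carries out by mapping the decompositions into the rank-$2$ positive definite lattice $\mathfrak{L}_T$ of discriminant $D$ (Lemmas \ref{lem:ltd} and \ref{lem:nre}) and invoking $\sum_{A<x}r_T(A)\ll x/\sqrt{|D|}$. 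It is this lattice-point count — the covolume of $\frac{1}{N}\Sym_2(\mathbb{Z})^a$ being $\asymp\det(T)^{3/2}$ — that is responsible for the exponent $3/2$ (see Remark \ref{rem:eeo}), so the statement that the exponent ``is governed entirely by the decay of the special-function coefficients'' misattributes the source of the bound. Without these two ingredients the proof of the key proposition does not go through.
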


By Siegel's theorem, \eqref{tmti} can be recast in terms of the invariants of real quadratic fields. Let $c(T)$ denote the union of the geodesics associated with $T$, hence $\iota(T;s)$ can be interpreted as the linking number of $c(T)$ and $c$. For simplicity, suppose that $-4\det T$ is a (negative) fundamental discriminant. Let $h$ and $r$ denote the class number and regulator, respectively, of the real quadratic field $\mathbb{Q}(\sqrt{\det T})$. Then there exists $m_0>0$, not depending on $T$, such that every geodesic $c'$ in $c(T)$ has length $m r$ for a positive integer $m=m(c')<m_0$. Furthermore, Theorem \ref{thm:ist} implies
\[
    \abs{\iota(T;s)} \ll (hr)^{3+\varepsilon}.
\]

It would be interesting to combine Theorem \ref{thm:ist} with information on the number of geodesics in $c(T)$, or on the total length of $c(T)$. 
We just mention some known results in this direction for analogous situations. For genus $1$, there is a known formula for the number of arithmetic hyperbolic surfaces of a given discriminant $t$ which depends only on $t\bmod 4$ \cite{MR1108918}. 
Rickards has also studied the analogous problem involving intersections of geodesics of fixed (fundamental) discriminants and intersection angle inside an arithmetic hyperbolic surface, and has obtained interesting formulas in terms of the prime factorization of $\det T$ \cite{MR4259279,MR4565144}. 

The $3/2+\varepsilon$ exponent in Theorem \ref{thm:ist} appears to be optimal with our current method; see Remark \ref{rem:eeo}.

\begin{example}\label{exa:ise}
    Continuing Example \ref{exa:ife}, we illustrate the explicitness of the function $G^\ast(\tau;c)$. First, one has a decomposition
    \[
        G^\ast(\tau;c) = \sum_{\mu \in (\mathbb{F}_{229})^2} \Theta(\tau)_{\mu} G^\ast(\tau)_{\mu}.
    \]
    The function $\Theta(\tau)_\mu$ is just a holomorphic theta function
    \[
        \Theta(\tau)_\mu = \sum_{\substack{T\in \frac{1}{229}\Sym_2(\mathbb{Z}) \\ \text{pos. def.}}} r(T)_\mu q^T,
    \]
    with coefficients the representation numbers
    \begin{align*}
        r(T)_\mu = \#\left\{N \in M_2(\mathbb{Z}) : \tp{N} 
        \begin{pmatrix}
            1 & 0 \\ 0 & 1/229
        \end{pmatrix} 
        N = T,\; N\equiv \begin{pmatrix}
            \ast & \ast 
            \\
            \mu_1 & \mu_2
        \end{pmatrix} \bmod 229\right\}.
    \end{align*}
    The second function $G^\ast(\tau)_\mu$, being responsible for the non-holomorphic contribution, is more interesting. It has a Fourier expansion
    \[
        G^\ast(\tau)_\mu = \sum_{\substack{T\in \frac{1}{229}\Sym_2(\mathbb{Z}) \\ \text{indefinite}}} \rho(T)_\mu W^\ast(\tr(Tv), 4\abs{\det(Tv)}) q^T,
    \]
    with $v=\Im\tau$ and $W^\ast$ the analytic function defined in Corollary \ref{cor:dcc}.
    Here, the $\rho(T)_\mu$ are (indefinite) representation numbers
    \[
        \rho(T)_\mu = \sum_{N\in \gamma_P\backslash \mathscr{R}_\mu} \sgn\det N,
    \]
    \[
        \mathscr{R}_\mu = \left\{N\in M_2(\mathbb{Z}):
        \tp{N} P N = T , \; N\equiv \begin{pmatrix}
            \ast & \ast 
            \\
            \mu_1 & \mu_2
        \end{pmatrix} \bmod 229\right\} ,
    \]
    for the indefinite matrix, and its automorph,
    \[
        P = \begin{pmatrix}
            17805 & 377/2 \\ 377/2 & 457/229
        \end{pmatrix},
        \hspace{1cm} 
        \gamma_P = 
        \begin{pmatrix}
            -647384 & -6855 \\ 61160175 & 647611
        \end{pmatrix}^2.
    \]
\end{example}

\subsection{Ideas of proof}

Theorem \ref{thm:imt} and Theorem \ref{thm:ist} are special cases of two more general results, which we state in Section \ref{sec:mgs}. These apply to arithmetic Kleinian groups $\Gamma\subset\PSL_2(\mathbb{C})\cong \SO_0(3,1)$ which preserve an integral quadratic lattice of signature $(3,1)$. We also work more generally with functions valued in Weil representations, which we have chosen to suppress in the introduction. 

The basic idea behind the proof is to consider the integral of theta series attached with the Kudla--Millson Schwartz form $\varphi$ over a Seifert surface $s$
\begin{equation}\label{kmi}
    \int_s \Theta(\tau,\varphi) = \sum_{T\in \Sym_2(\mathbb{Q})}\int_s \Theta(T,\varphi,v) q^T.
\end{equation}
We manage to compute all Fourier coefficients of \eqref{kmi}. To do so, our key input is a genus $2$ variant of the Millson Schwartz form $\psi$ which is related to  $\varphi$ by a certain transgression equation. We found this form $\psi$ using the formalism of Mathai and Quillen \cite{MR836726}. The transgression equation yields a decomposition of the $T$th Fourier coefficient of \eqref{kmi}
\begin{equation}\label{tfc}
    \int_s \Theta(T,\varphi,v)=\lim_{t\to\infty}\int_s \Theta(T,\varphi,vt) + \int_1^\infty \int_c\Theta(T,\psi,vt) \frac{dt}{t}.
\end{equation}
When $T$ is degenerate, it turns out that the $T$th Fourier coefficient vanishes. Hence we only have to deal with nondegenerate $T$. We analyze each of the two limits on the right-hand side of \eqref{tfc} separately. The second term we compute completely explicitly using explicit formulas for $\psi$ and its naturality properties when restricting to hyperbolic geodesics. This yields the function $G^\ast(\tau;c)$.

For the first term, the difficulties are geometric. When $T$ is positive definite, we show that this term converges to the intersection number $\iota(T;s)$. Intuitively, the forms $\Theta(T,\varphi,vt)$ are of a Gaussian shape peaking along $c(T)$, and as $t\to\infty$ they converge to the delta current of $c(T)$. Making this into an argument requires care when $c(T)$ and $c$ have components in common. One of our key observations is that $c(T)$ is equipped with a canonical framing from the intersecting surfaces. For $s$ in general position, the final formula is
\[
    \lim_{t\to\infty}\int_s \Theta(T,\varphi,vt) =\sum_i \varepsilon_i + \frac{1}{2}\sum_j \varepsilon_j + \sum_k w_k,
\]
where the first sum is over intersection points of $c(T)$ and $s$ on the interior of $s$, the second sum is over intersection points of $c(T)$ and $s$ on $c$, the third sum is over common components of $c$ and $c(T)$, $\varepsilon_i$ is the orientation number at a given intersection point, and $w_k$ is the number of times that $s$ winds around the given component with respect to the framing of $c(T)$.

Finally, when $T$ is not positive definite the first term in the right-hand side of \eqref{tfc} vanishes. Therefore we obtain a complete explicit description of all Fourier coefficients of \eqref{kmi}. 

In section \ref{sec:exa} we study Example \ref{exa:ile}. For this example $c$ is contained in a arithmetic hyperbolic surface $M'$ isomorphic to $Y_1(5)$ and the series $I(\tau;s)$ is closely related to a similar series encoding winding numbers on $M'$ between $c$ and special $0$-cycles on $M'$. The special $0$-cycles are obtained by intersecting geodesics in the same systematic manner as in \cite{MR4259279}. We describe them completely explicitly as weighted sums of the CM points in $Y_1(5)$ attached to primitive positive definite binary quadratic forms $q$. More precisely, the weights for a given $q$ is the difference between representation numbers of the square and inverse square of $q$ in the form class group. This allows us to compute coefficients of $I(\tau;s)$.

\subsection*{Acknowledgements}
This work was done during my PhD at University College London. I thank my supervisor Luis Garcia for introducing me to this area and for his many suggestions and advice throughout. A previous version of this paper, without Theorem \ref{thm:ist}, was posted online in October 2024, and I thank Jens Funke, Yingkun Li, Claudia Alfes and Jan Bruinier for their interest, questions, and comments since then. I also thank Asbjørn Nordentoft, Yuan Yang, and David Angdinata for various useful discussions.

\section{Geodesic linking numbers}\label{sec:tln}

In this section, we describe the Archimedean setup and results of the paper. We refer to \cite{MR1937957}, \cite{MR651982}, and \cite{MR553218} for further details on Kleinian groups and symmetric spaces of orthogonal groups.

\subsection{Hyperbolic \texorpdfstring{$3$}{3}-folds}\label{sec:tst}

Let $(V,Q)$ be a real quadratic vector space of signature $(3,1)$ and let $G$ be the connected component of the identity of $\SO(V)$. It is convenient to use the following explicit model for $V$
\[
    V = \left\{\begin{pmatrix} - x & y \\ z & \overline x\end{pmatrix} : x\in \mathbb{C},\;y,z\in \mathbb{R} \right\} \hspace{1cm} Q(X)= -\det X = |x|^2+yz.
\]
We denote by $(\;,\;)$ the associated bilinear form on $V$ with the normalization such that $\frac{1}{2}(X,X)=Q(X)$. By a quick calculation 
\begin{equation}\label{bfe}
    (X,Y) = \tr(X\overline{Y}) \hspace{1cm} X,Y\in V.
\end{equation}
The Clifford algebra of $V$ is $M_2(\mathbb{H})$, where $V$ is embedded by the map $X\mapsto Xj$. The even part of the Clifford algebra is $M_2(\mathbb{C})$, so in particular $\Spin V = \SL_2(\mathbb{C})$ and thus $G = \PSL_2(\mathbb{C})$ with action on $V$ given by 
\begin{equation}\label{gaf}
    g\cdot X = gX\overline g^{-1} \hspace{1cm} g\in G, X\in V.
\end{equation}

Let $\mathcal{D}$ be the Grassmannian of oriented negative lines in $V$. By choosing a positively oriented basis vector for each line we may identify $\mathcal{D}$ with the two-sheeted hyperboloid $\{x\in V : Q(x)=-1\}$. For $x\in \mathcal{D}$ we have $T_x \mathcal{D} = \{ X\in V: (x,X)=0\}$, and the restriction of $Q$ to $T_x \mathcal{D}$ is positive definite hence it induces a metric on $\mathcal{D}$. We orient $V$ by taking the basis
\begin{equation}\label{vdo}
    \begin{pmatrix}
        -1 & \\ & 1
    \end{pmatrix}, \; \begin{pmatrix}
        -i & \\ & -i
    \end{pmatrix}, \; \begin{pmatrix}
        & 1 \\ 1 &
    \end{pmatrix}, \; \begin{pmatrix}
        & 1 \\ -1 & 
    \end{pmatrix}\in V
\end{equation}
to be positively oriented. Then we orient $\mathcal{D}$ by declaring that a basis $Y_1,Y_2,Y_3\in T_x\mathcal{D}$ is positively oriented if and only if the basis $Y_1,Y_2,Y_3,x\in V$ is positively oriented. The obvious action identifies $\SO(V)$ with the full (orientation-preserving) isometry group of $\mathcal{D}$.

Each component of $\mathcal{D}$ provides a model for hyperbolic $3$-space. Sometimes, it is convenient to use the Poincare half-space model instead. 
Let $\mathbb{H}$ be the (Hamilton) quaternions, and let $|dz|^2$ be the Euclidean metric on $\mathbb{H}$. Then the half-space model for hyperbolic $3$-space is given by
\[
    \mathcal{H} =\{z = x + jy\in \mathbb{H}: x\in \mathbb{C},y\in \mathbb{R}_{>0}\},
\]
with metric $|dz|^2/y^2$, and $G$ acts by isometries on $\mathcal{H}$ by
\[
    g z = (az + b)(cz + d)^{ - 1},\hspace{1cm} g =\begin{pmatrix} a & b \\ c & d \end{pmatrix} \in G,\;z\in \mathcal{H}.
\]
It is easy to check that the map
\[
    \mathcal{H}\to \mathcal{D}\hspace{1cm} z=x+jy\mapsto X(z):=\frac{1}{y}\begin{pmatrix} -x & |z|^2 \\ -1 & \overline x \end{pmatrix}.
\]
provides a $G$-equivariant orientation-preserving isometry between $\mathcal{H}$ and one of the two connected components of $\mathcal{D}$. In particular, we also have an identification
\[
    T_z \mathcal{H} = \{X\in V : (X,X(z))=0\}.
\]
Let $\Gamma\subset G$ be a torsion-free Kleinian group, and let $M:=\Gamma \backslash \mathcal{H}$ be the associated hyperbolic $3$-manifold. In this paper, our main interest is in subgroups $\Gamma$ that stabilize an integral lattice $L\subset V$. When $\Gamma$ is of finite index in the full stabilizer group of $L$, then $\Gamma$ is an example of an arithmetic Kleinian group of Maclachlin--Reid type \cite{MR2860192,MR1937957}.

\begin{example}\label{exa:ile}
    Let
    \[
        L = \left\{\begin{pmatrix} - x & y \\ z & \overline x\end{pmatrix} : x\in \mathbb{Z}[i],\;y,z\in \mathbb{Z} \right\}.
    \]
    This is an integral lattice in $V$ with stabilizer subgroup equal to $\PSL_2(\mathbb{Z}[i])$. Various famous hyperbolic $3$-folds can be obtained from torsion-free subgroups $\Gamma\subset \PSL_2(\mathbb{Z}[i])$. For example, there are exactly two torsion-free subgrous of index 12, up to $G$-conjugacy, and the associated hyperbolic $3$-folds are the complements of the Whitehead link and the $(-2,3,8)$-Pretzel link, respectively. There are no torsion-free subgroups of index less than $12$, and in fact these two examples are the $2$-cusped hyperbolic $3$-manifolds of minimal volume \cite{MR2661571}.

    There are obvious generalizations obtained by replacing $\mathbb{Z}[i]$ with other imaginary quadratic orders. For example, the figure-eight knot complement occurs for the Eisenstein integers. We refer to \cite{MR1020042,MR503003} for many more examples.
\end{example}


\subsubsection{Geodesic cycles}

Let $U\subset V$ be a positive definite subspace of dimension $r>0$. Then we have a totally geodesic submanifold $\mathcal{D}_U\subset \mathcal{D}$ of dimension $3-r$ that parametrises the negative lines in $V$ that are orthogonal to $U$ \cite{MR1079646}. Equivalently, we may identify $\mathcal{D}_U$ with the Grassmannian of oriented negative lines in the quadratic space $U^\perp$ of signature $(3-r,1)$. In particular $T_x \mathcal{D}_U = \{X\in U^\perp : (x,X)=0\}$. If $U$ is also oriented, then we orient $U^{\perp}$ via $V=U\oplus U^{\perp}$ and $\eqref{vdo}$. Then we also obtain an orientation of $\mathcal{D}_U$ by the rule that $Y\in T_x\mathcal{D}_U$ is positively oriented if and only if the basis $Y,x\in U^{\perp}$ is positively oriented.

If $x\in \mathcal{D}_U$, then tautologically $U\subset T_x \mathcal{D}$ and $T_x\mathcal{D}_U = T_x\mathcal{D}\cap U^\perp$. It follows that there is an orthogonal decomposition 
\begin{equation}\label{tod}
    T_x\mathcal{D}=U\oplus T_x\mathcal{D}_U,
\end{equation}
which yields an identification between the normal bundle $N_{\mathcal{D}_U / \mathcal{D}}$ and the trivial bundle with fiber $U$. If $U$ is equipped with a basis, this gives a framing of $\mathcal{D}_U$. Similarly, if $U$ is equipped with an orientation, this gives an orientation of $N_{\mathcal{D}_U/\mathcal{D}}$. It is easy to check that the orientations of $N_{\mathcal{D}_U / \mathcal{D}}$, $\mathcal{D}_U$, and $\mathcal{D}$ are compatible with \eqref{tod}.

We transfer these definitions to $\mathcal{H}$. 
Let $c_U\subset \mathcal{H}$ be the component of $\mathcal{D}_U$ contained in $\mathcal{H}$. Thus
\begin{align*}
    c_U &= \{z\in \mathcal{H} : (X,X(z)) = 0\;\text{for all}\;X\in U\}, \\
    T_z c_U &= \{ 
    Z\in U^{\perp}: (Z,X(z))=0
    \}, \\
    N_{c_U/\mathcal{H},z} &= U.
\end{align*}
When $U$ is equipped with a basis, we similarly obtain a framing of $c_U$, and when $U$ is oriented we obtain compatible orientations of $N_{c_U/\mathcal{H}}$ and $c_U$.

Let $G_U\subset G$ and $\Gamma_U\subset \Gamma$ be the pointwise stabilizers of $U$. In fact, $G_U$ coincides with the connected component of the identity in $\SO(U^\perp)$, and in particular it acts by orientation-preserving isometries on $c_U$. We let $C(U):= \Gamma_U\backslash c_U$. Then we have an immersive totally geodesic map
\[
    c(U): C(U)\to M,
\]
and we also have an identification between the normal bundle $Nc(U)$ and the trivial bundle on $C(U)$ with fiber $U$. In particular if $U$ is equipped with a basis, resp. an orientation, this gives a framing of $c(U)$, resp. compatible orientations of $Nc(U)$ and $C(U)$.

The cycles $c_U$ and $c(U)$ are equivariant in the following sense. For $g\in G$ we have isomorphisms
\begin{equation}\label{cge1}
    c_{g^{-1}U}\cong c_{U},\hspace{1cm}\text{and}\hspace{1cm}G_{g^{-1}U}\cong G_U,
\end{equation}
given by multiplication and conjugation by $g$, respectively. For $\gamma\in \Gamma$ we similarly have $\Gamma_{\gamma^{-1}U}\cong \Gamma_U$ and a commutative diagram
\begin{equation}
\xymatrix{
C(\gamma^{-1}U)\ar[rr]^{\cong}\ar[rd]_{c(\gamma^{-1}U)}&& C(U)\ar[ld]^{c(U)}\\
&M&
}
\end{equation}
In particular, up to reparametrization $c(U)$ depends only on the $\Gamma$-orbit of $U$.

\begin{example}\label{exa:hpe}
    
For $r=1$, suppose that $U$ is spanned by a single vector $X=(\begin{smallmatrix} -\overline B & -C \\ A & B \end{smallmatrix})$. An easy calculation with \eqref{bfe} shows that $c_U$ is determined by the equation 
\[
    A|z|^2+ Bz+\overline{Bz}+C=0, \hspace{1cm}z\in \mathcal{H},
\]
which is a hyperbolic plane in $\mathcal{H}$. For example, if $X=(\begin{smallmatrix} i &  \\  & i \end{smallmatrix})$, then $c_U$ is determined by $\Im x=0$ and by \eqref{gaf} we see that $G_U=\PSL_2(\mathbb{R})$. 

Using \eqref{cge1} to reduce to the case $X=(\begin{smallmatrix} i &  \\  & i \end{smallmatrix})$, we see that $\Gamma_U$ is a Fuchsian subgroup of $\Gamma$ and that $c(U)$ is a totally geodesic immersed surface in $M$.
\end{example}

\begin{example}
For $r=2$, Example \ref{exa:hpe} shows that $c_U$ is the intersection of two hyperbolic planes. The condition that $U$ is positive definite then ensures that the intersection is transverse and $c_U$ is a hyperbolic line in $\mathcal{H}$. 

In this case, $G_U$ is isomorphic to $\mathbb{R}$ as an additive group, hence $\Gamma_U$ is either trivial or infinite cyclic. In the first case $c(U)$ is a geodesic of infinite length in $M$, and in the second case $c(U)$ is a closed geodesic.
\end{example}

For $r>0$, Example \ref{exa:hpe} implies that every element of $G_U$ is hyperbolic, that is $\tr g\in \mathbb{R}$. If $U$ is oriented, $r=2$, and $\Gamma_U$ is infinite cyclic, so that $c(U)$ is an oriented closed geodesic in $M$, let $\gamma_U\in\Gamma_U$ be the generator that translates on $c_U$ in the positive direction. Then $\gamma_U$ is hyperbolic and the conjugacy class of $\gamma_U$ corresponds to the free homotopy class of $c(U)$ in $\pi_1 M$. We call $c(U)$ a hyperbolic geodesic

\begin{remark}
We warn the reader that $\gamma_U$ need not be a primitive element of $\Gamma$, however, it is always primitive in the hyperbolic sense, that is, $\gamma_U\neq\gamma^n$ for any hyperbolic element $\gamma\in\Gamma$ and $n>1$. Conversely, every hyperbolic element $\gamma\in\Gamma$, which is primitive in the hyperbolic sense, is equal to $\gamma_U$ for some $U$. Unless explicitly stated otherwise, we always assume that our hyperbolic geodesics are primitive in this sense.
\end{remark}


\subsection{The forms \texorpdfstring{$\varphi^0(X)$}{f0(X)} and \texorpdfstring{$\psi^0(X)$}{y0(X)}}\label{sec:fai}

For a vector $X\in V$ we let $u(X):\mathcal{H}\to \mathbb{R}$ denote the function $z\mapsto (X,X(z))$. In coordinates, if $X=(\begin{smallmatrix} -\overline B & -C \\ A & B \end{smallmatrix})$ then we have
\begin{equation}\label{ufe}
    u(X,z) = \frac{1}{y}( A |z|^2 + Bx+\overline{Bx} + C).
\end{equation}
Rather than a single vector, we are really interested in considering pairs of vectors $X=(X_1,X_2)\in V^2$. For such a pair $X$, we let $u(X)=(u(X_1),u(X_2))$, which we regard as a function $u(X):\mathcal{H}\to \mathbb{R}^2$, and we let $r(X) = (u(X_1)^2+u(X_2)^2)^{1 /2}$ be the Euclidean norm of $u(X)$. 
\begin{definition}
    Let $X=(X_1,X_2)\in V^2$. We define two differential forms on $\mathcal{H}$ by
    \begin{align*}
        \varphi^0(X) &:= du(X_1)\wedge du(X_2) e^{ - \pi r(X)^2} \\
        \psi^0(X) &:= \frac{1}{2}(u(X_1)du(X_2) - u(X_2)du(X_1)) e^{ - \pi r(X)^2}.
    \end{align*}
\end{definition}
$\varphi^0(X)$ is closely related to the Kudla--Millson Schwartz form, see Remark \ref{rmk:kmr}. 

\begin{lemma}\label{lem:bpl} Let $X\in V^2$. Then
    \begin{enumerate}[label = (\alph*),font=\upshape]
        \item for every $g\in G$, we have $g^\ast \varphi^0(X)= \varphi^0(g^{-1}X)$ and $g^\ast\psi^0(X) = \psi^0(g^{-1}X)$;
        \item for every $h\in \SO(2)$, we have $\varphi^0(X h) = \varphi^0(X)$ and $\psi^0(Xh) = \psi^0(X)$;
        \item if $X_1$ and $X_2$ are linearly dependent, then $\varphi^0(X)=\psi^0(X)=0$;
        \item $\varphi^0(X)$ is closed;
        \item for $t>0$ we have the transgression equation
        \begin{equation}\label{fte}
            \frac{d }{dt} \varphi^0(Xt^{1 /2}) = \frac{1}{t}d\psi^0(Xt^{1 /2}).
        \end{equation}
    \end{enumerate}
\end{lemma}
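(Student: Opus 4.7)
The plan is to verify each statement by direct computation from the definitions. All relevant objects are built from the linear functions $u_i = u(X_i,z) = (X_i, X(z))$ on $\mathcal{H}$, their differentials, and the Gaussian factor $e^{-\pi r^2}$ with $r^2 = u_1^2+u_2^2$.

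Parts (a)--(d) follow directly. For (a), the $G$-equivariance of $z\mapsto X(z)$ combined with the fact that $G$ acts on $V$ by isometries yields
\[
u(X, gz) = (X, g\cdot X(z)) = (g^{-1}\cdot X, X(z)) = u(g^{-1}X, z),
\]
so $g^\ast u(X_i) = u(g^{-1}X_i)$, and both claims follow by naturality of $d$ and $\wedge$. For (b), a rotation $h\in\SO(2)$ sends $(u_1,u_2)$ to $(u_1,u_2)h$; the Gaussian is invariant because $h$ is orthogonal, $du_1\wedge du_2$ is invariant because $\det h = 1$, and $u_1 du_2 - u_2 du_1$ is the standard symplectic form on $\mathbb{R}^2$, hence $\SO(2)$-invariant. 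For (c), if $X_2 = \lambda X_1$ then $u_2 = \lambda u_1$ and $du_2 = \lambda du_1$, so both wedges collapse. For (d), one has $d(du_1\wedge du_2) = 0$ and $d(e^{-\pi r^2})\in \Span(du_1, du_2)$, so $d(e^{-\pi r^2})\wedge du_1\wedge du_2 = 0$ for degree reasons.

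Only (e) requires nontrivial bookkeeping. Using $d(u_1 du_2 - u_2 du_1) = 2\,du_1\wedge du_2$ together with
\[
d(e^{-\pi r^2})\wedge(u_1 du_2 - u_2 du_1) = -2\pi(u_1 du_1 + u_2 du_2)\wedge(u_1 du_2 - u_2 du_1)\,e^{-\pi r^2} = -2\pi r^2\, du_1\wedge du_2\,e^{-\pi r^2},
\]
the Leibniz rule gives $d\psi^0(X) = (1 - \pi r^2)\,du_1\wedge du_2\,e^{-\pi r^2}$. Substituting $X\mapsto Xt^{1/2}$ rescales $u_i \mapsto t^{1/2}u_i$, $r^2\mapsto tr^2$, $du_i\mapsto t^{1/2}du_i$, whence $\frac{1}{t}d\psi^0(Xt^{1/2}) = (1-\pi tr^2)\,du_1\wedge du_2\,e^{-\pi tr^2}$. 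This matches $\frac{d}{dt}\bigl[t\,e^{-\pi tr^2}\bigr]du_1\wedge du_2 = \frac{d}{dt}\varphi^0(Xt^{1/2})$, which is (e). The only real subtlety in the lemma is tracking the sign in the wedge product above so that one arrives at $(1-\pi r^2)$ rather than $(1+\pi r^2)$.
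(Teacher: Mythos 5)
Your proof is correct and follows essentially the same route as the paper's: part (a) via the equivariance $u(g^{-1}X_i,z)=u(X_i,gz)$, part (b) via the $\OO(2)$-invariance of $r(X)$ together with the $\det$-scaling of the alternating expressions, and parts (d) and (e) by the Leibniz rule, with both sides of \eqref{fte} reducing to $(1-\pi r(X)^2 t)e^{-\pi r(X)^2 t}\,du(X_1)\wedge du(X_2)$, exactly as in \eqref{ter}. No gaps.
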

\begin{proof}
(a) follows from $u(g^{-1}X_i, z)=u(X_i,g z)$. For any matrix $a\in \GL_2(\mathbb{R})$ we have $u(Xa) = u(X)a$. Since $r(X)$ is the standard Euclidean norm of $u(X)$ it follows that $r(Xa)=r(X)$ if $a\in O(2)$. If $X=(X_1,X_2)\mapsto A(X_1,X_2)$ is any alternating pairing on $V$ then $A(Xa)=\det(a) A(X)$. Applying this to the pairings $du(X_1)\wedge du(X_2)$ and $u(X_1)du(X_2)-u(X_1)du(X_1)$ then yields (b). Lastly, (d) and (e) are direct calculations using the Leibniz rule. Both sides of \eqref{fte} are equal to
\begin{equation}\label{ter}
    (1- \pi r(X)^2t) e^{ - \pi r(X)^2 t} du(X_1)\wedge du(X_2).
\end{equation}
\end{proof}


\subsubsection{Averaging}

We now construct forms on $M$ by the method of 'averaging'. In the following, we let $\sigma^0$ denote one of the symbols $\varphi^0$ or $\psi^0$. Let $\Gamma_X\subset \Gamma$ denote the stabilizer of $X$ and let 
\begin{equation}\label{asf}
    \alpha(X,\sigma^0) : = \sum_{\gamma\in \Gamma_X \backslash \Gamma} \sigma^0(\gamma^{ - 1 }X).
\end{equation}

Recall that if $M$ is a smooth manifold, then $\Omega^\ast(M)$ has a complete topology generated by the family of seminorms which measure uniform convergence of all the partial derivatives of the coefficients in compact coordinate neighborhoods. If $(\omega_i)_{i\in I}$ is a countable family of smooth forms on $M$, then we say that the series $\sum_{i\in I}\omega_i$ converges normally if $\sum_{i\in I} p(\omega_i)$ converges for all such seminorms $p$.

Before stating the next lemma, let $Q(X)$ be the symmetric matrix $\frac{1}{2}((X_i,X_j))_{i,j=1,2}$, and let
\[
    \sigma(X) := e^{-2\pi \tr Q(X)}\sigma^0(X).
\]

\begin{lemma}\label{lem:nlc}
    For any lattice $\Lambda\subset V^2$, the series 
    $
        \sum_{X\in \Lambda} \sigma(X)
    $
    converges normally in $\Omega^\ast(\mathcal{H})$. In particular, for any symmetric matrix $T\in\Sym_2(\mathbb{R})$ the series 
    \begin{equation}\label{Tsc}
        \sum_{\substack{X\in\Lambda \\ Q(X)=T}} \sigma^0(X)
    \end{equation}
    converges normally in $\Omega^\ast(\mathcal{H})$.
\end{lemma}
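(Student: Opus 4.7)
The plan is to reduce this to the standard fact that, on compact subsets of $\mathcal{H}$, the exponential factor in $\sigma(X)$ is the Gaussian of a positive definite majorant quadratic form on $V^2$, which therefore dominates polynomial factors arising from the prefactor and from differentiation in $z$. The key observation is that the two contributions $e^{-2\pi\tr Q(X)}$ and $e^{-\pi r(X,z)^2}$ combine into a single Gaussian in the majorant.

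The first step is to establish the pointwise identity
\[
    2\pi\tr Q(X) + \pi r(X,z)^2 \;=\; \pi\bigl[(X_1,X_1)_z + (X_2,X_2)_z\bigr],
\]
where $(Y,Y)_z := (Y,Y) + u(Y,z)^2$ is the standard positive definite majorant of $(\cdot,\cdot)$ attached to the negative line $\mathbb{R}X(z)$. This is a short computation from $u(Y,z)=(Y,X(z))$, the orthogonal splitting $V=V_z^+\oplus V_z^-$, and $(X(z),X(z))=-2$. Once this is in hand, I would use smoothness and positive definiteness of $(\cdot,\cdot)_z$ in $z$ to conclude that, for any compact neighborhood $K\subset\mathcal{H}$, there is $c_K>0$ with $(Y,Y)_z\geq c_K\nm{Y}_0^2$ uniformly for $z\in K$ and $Y\in V$, where $\nm{\cdot}_0$ is a fixed auxiliary Euclidean norm on $V$.

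Next I would address the non-Gaussian part. The form $\sigma^0(X)$ is built from $u(X_i)$ and $du(X_i)$, whose coefficients in any local chart are polynomial in the entries of $X$ with real-analytic coefficient functions of $z$ (this is visible from the explicit formula \eqref{ufe}). Hence every $C^N$-seminorm of $\sigma(X)$ on $K$ is bounded by a constant times $(1+\nm{X}_0)^{d(N)} e^{-\pi c_K\nm{X}_0^2}$, and absorbing the polynomial into half of the Gaussian yields a uniform estimate
\[
    p(\sigma(X)) \;\leq\; C_{K,p}\, e^{-\pi c_K \nm{X}_0^2/2}
\]
valid for all $X\in V^2$. Since $\Lambda\subset V^2$ is a lattice, summing over $X\in\Lambda$ then reduces to a convergent Gaussian lattice sum, proving normal convergence of $\sum_{X\in\Lambda}\sigma(X)$. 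For the in particular clause, I would note that $e^{-2\pi\tr Q(X)}$ is the constant $e^{-2\pi\tr T}$ on the subset $\{X\in\Lambda:Q(X)=T\}$, so \eqref{Tsc} differs from the corresponding subseries of $\sum_X\sigma(X)$ by a global scalar, and convergence is inherited.

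The main technical obstacle I foresee is the bookkeeping for partial derivatives of $\sigma^0(X)$: one must verify that repeated differentiation in $z$ produces only finitely many additional polynomial factors in $X$ of controlled degree, so that the Gaussian continues to dominate uniformly in $X$. This is routine given the explicit polynomial structure of $u(X,z)$ in \eqref{ufe}, but it is the step requiring the most care in writing out.
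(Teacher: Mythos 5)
Your proposal is correct and follows essentially the same route as the paper: the identity $2\pi\tr Q(X)+\pi r(X,z)^2=2\pi\tr Q_z(X)$ with the Siegel majorant, a uniform positive definite lower bound on compacta, polynomial control of the prefactor and its derivatives, and comparison with a Gaussian lattice sum. The treatment of the ``in particular'' clause (the exponential is the constant $e^{-2\pi\tr T}$ on the fiber $Q(X)=T$) is also the intended one.
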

\begin{proof}
Let
    \[
        (X,Y)_z = (X,Y) +u(X,z)u(Y,z),\hspace{1cm} X,Y\in V.
    \]
    An easy computation shows that $(\;,\;)_z$ is the Siegel majorant associated to $X(z)\in \mathcal{D}$. For $X=(X_1,X_2)$ let $Q_z(X)$ denote the matrix $\frac{1}{2}((X_i,X_j)_z)_{i,j=1,2}$. Then we have $\tr Q_z(X) = \tr Q(X)+\frac{1}{2}r(X,z)^2$ and hence 
    \[
        \sigma(X,z) = \sigma_0(X)e^{ - 2\pi\tr Q_z(X)},
    \]
where
\begin{align*}
    \varphi_0(X)= du(X_1)\wedge du(X_2) \hspace{1cm}
    \psi_0(X)= \frac{1}{2}(u(X_1)du(X_2) - u(X_2)du(X_1)).
\end{align*}
In both cases, we see by differentiating \eqref{ufe} that $\sigma_0(X,z)$ has coefficients which are polynomial functions of $X$. Further, if $D:\Omega^\ast(\mathcal{H})\to C^\infty(\mathcal{H})$ is a differential operator which takes partial derivatives of one of the coefficients, then $D\sigma(X,z) = \sigma_{0,D}(X,z)e^{-2\pi\tr Q_z(X)}$ where $\sigma_{0,D}(X,z)$ is also polynomial in $X$.

Now let $K\subset \mathcal{H}$ be compact. We may find a positive definite quadratic form $q:V^2\to \mathbb{R}_{\geq0}$ such that $q \leq \tr Q_z$ for all $z\in K$. Then in addition, we may find a constant $C=C(q)>0$ such that $|\sigma_{0,D}(X,z)|\leq C e^{-\pi q(X)}$ for all $z\in K$. This yields
\[
    \sup_{z\in K} |D\sigma(X,z)| \leq C e^{ - \pi q(X)},
\]
and the lemma follows by comparison with the theta series of $q$.
\end{proof}

In view of Lemma \ref{lem:bpl}(c) and Lemma \ref{lem:nlc} we impose the following condition on $X$.

\begin{assumption}\label{ass:agx}
    $X_1$ and $X_2$ are linearly independent and the orbit $\Gamma X\subset V^2$ is contained in a lattice.
\end{assumption}

By Lemma \ref{lem:nlc}, we thus obtain.

\begin{corollary}\label{cor:coa}
    \eqref{asf} converges normally in $\Omega^\ast(\mathcal{H})$.
\end{corollary}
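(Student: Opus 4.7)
The plan is to deduce the corollary directly from Lemma \ref{lem:nlc}, using that the orbit lies in a lattice (by Assumption \ref{ass:agx}) and that the relevant Gaussian factor is constant on the orbit.

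First I would rewrite the sum as one indexed by the orbit. The map $\Gamma_X\backslash \Gamma \to \Gamma X$, $\gamma \mapsto \gamma^{-1}X$, is a bijection by the definition of $\Gamma_X$, so
\[
    \alpha(X,\sigma^0) = \sum_{Y\in \Gamma X} \sigma^0(Y).
\]
Next, since $\Gamma\subset G$ acts by isometries of $(V,Q)$, the quantity $\tr Q$ is constant on $\Gamma$-orbits in $V^2$; call this constant value $c = \tr Q(X)$. Therefore, by the definition of $\sigma$ given just before Lemma \ref{lem:nlc},
\[
    \sigma^0(Y) = e^{2\pi c} \sigma(Y) \qquad \text{for every } Y\in \Gamma X,
\]
so it suffices to show that $\sum_{Y\in\Gamma X}\sigma(Y)$ converges normally in $\Omega^\ast(\mathcal{H})$.

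Now I would invoke Assumption \ref{ass:agx}: there exists a lattice $\Lambda\subset V^2$ with $\Gamma X\subset \Lambda$. By Lemma \ref{lem:nlc}, the series $\sum_{Y\in \Lambda}\sigma(Y)$ converges normally, that is, $\sum_{Y\in\Lambda} p(\sigma(Y)) < \infty$ for every seminorm $p$ generating the topology on $\Omega^\ast(\mathcal{H})$. Restricting the index set to the subset $\Gamma X\subset \Lambda$ preserves this: for each such $p$,
\[
    \sum_{Y\in \Gamma X} p(\sigma(Y)) \;\leq\; \sum_{Y\in \Lambda} p(\sigma(Y)) \;<\; \infty,
\]
so $\sum_{Y\in \Gamma X}\sigma(Y)$ converges normally as well. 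Multiplying by the finite constant $e^{2\pi c}$ preserves normal convergence, giving the desired statement for $\alpha(X,\sigma^0)$.

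There is no real obstacle here, since Lemma \ref{lem:nlc} has already done all the analytic work of producing a Gaussian majorant for the coefficients and all their partial derivatives; the only content of the corollary beyond that lemma is the passage from a sum over a lattice to a sum over a $\Gamma$-orbit, which is handled by the invariance of $Q$ under $\Gamma$ together with the lattice containment in Assumption \ref{ass:agx}.
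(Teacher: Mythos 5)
Your argument is correct and is essentially the paper's own proof: the paper likewise observes that $Q(\gamma^{-1}X)=Q(X)$, so that the orbit sum is (via the bijection $\Gamma_X\backslash\Gamma\to\Gamma X$) a subsum of the normally convergent series from Lemma \ref{lem:nlc} over the lattice containing $\Gamma X$. The only cosmetic difference is that the paper quotes the fixed-$T$ series \eqref{Tsc} directly rather than peeling off the constant $e^{2\pi\tr Q(X)}$ by hand, but the content is identical.
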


\begin{proof}
    Since $Q(\gamma^{-1}X)=Q(X)$, this follows from the convergence of \eqref{Tsc}.
\end{proof}

It follows that $\alpha(X,\varphi^0)$ and  $\alpha(X,\psi^0)$ are smooth $\Gamma$-invariant forms on $\mathcal{H}$, and therefore we may identify them with smooth forms on $M$. In fact, if $\Gamma X$ is contained in the lattice $\Lambda$ then for any $a\in \GL_2(\mathbb{R})$ the orbit $\Gamma Xa$ is contained in $\Lambda a$ and hence $Xa$ also satisfies \ref{ass:agx}. In particular $\alpha(Xa,\sigma^0)$, for $\sigma^0\in \{\varphi^0,\psi^0\}$, also define smooth forms on $M$.

\begin{remark}
    The formulas for $\varphi^0(X)$ and $\psi^0(X)$ were obtained using the formalism developed by Mathai and Quillen \cite{MR836726}. Given an oriented vector bundle $\mathscr{E}$ or rank $r$ equipped with a metric and a compatible connection, they construct natural rapidly decreasing differential forms $U_{MQ}\in \Omega^r(\mathscr{E})$ and $V_{MQ}\in \Omega^{r-1}(\mathscr{E})$. Their main property is that $U_{MQ}$ is a Thom form for $\mathscr{E}$ and a transgression equation generalizing \eqref{fte}. 
    
    In more detail, let $\mathscr{V}$ be the trivial vector bundle on $\mathcal{D}$ with fiber $V$ and let $\mathscr{L}\subset \mathscr{V}$ be the tautological oriented line bundle for which the fiber at $z$ is the line in $V$ corresponding to $z$. The bundle $\mathscr{L}$ has a natural metric and compatible connection vaguely given by projecting $Q$ and $d$ from $\mathscr{V}$. The pair of vectors $X\in V^2$ also gives a section of $\mathscr{V}^{\oplus 2}$ and projects to a section $s_X\in \Gamma(\mathscr{L}^{\oplus 2})$. Then our forms $\varphi^0(X)$ and $\psi^0(X)$ are given by pulling back $U_{MQ}\in \Omega^2(\mathscr{L}^{\oplus 2})$ and $V_{MQ}\in \Omega^1(\mathscr{L}^{\oplus 2})$ by $s_X$.
    Note that we have a trivialization of $\mathscr{L}$ and $u(X)$ is just the coordinates of $s_{X}$ in this trivialization.
\end{remark}
\begin{remark}\label{rmk:kmr}
    By the previous remark and \cite{branchereau2023kudla} we have $2^{-1 /2}\varphi(X) = \varphi_{KM}(X)$, where $\varphi_{KM}$ denotes the Kudla--Millson Schwartz form; see e.g. \cite{MR842618}. See also \cite{MR3783423} for very similar results for Hermitian symmetric domains, which give a construction of the Kudla--Millson form in terms of superconnections via \cite{MR790678}.
\end{remark}


\subsubsection{The framed geodesics $c(X)$}

Let $c_X\subset \mathcal{H}$ be the zero locus of $u(X)$. The subset $c_X$ is closely related to the geodesic submanifolds $c_U$ from the previous subsection. This is the content of Lemma \ref{lem:gcl} below. The subset $c_X$ is also closely related to the two forms $\varphi^0(X)$ and $\psi^0(X)$, since when $c_X\neq\varnothing$ they have a Gaussian shape with peaks along $c_X$.

\begin{lemma}\label{lem:gcl}
If $\Span X$ is positive definite, then $c_X=c_{\Span X}$, and otherwise $c_X=\emptyset$.
\end{lemma}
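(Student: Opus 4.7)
The strategy is to reinterpret the defining equations $u(X_i,z)=0$ as orthogonality conditions in $V$ and then compare with the intrinsic definition of $c_U$ from \autoref{sec:tst}. The key algebraic input is the identity $u(X_i,z) = (X_i,X(z))$ from the start of \autoref{sec:fai}. Using this, $z\in c_X$ if and only if $(X_1,X(z))=(X_2,X(z))=0$, and by bilinearity this is equivalent to requiring $X(z)\perp \Span X$, i.e.\ $\Span X \subset X(z)^{\perp}$.

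The crucial structural fact I would then invoke is the signature constraint: since $X(z)\in \mathcal{D}$ spans a negative line in the signature $(3,1)$ quadratic space $V$, its orthogonal complement $X(z)^{\perp}\subset V$ is three-dimensional and positive definite. This observation immediately forces the dichotomy. If $\Span X$ is not positive definite then it cannot be embedded into any positive definite subspace of $V$; in particular, $\Span X \not\subset X(z)^{\perp}$ for any $z\in \mathcal{H}$, so $c_X = \varnothing$.

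For the complementary case, suppose $\Span X$ is positive definite and set $U=\Span X$. By the same reformulation, $z\in c_X$ iff $X(z)\perp U$, which (noting that $c_U$ in the sense of \autoref{sec:tst} was defined as a subset of $\mathcal{H}$ by exactly the condition $(Y,X(z))=0$ for all $Y\in U$) gives $c_X = c_U = c_{\Span X}$. So the only content beyond unwinding definitions is the elementary signature remark that a positive definite subspace of $V$ of dimension $\leq 3$ is precisely what can sit inside some $X(z)^{\perp}$, and there is no substantive obstacle.
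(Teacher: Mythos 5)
Your proof is correct and follows essentially the same route as the paper's: both unwind $u(X_i,z)=(X_i,X(z))$ to reduce the lemma to a signature statement about orthogonal complements in the $(3,1)$ space $V$. The paper phrases the key step as the nonexistence of a $2$-dimensional negative semidefinite subspace, while you phrase it as the positive definiteness of $X(z)^{\perp}$; these are the same observation.
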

\begin{proof}
When $\Span X$ is a positive definite subspace, then it is obvious from the definitions that $c_X=c_{\Span X}$.

Thus, we must show that $u(X)\neq0$ on all of $\mathcal{H}$ when $\Span X$ is not positive definite. Equivalently, we must show that $(\Span X)^\perp$ does not contain any negative vectors when $\Span X$ is not positive definite. However, if $\Span X$ contained a nonpositive vector and its orthogonal complement contained a negative vector, then they would span a $2$ dimensional negative semidefinite subspace of $V$. This is impossible by the signature condition on $V$.
\end{proof}

Let $C(X) = \Gamma_X\backslash c_X$ and let $c(X):C(X)\to M$ denote the canonical map. By Lemma \ref{lem:gcl} we have two cases
\begin{enumerate}
    \item[-] $c(X)$ is a geodesic in $M$.
    \item[-] $c(X)$ is the empty map.
\end{enumerate}
The first case occurs if and only if $Q(X)$ is positive definite, and in this case we have a canonical identification between $Nc(X)$ and the trivial bundle with fiber $\Span X$. In particular $X_1$ and $X_2$ give a canonical framing of $c(X)$. At the same time, as we have defined $c_X$ as the zero locus of $u(X)$, we also have a canonical framing provided by the differential $du(X):Nc_X\to \mathbb{R}^2$. This yields another framing of $Nc(X)$ provided by the isomorphism
\[
    (-,X):\Span X \cong \mathbb{R}^2.
\]
The two framings differ by the change of basis matrix $2Q(X)$, hence they are essentially equivalent.
\begin{lemma}\label{lem:pcx}
    The map $c(X)$ is proper.
\end{lemma}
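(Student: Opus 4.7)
The plan is to split into cases using Lemma \ref{lem:gcl}. If $\Span X$ fails to be positive definite then $c_X = \varnothing$, so $C(X) = \varnothing$ and $c(X)$ is vacuously proper. Hence we may assume $U := \Span X$ is positive definite of dimension $2$, so $c_X = c_U$ is a totally geodesic submanifold of $\mathcal{H}$. Let $K \subset M$ be compact, fix a compact lift $\tilde K \subset \mathcal{H}$ (so that $\Gamma \tilde K \supset \pi^{-1}(K)$), and take an arbitrary sequence $\{[z_n]\} \subset c(X)^{-1}(K)$ with representatives $z_n \in c_X$. By compactness of $K$ we may pick $\gamma_n \in \Gamma$ with $w_n := \gamma_n z_n \in \tilde K$, and after extracting a subsequence we have $w_n \to w \in \tilde K$. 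The $G$-equivariance $u(\gamma Y, \gamma z) = u(Y, z)$ gives $w_n \in c_{\gamma_n X}$.

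The heart of the argument is to show that $\{\gamma_n X\} \subset \Gamma X$ is finite. For this I would use the Siegel majorant $(Y,Y)_w = (Y,Y) + u(Y,w)^2$, which is a positive definite quadratic form on $V$ depending continuously on $w \in \mathcal{H}$. Since $\tilde K$ is compact, there exist a fixed positive definite reference form $\rho$ on $V$ and constants $0 < c_1 \leq c_2$ such that $c_1 \rho(Y) \leq (Y,Y)_w \leq c_2 \rho(Y)$ for every $Y \in V$ and $w \in \tilde K$. The vanishing $u(\gamma_n X_i, w_n) = 0$ then gives
\[
    c_1 \rho(\gamma_n X_i) \leq (\gamma_n X_i, \gamma_n X_i)_{w_n} = (\gamma_n X_i, \gamma_n X_i) = (X_i, X_i),
\]
so $\rho(\gamma_n X_i)$ is uniformly bounded in $n$ for $i = 1,2$. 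By Assumption \ref{ass:agx}, $\Gamma X$ is contained in a lattice of $V^2$, and a lattice meets any bounded set in finitely many points. Therefore $\{\gamma_n X\}$ is finite.

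Passing to a further subsequence, $\gamma_n X = Y$ is constant. Then $\delta_n := \gamma_1^{-1} \gamma_n$ fixes $X$, so $\delta_n \in \Gamma_X$, and
\[
    \delta_n z_n = \gamma_1^{-1} \gamma_n z_n = \gamma_1^{-1} w_n \longrightarrow \gamma_1^{-1} w
\]
in $\mathcal{H}$. Because $w_n \in c_Y$ for all large $n$ and $c_Y$ is closed, $w \in c_Y$, hence $\gamma_1^{-1} w \in c_{\gamma_1^{-1} Y} = c_X$. Since $[z_n] = [\delta_n z_n]$ in $\Gamma_X \backslash c_X$, this gives $[z_n] \to [\gamma_1^{-1} w]$ in $C(X)$, proving that $c(X)^{-1}(K)$ is sequentially compact, hence compact.

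The only nonroutine step is the Siegel majorant bound, and within that the only point requiring a little care is the uniform equivalence of the forms $(\cdot,\cdot)_w$ over the compact set $\tilde K$; all other steps are routine manipulations with the $\Gamma$-action and the definition of $c_X$.
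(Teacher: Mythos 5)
Your proof is correct and rests on exactly the same key estimate as the paper's: on the zero locus of $u(\gamma_n X)$ the Siegel majorant $(\cdot,\cdot)_{w}$ coincides with $(\cdot,\cdot)$, so a uniform positive-definite lower bound over the compact set traps the translates $\gamma_n X$ in a bounded subset of the lattice guaranteed by Assumption \ref{ass:agx}. The only difference is packaging: the paper first reduces to the case $\Gamma_X=1$ and phrases properness as the statement that only finitely many translates $c_{\gamma^{-1}X}$ meet a compact set, whereas you run a direct sequential-compactness argument keeping $\Gamma_X$ in play; both are fine.
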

\begin{proof}
There is only something to show if $Q(X)$ is positive definite and $\Gamma_X=1$. Then this boils down to showing that if $K\subset \mathcal{H}$ is compact, then $K\cap c_{\gamma^{-1}X}=\emptyset$ for all but finitely many $\gamma\in \Gamma$. Choose a positive definite quadratic form $q:V^2\to \mathbb{R}_{\geq0}$ such that $q\leq  \tr Q_z$ for $z\in K$, as in the proof of Lemma \ref{lem:nlc}. If $Y\in \Gamma X$ and $z\in K\cap c_Y$, then $\tr Q_z(Y) = \tr Q(Y) = \tr Q(X)$. Hence, if there were infinitely many $Y\in \Gamma X$ such that $K\cap c_Y\neq \emptyset$, then we would obtain infinitely many $Y\in \Gamma X$ with $q(Y) \leq \tr Q(X)$. This would contradict \ref{ass:agx}.
\end{proof}

\begin{remark}\label{rem:asa}
Let $\theta\in \mathbb{R}/2\pi\mathbb{Z}$ be the angle between the hyperbolic planes in $\mathcal{H}$ defined by $X_1$ and $X_2$. Evidently, $\theta$ is also the angle of intersection of the totally geodesic immersed surfaces defined by $X_1$ and $X_2$ along $c(X)$. Since $Q(X)$ coincides with the Gram matrix of $X_1$ and $X_2$ in the normal bundle of $c(X)$, we have the formula
\begin{equation}\label{iaf}
   \cos \theta = \frac{\frac{1}{2}(X_1,X_2)}{ \sqrt{Q(X_1)Q(X_2)}}
\end{equation}
\end{remark}


\subsection{The linking numbers $\iota(X;s)$}\label{sec:scl}

For a surface $s$ in $M$, our goal is to relate the limit 
\begin{equation}\label{lnd}
    \lim_{t\to \infty} \int_s \alpha(X t^{1 /2},\varphi^0)
\end{equation}
to the intersection of $s$ and $c(X)$. This leads to certain intersection numbers $\iota(X;s)$, which we now define.

One of the main properties of $\alpha(X,\varphi^0)$, following from Remark \ref{rmk:kmr} and the work of Kudla and Millson, is that it is a Poincare dual form of $c(X)$. In particular, when $s$ is closed $\int_s \alpha(Xt^{1/2},\varphi^0)$ is constant and equals the homological intersection pairing of $s$ and $c(X)$. The main difficulty therefore arises when $s$ has a nonempty boundary.

We first clarify what we mean by intersections of maps. Suppose that $a:A\to M$ is a smooth map from an oriented surface and $b:B\to M$ is an immersed curve with a fixed orientation of its normal bundle $Nb$. By an intersection point of $a$ and $b$ we mean a pair of points $x\in A$ and $t\in B$ such that $a(x)=b(t)$. We say that $a$ and $b$ are transverse at an intersection point $(x,t)$ if the map
\[
    da:T_x A\to N_{t} b
\]
is bijective, in which case we define the orientation number at $(x,t)$ to be $1$, resp. $-1$ if the map is orientation-preserving, resp. orientation reversing.

Now fix a smooth map $s:S\to M$ where $S$ is an oriented compact surface. Let $C=\partial S$ and let $c=\partial s$. We assume that $c$ is a finite union of hyperbolic geodesics $c_i:C_i\to M$ indexed by $i\in I$, that is, $c_i = c(U_i)$ where $U_i\subset V$ is a positive definite subspace of rank $2$. By Lemma \ref{lem:eog} below such a map $s$ exists, for a given $c$, if and only if $[c]\in H_1(M)$ is trivial. We abuse notation slightly and call $s$ a Seifert surface for $c$, even though $c$ may not be simple and $s$ may not even be immersed.

\begin{lemma}\label{lem:eog}
    Let $M$ be the interior of an oriented compact $3$-fold, and let $c:C\to M$ be an immersion, where $C$ is an oriented closed curve. Then there exists an oriented compact surface $S$ and a smooth map $s:S\to M$ with boundary $c$ if and only if the fundamental class $[c]\in H_1(M)$ is trivial.
\end{lemma}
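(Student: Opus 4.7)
The forward direction is a standard argument in singular homology. I would pick a smooth triangulation of $S$ compatible with its orientation, sum the top-dimensional simplices, and compose with $s$ to produce a singular $2$-chain $\sigma \in C_2(M)$. By compatibility of the induced orientations on the $1$-faces, the interior edges cancel in $\partial \sigma$, and what remains is precisely the singular $1$-cycle obtained by triangulating $c = s|_{\partial S}$. Hence $c_*[C]$ is a boundary in $C_*(M)$, giving $[c] = 0$ in $H_1(M;\mathbb{Z})$.

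For the reverse direction, the conceptually cleanest approach is to invoke the natural transformation $\Omega_1^{SO}(X) \to H_1(X;\mathbb{Z})$ from oriented bordism to singular homology, which is an isomorphism for any CW complex $X$; this follows e.g.\ from the Atiyah--Hirzebruch spectral sequence together with $\Omega_0^{SO}(\mathrm{pt}) = \mathbb{Z}$ and $\Omega_1^{SO}(\mathrm{pt}) = 0$. Applying this to $M$, the hypothesis $[c] = 0 \in H_1(M)$ directly yields a compact oriented surface $S$ with $\partial S = C$ as oriented manifolds and a continuous extension $s: S \to M$ of $c$. Smoothness is then achieved by a standard relative smoothing argument, rel $\partial S$, since $c$ is already a smooth immersion.

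If a more elementary construction is preferred, one can proceed via Hurewicz. Working one component of $M$ at a time, pick a basepoint $x_0$ and arcs $\alpha_i$ from $x_0$ to a chosen point on each $c_i$, and form the based loops $\gamma_i = \alpha_i \cdot c_i \cdot \alpha_i^{-1} \in \pi_1(M,x_0)$. The hypothesis $[c] = 0$ translates, via Hurewicz, to $\prod_i \gamma_i = \prod_{j=1}^{g} [u_j,v_j]$ in $\pi_1(M,x_0)$ for some $g$ and loops $u_j,v_j$. One then realizes the right-hand side by a map from a compact oriented genus-$g$ surface with one boundary component, uses pairs-of-pants to split this boundary into $n$ components freely homotopic to the $\gamma_i$, and collapses the $\alpha_i \cdot \alpha_i^{-1}$ arcs to obtain a surface whose boundary components are freely homotopic to the $c_i$. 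The only nontrivial point, in either approach, is promoting the boundary parametrization from a free homotopy to an actual equality with $c$; this is handled by attaching collar cylinders and reparametrizing, using that orientation-preserving diffeomorphisms of $S^1$ are isotopic to the identity.
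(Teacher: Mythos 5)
Your argument is correct, but it takes a genuinely different route from the paper. The paper reduces to the embedded case: it perturbs the immersion $c$ to an embedded link $c'$, invokes the generalized Seifert theorem for links in compact oriented $3$-folds (Kawauchi, Thm.\ 4.1.3) to get an \emph{embedded} surface bounding $c'$, and then perturbs back. Your approach instead runs through the isomorphism $\Omega_1^{SO}(M)\cong H_1(M;\mathbb{Z})$ (or, in the elementary variant, Hurewicz plus an explicit genus-$g$-with-boundary / pair-of-pants construction), followed by smoothing rel boundary and a collar to fix the boundary parametrization. The trade-offs: your argument is softer and more general --- it uses neither the $3$-dimensionality nor the compactness of $M$, and it does not need $c$ to be perturbable to an embedding --- whereas the paper's argument yields the stronger conclusion that $s$ can be taken embedded when $c$ is embedded (a fact the paper explicitly records, though it is not needed for the lemma as stated). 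Both directions of your proof are sound: the forward implication is the standard triangulation/cancellation argument, and the two points that could have been glossed over --- that the null-bordism only identifies $\partial S$ with $C$ up to diffeomorphism or free homotopy, and that the continuous extension must be smoothed without disturbing $c$ --- are both flagged and correctly resolved.
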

\begin{proof}
When $c$ is an embedding, then this follows from a generalization of Seifert's theorem which also states that $s$ may be chosen to be an embedding \cite[4.1.3]{MR1479639}. 

In the general case, we consider a small perturbation $c'$ of $c$ which is an embedding. Then there exists an embedded surface $s'$ in $M$ with boundary $c'$, and we can perturb $s'$ backwards to obtain a map $s$ with boundary $c$.
\end{proof}

Assume $Q(X)$ is positive definite. We let
\[
    I_1 : = \{i\in I: c_i \neq c(X)\}, \hspace{1cm} I_2 : = \{i\in I:c_i=c(X)\}.
\]
If $(x,t)$ is an intersection point of $s$ and $c(X)$ we shall call it an inner point of $s$ if $x\not\in C$, and otherwise we call it a boundary point. Further, we call a boundary intersection point $(x,t)$ bad if $x\in C_i$ for $i\in I_2$ and $x=t$, and otherwise we call it good.

\begin{definition}
    We define the following transversality conditions:
    \begin{enumerate}
        \item[($t^0$)] $s$ and $c(X)$ are transverse at every inner intersection point.
        \item[($t_i$)] $s$ and $c(X)$ are transverse at all good boundary intersection points $(x,t)$ with $x\in C_i$, and if $i\in I_2$ then $s$ is immersive on $C_i$.
        \item[($t$)] $s$ satisfies $(t^0)$ and ($t_i$) for all $i\in I$.
    \end{enumerate}
\end{definition}

Since distinct hyperbolic lines in $\mathcal{H}$ cannot have overlapping tangent lines, it follows that a boundary intersection point $(x,t)$ is good if and only if the map
\[
    T_x C\oplus T_t C(X)\to T_{c(x)}M
\]
is injective. In particular, $s$ and $c(X)$ can never be transverse at bad boundary intersection points, whereas for good boundary intersection points the obstruction to being transverse is that the image of non-zero normal vectors at $x$ under $ds$ should not be tangent to $c(X)$. 

\begin{lemma}\label{lem:hti}
    There exists a homotopy of $s$ relative to $c$ which makes $(t)$ hold.
\end{lemma}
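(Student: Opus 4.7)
The plan is to combine a Thom-type transversality argument in the interior of $S$ with a collar perturbation near each boundary component $C_i$, patched via a cut-off in the collar coordinate. Throughout, the homotopy will fix $s|_C = c$.

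First, fix a collar neighborhood $U \cong C \times [0,\epsilon)$ of $C$ in $S$. By the relative Thom transversality theorem (applied to maps $S \to M$ that agree with $s$ on a neighborhood of $C$), a small homotopy of $s$ supported in $S \setminus \overline{U}$ makes $s$ transverse to the immersion $c(X)$ at every inner intersection point. By Lemma \ref{lem:pcx} and compactness of $S$, the resulting inner intersection set is finite, establishing $(t^0)$.

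Next, work boundary component by boundary component on the collar. On $C_i \times [0,\epsilon)$, the map $s$ is determined to first order by $c_i = s|_{C_i}$ (fixed) and the inward normal derivative $\nu_i := ds(\partial_r)|_{r=0}$, a smooth section of $c_i^\ast TM$. At a boundary intersection point $(x,t)$ with $x \in C_i$, unpacking the definition, transversality is equivalent to $Tc_i + \mathbb{R}\,\nu_i(x) + Tc(X) = T_{c_i(x)}M$; when $(x,t)$ is good, $Tc_i$ and $Tc(X)$ span a $2$-plane and this reduces to the open dense condition $\nu_i(x) \notin Tc_i + Tc(X)$. For $i \in I_1$, the distinct geodesics $c_i$ and $c(X)$ meet in finitely many points of $c_i$, each such boundary intersection is automatically good, and the constraints on $\nu_i$ are imposed at only finitely many points. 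For $i \in I_2$, every $x \in C_i$ is a \emph{bad} boundary intersection point (taking $t = x$), which imposes no transversality constraint, but one needs (i) immersivity of $s$ along $C_i$, i.e., $\nu_i(x) \notin T_x c_i$ for every $x$, an open dense pointwise condition on the section $\nu_i$ of a rank-$3$ bundle over the compact $1$-manifold $C_i$; and (ii) transversality at the finitely many good boundary intersection points on $C_i$, which correspond to self-intersections of $c(X)$ lifted to $C_i$, forming a finite set by properness of $c(X)$.

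All of these are open dense conditions and the pointwise transversality requirements are at finitely many isolated points, so a small $C^\infty$-perturbation of each $\nu_i$ satisfies them simultaneously. The perturbation extends into the collar via a formula of the form $s_\sigma(x,r) := \exp_{s(x,r)}(\chi(r)\,\sigma(x))$ for a cut-off $\chi$ with $\chi(0) = 1$ and $\chi \equiv 0$ near $r = \epsilon$, where $\sigma$ is the perturbing section and $\exp$ is the Riemannian exponential in $M$; this is a homotopy relative to $C$ since $\chi(0) = 1$ controls $\nu_i$ without changing $s|_{C_i}$. Patching with the interior transversality perturbation yields the required homotopy of $s$ rel $c$ achieving $(t)$. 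The main obstacle is the case $i \in I_2$: the bad boundary locus fills all of $C_i$, so one must correctly identify that the substitute for boundary transversality there is immersivity of $s|_{C_i}$, together with transversality at the isolated good boundary intersection points arising from self-intersections of $c(X)$.
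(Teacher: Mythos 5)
Your overall strategy is the same as the paper's: Thom transversality in the interior, followed by a perturbation of the inward normal derivative $\nu_i$ in a collar to achieve immersivity of $s$ along $C_i$ for $i\in I_2$ and transversality at the good boundary intersection points. Your identification of what must be arranged on $C_i$ for $i\in I_2$ --- immersivity (i.e.\ $\nu_i(x)\notin T_xc_i$ everywhere) together with transversality at the finitely many good points coming from self-intersections of $c(X)$ --- is exactly right, as is the reformulation of boundary transversality as $\nu_i(x)\notin Tc_i+Tc(X)$ and the genericity counts.

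However, the collar perturbation as written is not a homotopy relative to $c$: with $\chi(0)=1$ you get $s_\sigma(x,0)=\exp_{c_i(x)}(\sigma(x))$, which moves the boundary off the geodesic wherever $\sigma(x)\neq0$. You have the roles of the value and the derivative of the cutoff reversed --- to modify $\nu_i$ while fixing $s|_C$, the perturbing field must \emph{vanish} on $C$ and have nonzero normal derivative there, e.g.\ replace $\chi(r)$ by $r\chi(r)$; this is precisely the role of the function $\rho$ with $\rho(x,0)=0$ and $\frac{\partial\rho}{\partial t}(x,0)\neq0$ in the paper's argument. A second, more minor issue is the ordering: supporting the interior perturbation in $S\setminus\overline{U}$ does not establish $(t^0)$ at inner intersection points lying in the open collar $C\times(0,\epsilon)$, and the subsequent collar perturbation can create new inner intersection points there. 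One should first achieve $(t^0)$ on the whole interior and only afterwards choose the collar small enough to avoid the now-isolated inner intersection points (which is how the paper proceeds), or else rerun the interior transversality step at the end.
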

\begin{proof}
By Thom's transversality \cite{guillemin2010differential} one can find a homotopy which makes $(t^0)$ hold. Now assuming $(t^0)$, let $A=C\times [0,1)\subset S$ be a collar neighborhood of $C$, such that $s(A)$ does not meet $c(X)$. After potentially shrinking $A$, we may factor $s$ as
\[
    A\xrightarrow{b} Nc \to M
\]
where $b$ is a fiber bundle map over $C$, and $Nc\to M$ is immersive. Let $\rho:A\to \mathbb{R}_{\geq0}$ be a smooth map such that $\frac{\partial \rho}{\partial t}(x,0)\neq0$ for $x\in C$ and $\rho(x,t)=0$ for $t>\frac{1}{2}$. Then by applying Thom's transversality to the partial derivative in $t$, we find a section $n$ of $Nc$ such that the map 
\[
    b_1:A\to Nc \hspace{1cm} (x,t)\mapsto b(x,t) + \rho(x,t)n(x).
\]
is immersive on $C$. Similarly, considering the section $tn$ for $t\in [0,1]$ we obtain maps $b_t:A\to Nc$ with $b_0=b$. We then consider the homotopy $s_t$ given by the composite $A\xrightarrow{b_t} Nc\to M$. The resulting map $s_1$ is then immersive on $c$ and satisfies $(t^0)$.

Now the last step is to find a homotopy such that $s$ and $c(X)$ are transverse at all good boundary intersection points. Since $s$ is immersive on $c$, this can be achieved by doing a small rotation around $c$ near any intersection point, such that the normal direction misses the tangent direction(s) of $c(X)$.
\end{proof}

The condition that $s$ is immersive on $C_i$ when $i\in I_2$, is needed to define certain winding numbers, as we now describe. By the assumption, the differential $ds$ induces an injective map of normal bundles
\[
    ds:N_{C_i /S} \to N c_i.
\]
The bundle $N_{C_i /S}$ has a canonical orientation, where positive vectors are given by inwards pointing tangent vectors. Any two positive sections are thus connected by a straight-line homotopy, and by applying $ds$ we obtain a canonical section of $Nc_i$ with no zeroes, up to homotopy. Using the framing of $c_i=c(X)$ we obtain a map $C_i\to \mathbb{R}^2$ with no zeroes, and hence a map
\begin{equation}\label{wnm}
    f_i:C_i\to S^1
\end{equation}
which is defined uniquely up to homotopy. The degree of $f_i$ measures how many times that $s$ winds around $c_i$ with respect to the framing of $c(X)$. 

\begin{definition}
    With notation as above: \begin{enumerate}
        \item[-] When $(t^0)$ holds, we let $\iota^0(X;s)$ be the sum of the orientation numbers of $c(X)$ and $s$ at all inner intersection points.
        \item[-] When $i\in I_1$ and $(t_i)$ holds, we let $\iota_i(X;s)$ be half the sum of the orientation numbers at all boundary intersection points on $c_i$.
        \item[-] When $i\in I_2$ and $(t_i)$ holds, we let $\iota_i(X;s)$ be the degree of $f_i$ plus half the sum of the orientation numbers at all good boundary intersection points on $c_i$.
    \end{enumerate}
    Finally, when $(t)$ holds, we let
    \[
        \iota(X;s) := \iota^0(X;s)+\sum_{i\in I} \iota_i(X;s).
    \]
\end{definition}

By the compactness of $S$ and Lemma \ref{lem:pcx}, there are only finitely many inner intersection points and good boundary intersection points.

When $Q(X)$ is not positive definite, we declare that $(t)$ always holds, and we let $\iota(X;s)=0$. Thus, we have defined $\iota(X;s)$ for all $X$ and all Seifert surfaces $s$ that satisfy $(t)$. In the next subsection, we show that the definition depends only on the homotopy class of $s$ relative to $c$.

\begin{proposition}\label{prp:ihi}
Let $c$ be a union of hyperbolic geodesics in $M$ with trivial fundamental class. Let $s_1$ and $s_2$ be Seifert surfaces for $c$ that satisfy $(t)$ and are homotopic relative to $c$. Then
\[
    \iota(X;s_1)=\iota(X;s_2).
\]
\end{proposition}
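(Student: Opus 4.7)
The case where $Q(X)$ is not positive definite is immediate since $\iota(X;s_1)=\iota(X;s_2)=0$ by definition. Assume from now on that $Q(X)$ is positive definite, so $c(X)\colon C(X)\to M$ is a proper geodesic with the canonical framing provided by $X_1,X_2$.

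My first reduction is to observe that for $i\in I_1$ the quantity $\iota_i(X;s)$ is in fact independent of $s$. Indeed, since $c_i\neq c(X)$ are distinct hyperbolic geodesics, their tangent lines never coincide at an intersection point, so every boundary intersection point on $c_i$ is automatically good, and its orientation number is determined purely by the oriented intersection of $c_i$ and $c(X)$ inside $M$. For $i\in I_2$, conversely, the tangent lines of $c_i=c(X)$ always agree, so no good boundary intersection points exist and $\iota_i(X;s)$ reduces to the winding number $\deg f_i^{\,s}$. Consequently, it suffices to establish
\[
\iota^0(X;s_1)+\sum_{i\in I_2}\deg f_i^{\,s_1}=\iota^0(X;s_2)+\sum_{i\in I_2}\deg f_i^{\,s_2}.
\]

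My plan is a standard transverse-cobordism argument. Take a smooth homotopy $H\colon S\times[0,1]\to M$ rel $C$ with $H_0=s_1$ and $H_1=s_2$. By Thom transversality applied to the interior and preserving the boundary condition, perturb $H$ (rel $S\times\{0,1\}$ and rel $C\times[0,1]$) so that $H|_{\operatorname{int}(S)\times(0,1)}$ is transverse to $c(X)$. Then $P:=H^{-1}(c(X))\cap(\operatorname{int}(S)\times[0,1])$ is a smooth oriented compact $1$-manifold whose boundary lies in $\operatorname{int}(S)\times\{0,1\}$ together with its limit points on $C\times[0,1]$. The oriented endpoints on the top and bottom faces compute $\iota^0(X;s_2)$ and $-\iota^0(X;s_1)$, so $\partial P=0$ reduces the problem to identifying the signed count of ends of $P$ along the side faces $C_i\times(0,1)$ with $\sum_{i\in I_2}\bigl(\deg f_i^{\,s_2}-\deg f_i^{\,s_1}\bigr)$.

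It remains to analyse the side faces. For $i\in I_1$, working in local coordinates $(u,v)$ on $S$ near a boundary intersection point $x\in C_i$ (with $v\geq 0$ and $C_i=\{v=0\}$) and adapted coordinates on $M$ in which $c_i$ and $c(X)$ appear as two transverse lines through a common point, condition $(t_i)$ forces $ds|_x\cdot\partial_v$ to be transverse to $c(X)$; the leading-order equations for $P$ then imply $v=0$, i.e.\ no strand of $P$ approaches this boundary intersection from the interior. For $i\in I_2$, pick tubular coordinates $(u,\xi_1,\xi_2)$ on a neighbourhood of $c_i$ in $M$ using the framing $X_1,X_2$ and collar coordinates $(u,v)$ on $S$, so that
\[
H(u,v,t)=\bigl(u+O(v),\,\xi_1(u,t)v+O(v^2),\,\xi_2(u,t)v+O(v^2)\bigr),
\]
and the Gauss map $f_i^{\,s_t}$ is, up to normalisation, the unit section of $\xi:=(\xi_1,\xi_2)$ on $C_i\times\{t\}$. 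The strands of $P$ for $v>0$ emanate from the isolated zeros of $\xi$ on $C_i\times(0,1)$ (arranged by a further generic perturbation), and their signed count at each such zero equals its local degree. A Poincar\'e--Hopf argument applied to the map $\xi\colon C_i\times[0,1]\to\mathbb{R}^2$, whose restrictions to the boundary components are non-vanishing and have respective winding numbers $\deg f_i^{\,s_1}$ and $\deg f_i^{\,s_2}$, then identifies the total signed count of ends on $C_i\times(0,1)$ with $\deg f_i^{\,s_2}-\deg f_i^{\,s_1}$ (up to a universal sign determined by orientation conventions), yielding the desired identity. The main obstacle I anticipate is the orientation book-keeping in this last step: one must carefully match the orientation of $P$ (induced from $c(X)$, $S\times[0,1]$, and $M$) with the orientations of $C_i\times[0,1]$ and of $\mathbb{R}^2$ via the framing, so that the $\deg f_i$ contributions cancel with the correct sign against the change in $\iota^0$.
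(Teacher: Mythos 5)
Your strategy is genuinely different from the paper's: there, this proposition is deduced from Proposition \ref{prp:bmt}, since $\alpha(Xt^{1/2},\varphi^0)$ is closed and hence $\lim_{t\to\infty}\int_s\alpha(Xt^{1/2},\varphi^0)$ is a homotopy invariant by Stokes' theorem, and this limit is shown to equal $\iota(X;s)$ whenever $(t)$ holds. You instead attempt a direct transverse-cobordism argument. Such a proof ought to be possible, but as written it has a genuine gap.

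The central problem is your first reduction. For $i\in I_1$ the quantity $\iota_i(X;s)$ is \emph{not} independent of $s$. The boundary intersection points $(x,t)$ with $x\in C_i$ are indeed determined by $c$ and $c(X)$ alone, but the orientation number at such a point is the sign of $\det(ds\colon T_xS\to N_tc(X))$, which involves the image under $ds$ of the inward normal direction of $C_i$ in $S$, not only $dc_i(T_xC_i)$; a homotopy rel $c$ fixes the latter but not the former. Concretely, in local coordinates where $c(X)$ is the $z$-axis, $c_i$ is the $x$-axis, and $s(u,v)=(u,\alpha(u,v),\beta(u,v))$ with $\alpha(u,0)=\beta(u,0)=0$, the orientation number is $\sgn\alpha_v(0,0)$, which can differ between $s_1$ and $s_2$ even though both satisfy $(t_i)$. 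When it does, an interior intersection point is created or destroyed at the boundary, so $\iota^0$ changes by $\mp1$ while $\iota_i$ changes by $\pm1$; only the sum is invariant. Correspondingly, your claim that no strand of $P$ approaches an $I_1$ boundary intersection point is false: transversality at such a point can fail at intermediate times of the homotopy (and must fail whenever the orientation numbers for $s_1$ and $s_2$ there disagree), and precisely then a strand of $P$ escapes through that side face. Your cobordism count therefore omits a nonzero contribution, and the displayed identity you reduce to is neither sufficient nor true in general. A secondary error: good boundary intersection points on $C_i$ with $i\in I_2$ do exist whenever $c(X)$ has self-intersections (these are the pairs $(x,t)$ with $x\neq t$, which the paper explicitly allows), so their $\tfrac{1}{2}$-weighted contributions cannot be discarded from the analysis of the $I_2$ side faces either.
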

Lemma \ref{lem:hti} and Proposition \ref{prp:ihi} imply the following corollary:
\begin{corollary}\label{cor:uei}
    There is a unique extension of $\iota(X;s)$ to all Seifert surfaces $s$ for $c$ which depends only on the homotopy class of $s$ relative to $c$.
\end{corollary}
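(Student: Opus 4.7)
The plan is to use Lemma \ref{lem:hti} to produce a representative satisfying the transversality condition $(t)$ and then invoke Proposition \ref{prp:ihi} to show the resulting value is independent of choices. Concretely, given any Seifert surface $s$ for $c$, I would first apply Lemma \ref{lem:hti} to find a homotopy $s'$ of $s$ relative to $c$ such that $(t)$ holds, and I would tentatively set $\iota(X;s) := \iota(X;s')$.

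The first thing to check is well-definedness of this rule. If $s'_1$ and $s'_2$ are two such choices, both are homotopic to $s$ relative to $c$, hence homotopic to each other relative to $c$ via concatenation of homotopies. Both satisfy $(t)$, so Proposition \ref{prp:ihi} applies and gives $\iota(X;s'_1) = \iota(X;s'_2)$. This shows the extension is a well-defined function on all Seifert surfaces. To see that the extension is homotopy invariant, suppose $s_1$ and $s_2$ are Seifert surfaces homotopic relative to $c$; pick transverse representatives $s'_1$ and $s'_2$ using Lemma \ref{lem:hti}. Then $s'_1$ and $s'_2$ are homotopic relative to $c$ (chain the homotopies $s'_1 \leadsto s_1 \leadsto s_2 \leadsto s'_2$), so Proposition \ref{prp:ihi} again gives $\iota(X;s'_1) = \iota(X;s'_2)$, which by definition equals $\iota(X;s_1) = \iota(X;s_2)$.

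For uniqueness, suppose $\widetilde\iota$ is any extension of $\iota(X;\cdot)$ from $(t)$-transverse Seifert surfaces to all Seifert surfaces that depends only on the relative homotopy class. Then for an arbitrary Seifert surface $s$ with transverse representative $s'$, by homotopy invariance of $\widetilde\iota$ we must have $\widetilde\iota(X;s) = \widetilde\iota(X;s') = \iota(X;s')$, which is precisely our definition. Hence the extension is unique.

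The argument is essentially formal once one has Lemma \ref{lem:hti} and Proposition \ref{prp:ihi} in hand; there is no real obstacle. The only small subtlety is to make sure that homotopies \emph{relative to $c$} compose properly, which is immediate since the boundary condition is preserved throughout each individual homotopy. The substantive content lies in the two preceding results; this corollary merely packages them.
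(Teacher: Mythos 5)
Your argument is correct and is exactly the one the paper intends: the paper states the corollary as an immediate consequence of Lemma \ref{lem:hti} and Proposition \ref{prp:ihi}, and your write-up simply spells out the standard well-definedness, homotopy-invariance, and uniqueness checks. No issues.
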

We record the following lemma, which will be used in Section \ref{sec:mgs}.
\begin{lemma}\label{lem:xai}
    For all $a\in \GL_2^+(\mathbb{R})$, we have $\iota(Xa;s)=\iota(X;s)$.
\end{lemma}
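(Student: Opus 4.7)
The plan is to trace through the definition of $\iota(X;s)$ and verify that each ingredient is invariant (or only modified up to homotopy) under the replacement $X \mapsto Xa$ with $a\in\GL_2^+(\mathbb{R})$.

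First, I would handle the degenerate case. If $Q(X)$ is not positive definite then $Q(Xa)=\tp{a}Q(X)a$ has the same signature, so by convention $\iota(X;s)=\iota(Xa;s)=0$. Assume therefore that $Q(X)$ is positive definite. Since $a$ is invertible, $\Span(Xa)=\Span X$, and hence Lemma \ref{lem:gcl} gives $c_{Xa}=c_X$. The group $\Gamma_X$ depends only on $\Span X$ (as the pointwise stabilizer), so $\Gamma_X=\Gamma_{Xa}$ and $c(Xa)=c(X)$ as immersed oriented geodesics. In particular, the sets $I_1, I_2$, the notion of inner / good boundary / bad boundary intersection points, and the conditions $(t^0)$ and $(t_i)$ are all unaffected by the change $X\mapsto Xa$.

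Second, I would check orientation numbers. The orientation of $Nc(X)$ comes from the orientation of $\Span X$ defined by the ordered basis $(X_1,X_2)$. Replacing this basis by $(X_1,X_2)a$ multiplies the volume form by $\det a>0$, so the orientation of $Nc(X)$ (and hence the induced orientation of $c(X)$) is unchanged. Consequently the orientation number at each inner or good boundary intersection point is the same for $X$ and for $Xa$. This immediately yields $\iota^0(Xa;s)=\iota^0(X;s)$ and $\iota_i(Xa;s)=\iota_i(X;s)$ for every $i\in I_1$.

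Third comes the main point: the winding contributions $\iota_i$ for $i\in I_2$. Recall that $\iota_i(X;s)$ includes the degree of a map $f_i^X:C_i\to S^1$ obtained from an inward-pointing section $n$ of $N_{C_i/S}$ by first applying $ds$ to land in $Nc_i\cong \Span X$ and then composing with the trivialization $\Span X\xrightarrow{\sim}\mathbb{R}^2$ determined by the framing $(X_1,X_2)$. Replacing the framing by $(X_1,X_2)a$ changes this trivialization by the linear automorphism $a^{-1}\in \GL_2^+(\mathbb{R})$, so $f_i^{Xa}$ differs from $f_i^X$ by post-composition with the map $\mathbb{R}^2\setminus\{0\}\to S^1$, $w\mapsto a^{-1}w/\nm{a^{-1}w}$. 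The key observation is that $\GL_2^+(\mathbb{R})$ is path-connected; choosing a continuous path $a_t$ from $I$ to $a^{-1}$ in $\GL_2^+(\mathbb{R})$ gives a homotopy between $f_i^X$ and $f_i^{Xa}$ through maps into $S^1$, so the degrees coincide. The good boundary intersection contributions are handled as in the second step, and combining all terms gives $\iota(Xa;s)=\iota(X;s)$.

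The only mildly delicate point is the winding number step; everything else is bookkeeping about how framings and orientations on $\Span X$ behave under a change of basis with positive determinant.
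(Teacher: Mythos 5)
Your proposal is correct and follows essentially the same route as the paper: observe that $c(Xa)=c(X)$ with the same orientation since $\det a>0$, so the inner and good boundary contributions agree, and that the winding maps \eqref{wnm} for $X$ and $Xa$ are homotopic (which you justify, correctly, via path-connectedness of $\GL_2^+(\mathbb{R})$), so the degrees agree. The paper's proof is just a terser version of the same argument.
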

\begin{proof}
We may assume that $(t)$ holds. The geodesics $c(Xa)$ and $c(X)$ differ only in their framing, which differ by the transformation $a$. In particular, since $\det a>0$ they have the same orientation, so $\iota^0(Xa;s)=\iota(X;s)$ and $\iota_i(Xa;s)=\iota_i(X;s)$ for $i\in I_1$, and the two maps \eqref{wnm} are homotopic, so $\iota_i(Xa;s)=\iota_i(X;s)$ for $i\in I_2$.
\end{proof}


\subsection{The main identity}

As in the previous section, we assume that $X$ satisfies Assumption \ref{ass:agx} and that $s$ is a Seifert surface for a union of hyperbolic geodesics. We now prove Proposition \ref{prp:ihi}, together with the main result of this section.

\begin{proposition}\label{prp:bmt}
    The limit in \eqref{lnd} exists and
    \begin{equation}\label{fsmi}
        \lim_{t\to \infty} \int_s \alpha(X t^{1 /2},\varphi^0)=\iota(X;s).
    \end{equation}
\end{proposition}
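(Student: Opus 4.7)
My plan is to combine a localization argument with explicit Gaussian Thom form computations. The form $\varphi^0(Xt^{1/2}) = t \, du(X_1) \wedge du(X_2) \, e^{-\pi t r(X)^2}$ has Gaussian shape concentrated on $c_X$, and $\alpha(Xt^{1/2},\varphi^0)$ should converge in the sense of currents to integration over $c(X)$ endowed with the framing $(X_1, X_2)$. I first dispose of the degenerate case: when $Q(X)$ is not positive definite, Lemma \ref{lem:gcl} gives $c_X = \emptyset$, so for every $\gamma$ the function $r(\gamma^{-1}X, \cdot)$ is strictly positive on $\mathcal{H}$, and in fact $\min_{z \in s(S)} r(\gamma^{-1}X,z) \to \infty$ as $\gamma \to \infty$. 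The uniform exponential decay, combined with the domination argument in Lemma \ref{lem:nlc}, yields $\int_s \alpha(Xt^{1/2},\varphi^0) \to 0$ by dominated convergence, matching $\iota(X;s) = 0$.

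In the positive definite case I first reduce to the situation where $s$ satisfies the transversality condition $(t)$, via Lemma \ref{lem:hti} and Proposition \ref{prp:ihi} (proved at the end). By the same Gaussian decay, the integral localizes in the $t \to \infty$ limit to arbitrarily small neighborhoods of $s\cap c(X)$, so I cover this locus with tubes of three types: $U_\beta$ around each inner transverse intersection point, $V_\beta$ around each good boundary intersection point, and $W_i$ around each boundary component $c_i$ with $i \in I_2$. Away from these tubes, $r(X,\cdot)$ is uniformly positive on the remainder of $s$, so the integral over this region vanishes in the limit.

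The heart of the argument is an explicit local computation in each tube. At an inner transverse intersection point, Fermi-type coordinates on $M$ along $c(X)$ in which $u(X_1), u(X_2)$ pull back to a linear change of normal coordinates identify $\varphi^0(Xt^{1/2})$ with the standard Gaussian Thom form on $\mathbb{R}^2$, whose integral over a transverse patch of $s$ converges to the orientation number $\pm 1$. At a good boundary intersection point with $i \in I_1$, the surface $s$ is locally a half-disk whose boundary passes transversely through $c(X)$, so the same computation performed on a half-disk rather than a full disk yields $\pm\frac{1}{2}$. The most delicate case is $i\in I_2$, where $c_i$ coincides with $c(X)$ and lies in $\partial s$: using the collar $A = C_i \times [0,\epsilon) \hookrightarrow S$ together with the immersivity hypothesis, the inward normal of $s$ along $c_i$ defines the map $f_i:C_i\to S^1$ of \eqref{wnm}, and a change of variables in the Gaussian integral over $A$ shows that in the limit each full turn of $f_i$ contributes $\pm 1$, for a total of $\deg f_i$.

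Finally I prove Proposition \ref{prp:ihi}: any two Seifert surfaces $s_1, s_2$ that are homotopic rel $c$ cobound an oriented $3$-chain in $M$, and since $\varphi^0$ is closed by Lemma \ref{lem:bpl}(d) so is $\alpha(Xt^{1/2},\varphi^0)$; Stokes' theorem gives $\int_{s_1}\alpha = \int_{s_2}\alpha$ for every $t$, and passing to $t\to\infty$ yields $\iota(X;s_1) = \iota(X;s_2)$ once condition $(t)$ holds for both $s_i$. The main obstacle will be the $i\in I_2$ case, specifically the careful bookkeeping needed to match the limiting Gaussian integral to $\deg f_i$ with the correct orientation signs arising from the framing of $c(X)$, the inward orientation of $N_{C_i/S}$, and the orientation of $c_i$.
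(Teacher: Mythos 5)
Your proposal follows essentially the same route as the paper: reduce to the transversality condition $(t)$ using that $\alpha(Xt^{1/2},\varphi^0)$ is closed (whence homotopy invariance of the left-hand side and Proposition \ref{prp:ihi}), localize the Gaussian to tubes around the intersection locus, and evaluate the disk, half-disk, and collar contributions to obtain $\pm1$, $\pm\tfrac{1}{2}$, and $\deg f_i$ respectively, with the non-positive-definite case killed by uniform decay and domination. The only structural step you leave implicit is the unfolding of the $\Gamma_X\backslash\Gamma$-average to a single integral of $\varphi^0(Xt^{1/2})$ over the fiber product $p^\ast S$ (Lemma \ref{lem:ufl2}), which is how the paper makes both the localization and the dominated-convergence estimates precise.
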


This boils down to proving \eqref{fsmi} when condition $(t)$ holds. Indeed, as $\alpha(Xt^{1/2},\varphi^0)$ is closed by \ref{lem:bpl}(d), the left-hand side of \eqref{fsmi} depends only on the homotopy class of $s$ relative to $c$. Therefore, if we prove \eqref{fsmi} when $(t)$ holds, then this proves Proposition \ref{prp:ihi}, and it follows that \eqref{fsmi} holds for any $s$ by homotoping to a Seifert surface that satisfies $(t)$.

To prove \eqref{fsmi} when $(t)$ holds, the rough idea is to analyse the limit integral on small neighborhoods around each intersection point, and show that inner intersection points contribute $1$ to the limit and that good boundary intersection points contribute $\frac{1}{2}$. As one might expect, the most difficult part lies in extracting the winding number contribution from the bad boundary intersection points, since these are not discrete. 
\begin{remark}
    In fact, $\iota(X;s)$ depends only on the homology class of $s$ relative to $c$. 
    More precisely, consider the exact sequence
    \[
        0\to H_2(M)\to H_2(M,c)\to H_1(C),
    \]
    where $H_2(M,c)$ denotes relative homology of the map $c$. A Seifert surface $s$ for $c$ determines a preimage $[s]\in H_2(M,c)$ of the fundamental class $[c]\in H_1(C)$. For two different Seifert surfaces $s_1$ and $s_2$, we have $[s_1]-[s_2]=[s_3]$ for a closed surface $s_3\subset M$, by the exactness. As the forms $\alpha(Xt^{1/2},\varphi^0)$ are closed, this implies
    \[
        \iota(X;s_1) - \iota(X;s_2) = \iota(X;s_3).
    \]
    In particular, if $[s_3]=0$ then $\iota(X;s_3)=0$ and hence $\iota(X;s_1) = \iota(X;s_2)$.
\end{remark}

\begin{remark}\label{rem:sgl}
    Our proofs also work if some boundary components of $s$ are not geodesics but instead are required to be disjoint with $c(X)$. When $M$ is the complement of a link $\ell$ in a closed $3$-fold $M'$ and $c(X)$ is closed, this allows the definition of $\iota(X;s)$ to be extended to any Seifert surface $s$ in $M'$ for which $\partial s$ is a union of hyperbolic geodesics in $M$. Specifically, let $N$ be a tubular neighborhood of $\ell$ which does not intersect $c(X)$. We may homotope $s$, inside $N$, such that it is transverse to $\ell$ and remove a disk around every intersection point with $\ell$, obtaining a Seifert surface $s'$ in $M$. One can show that the resulting intersection number $\iota(X;s')$ depends only on the class $[s]\in H_2(M';c)$.
\end{remark}

\subsubsection{Unfolding}
We found it simpler to analyze the different contributions by unfolding the integral of $\alpha(Xt^{1/2})$ over $s$ to an integral of $\varphi^0(Xt^{1/2})$. 

Let $p_0:\mathcal{H}\to M$ and $p:\Gamma_X\backslash \mathcal{H}\to M$ denote the canonical covering maps. For a map $a:A\to M$ let $p^\ast A =A\times_M (\Gamma_X \backslash \mathcal{H})$ denote the fibre product and let $p^\ast a:\pi^\ast A\to \Gamma_X\backslash\mathcal{H}$ denote the canonical map. If $A$ is a manifold with boundary $B=\partial A$, and $b=\partial a$, then $p^\ast A$ is a manifold with boundary $p^\ast B$ and we have $\partial (p^\ast a) = p^\ast b$. Replacing $\Gamma_X\backslash \mathcal{H}$ with $\mathcal{H}$ we use the analogous notation for $p_0$. We have covering maps $p_0^\ast A\to A$ and $p_0^\ast A\to p^\ast A$ with Galois groups $\Gamma_X$ and $\Gamma$.

Since $u(X)$ is $\Gamma_X$-invariant it defines a function $\Gamma_X\backslash \mathcal{H}\to \mathbb{R}^2$. We denote the zero set of this function by $\tilde c_X\subset \Gamma_X\backslash \mathcal{H}$. Note that $\tilde c_X$ can be identified with $C(X)$ under the obvious map. Moreover the intersection points of $s$ and $c(X)$ naturally form a subset of (the non-compact surface) $p^\ast S$ which coincides with $\tilde s^{-1}(\tilde c_X)$, where $\tilde s:=p^\ast s$. 

Noting that $\varphi^0(Xt^{1 /2})$ is $\Gamma_X$-invariant, we identify it with a form on $\Gamma_X\backslash \mathcal{H}$. 

\begin{lemma}\label{lem:ufl2} The form $\tilde s^\ast\varphi^0(Xt^{1 /2})$  is integrable on $p^\ast S$ and 
    \[
        \int_{S} s^\ast \alpha(Xt^{1 /2},\varphi^0) = \int_{p^\ast S} \tilde s^\ast\varphi^0(Xt^{1 /2}).
    \]
    In addition, if $A\subset p^\ast S$ is a closed subset which does not contain any intersection points of $s$ and $c(X)$, then
\begin{equation}
    \lim_{t\to\infty}\int_A \tilde s^\ast \varphi^0(X t^{1 /2}) = 0.
\end{equation} 
\end{lemma}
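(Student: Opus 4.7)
The plan is to prove the two assertions in turn, with the unfolding identity in the first part providing integrability data needed for the second.

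For the unfolding identity, I will expand $\alpha(Xt^{1/2},\varphi^0)$ as the Poincaré series $\sum_{\gamma\in\Gamma_X\backslash\Gamma}\varphi^0(\gamma^{-1}Xt^{1/2})$. The proof of Lemma \ref{lem:nlc} establishes normal convergence by Gaussian majoration, and the same Gaussian bound gives convergence of the sum of absolute values, which justifies Fubini. Each summand $\int_S s^\ast\varphi^0(\gamma^{-1}Xt^{1/2})$ is then identified with the integral of $\tilde s^\ast\varphi^0(Xt^{1/2})$ over the $\gamma$-sheet of the covering $p^\ast S\to S$, using Lemma \ref{lem:bpl}(a) to rewrite $\varphi^0(\gamma^{-1}X)=\gamma^\ast\varphi^0(X)$ and a local section of the sheet as a lift of $s$ to $\mathcal{H}$. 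Summing over $\gamma$ yields
\[
    \int_S s^\ast\alpha(Xt^{1/2},\varphi^0) = \int_{p^\ast S}\tilde s^\ast\varphi^0(Xt^{1/2}),
\]
with absolute integrability of the right-hand side.

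For the vanishing statement, the key step is to show that $r(X)\circ\tilde s$ admits a positive uniform lower bound $r_0$ on $A$. I will argue by contradiction: given a sequence $(\sigma_n,z_n)\in A$ with $r(X)(z_n)\to 0$, compactness of $S$ yields $\sigma_n\to\sigma_\infty$, and each $z_n$ lies within a vanishing distance of some $\tilde t_n\in\tilde c_X$. Continuity of $p$ then gives $c(X)(\tilde t_n)=p(\tilde t_n)\to s(\sigma_\infty)$, and properness of $c(X)$ (Lemma \ref{lem:pcx}) extracts a convergent subsequence $\tilde t_n\to\tilde t_\infty\in\tilde c_X$, forcing $z_n\to\tilde t_\infty$ and $(\sigma_\infty,\tilde t_\infty)\in A$ by closedness. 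But this limit is an intersection point of $s$ and $c(X)$, contradicting the hypothesis on $A$.

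With the lower bound in hand, the conclusion follows from the scaling identity
\[
    \varphi^0(Xt^{1/2}) = t\cdot\varphi^0(X)\cdot e^{-\pi(t-1)r(X)^2},
\]
which on $A$ gives $|\tilde s^\ast\varphi^0(Xt^{1/2})|\le te^{-\pi(t-1)r_0^2}|\tilde s^\ast\varphi^0(X)|$ for $t\ge 1$. The first part provides integrability of $\tilde s^\ast\varphi^0(X)$, so
\[
    \left|\int_A\tilde s^\ast\varphi^0(Xt^{1/2})\right| \le te^{-\pi(t-1)r_0^2}\int_{p^\ast S}|\tilde s^\ast\varphi^0(X)| \xrightarrow{t\to\infty} 0.
\]
The main obstacle is the uniform lower bound: $A$ is only assumed closed in the typically non-compact space $p^\ast S$, and the properness of $c(X)$ is precisely the ingredient that rules out accumulation of intersection points at infinity in $p^\ast S$.
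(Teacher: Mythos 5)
Your first part is essentially the paper's unfolding argument (the paper localizes with a partition of unity since $S$ need not be simply connected, but the content is the same), and your reduction of the second part to a uniform lower bound $r(X)\circ\tilde s\geq r_0>0$ on $A$, followed by the scaling identity $\varphi^0(Xt^{1/2})=t\,\varphi^0(X)e^{-\pi(t-1)r(X)^2}$ and the absolute integrability supplied by the first part, is a clean strategy. The gap is in how you obtain $r_0$. The sentence ``each $z_n$ lies within a vanishing distance of some $\tilde t_n\in\tilde c_X$'' carries the entire weight of the lemma and is not justified; worse, it is not available in all the cases the lemma must cover. When $\Span X$ is positive definite the implication $r(X)(z_n)\to0\Rightarrow d(z_n,\tilde c_X)\to0$ is true, but it needs an argument: one must know that the sublevel sets $\{r(X)\le\delta\}$ are tubes around $c_X$ whose radii shrink to $0$ with $\delta$, which follows from $r(X,z)^2=2(\tr Q_z(X)-\tr Q(X))$ and the properness of $z\mapsto\tr Q_z(X)$ modulo the stabilizer of $\Span X$. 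But the lemma is also needed --- and is applied in the proof of Proposition \ref{prp:bmt} with $A=p^\ast S$ --- when $\Span X$ is \emph{not} positive definite, where $c_X=\tilde c_X=\varnothing$. There your contradiction argument has nothing to converge to, and properness of the empty map $c(X)$ gives no information. In the degenerate case, where $\Span X$ contains an isotropic vector $e$, one genuinely has $\inf_{\mathcal H}r(X)=0$, approached at the cusp determined by $e$, so the uniform lower bound on $A$ is not a formal consequence of $A$ being closed: it holds only because the orbit $\Gamma X$ is contained in a lattice (Assumption \ref{ass:agx}) --- equivalently, because the compact set $s(S)$ stays away from the cusps --- and your argument never invokes this.

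The paper avoids asserting a single global $r_0$: it localizes over a compact coordinate patch $\bar N$, writes the integral as a sum over $\gamma\in\Gamma_X\backslash\Gamma$, obtains a positive lower bound for $r(\gamma^{-1}X,\bar s(\cdot))$ on the compact set $\bar N\cap\gamma^{-1}\bar A$ separately for each $\gamma$, and controls the tail of the sum by a theta series via $r(\gamma^{-1}X)^2\ge 2(q(\gamma^{-1}X)-\tr Q(X))$ and the discreteness of $\Gamma X$; dominated convergence then gives the vanishing. Your approach can be repaired by proving the uniform lower bound with the same two ingredients (compactness for the finitely many $\gamma$ with $q(\gamma^{-1}X)$ small, lattice discreteness for the rest), but as written the key step is asserted rather than proved and fails verbatim outside the positive definite case.
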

\begin{proof}
Let $\bar s=p_0^\ast s$. If $N\subset S$ is a simply connected compact coordinate neighborhood, then $p_0^\ast N = \bigcup_{\gamma\in \Gamma}\gamma \Bar{N}$ for some connected component $\Bar{N} \subset p_0^\ast N$. Let $\rho:S\to \mathbb{R}_{\geq0}$ be a smooth function supported on such a neighborhood $N$, and let $\tilde\rho$ and $\bar \rho$ denote the composite maps $p^\ast S\to S\xrightarrow{\rho} \mathbb{R}_{\geq0}$ and $p_0^\ast S\to S\xrightarrow{\rho} \mathbb{R}_{\geq0}$, respectively. 

Let $\Phi_{\infty}\subset \Gamma$ be a set of coset representatives for $\Gamma_X \backslash \Gamma$, and let $D= \bigcup_{\gamma\in \Phi_\infty} \gamma\bar N$. We claim that
\begin{equation}\label{uf1}
    \int_S \rho s^\ast \alpha(Xt^{1 /2},\varphi^0) = \int_{D} \bar{\rho}\bar{s}^\ast \varphi^0(Xt^{1 /2})
\end{equation}
and
\begin{equation}\label{uf2}
    \lim_{t\to \infty} \int_{\bar{A}} \bar{\rho}\bar{s}^\ast \varphi^0(Xt^{1 /2}) = 0
\end{equation}
where $\bar A\subset D$ is any closed subset such that $\bar s(\bar A)$ does not meet $c_X$. 

Since $D$ maps diffeomorphically onto $p^\ast N$, it follows that
\[
    \int_{D} \bar{\rho}\bar{s}^\ast \varphi^0(Xt^{1 /2}) = \int_{p^\ast S} \tilde{\rho}\tilde{s}^\ast \varphi^0(Xt^{1 /2}),
\]
and if picking $\bar A$ to be the preimage in $D$ of $A$ we also have
\[
    \int_{\bar{A}} \bar{\rho}\bar{s}^\ast \varphi^0(Xt^{1 /2}) = \int_{A} \tilde{\rho}\tilde{s}^\ast \varphi^0(Xt^{1 /2}).
\]
Hence the lemma now follows by a partition of unity argument.

Let $x=(x_1,x_2)$ be coordinates on $\overline N$. Then
\begin{equation}\label{pff}
    \tilde s^\ast \varphi^0(X) = e^{ - \pi r(X,\tilde s(x))^2}f(X,x) dx_1\wedge dx_2,
\end{equation}
where $f(X,x)$ is a homogenous quadratic polynomial in $X$ and smooth in $x$.

For any finite subset $\Phi\subset \Phi_\infty$ we have
\begin{equation}\label{puf}
    \int_{\widetilde N} \bar \rho\bar s^\ast \sum_{\gamma\in \Phi}\varphi^0(\gamma^{ - 1 }X t^{1 /2}) = \sum_{\gamma\in\Phi}\int_{\gamma\widetilde{N}}\bar \rho\bar s^\ast \varphi^0(Xt^{ 1 /2}).
\end{equation}
The same holds if replacing $f(X,x)$ by $|f(X,x)|$. Now considering a sequence of finite subsets $\Phi_1\subset \Phi_2\subset\cdots$ with union $\Phi_\infty$, then it follows that $\bar\rho\bar s^\ast\varphi^0(X)$ is integrable from the same proof as for Corollary \ref{cor:coa}, and we deduce \eqref{uf1} since $\sum_{\gamma\in \Phi_n}\varphi^0(\gamma^{-1}Xt^{1 /2})$ converges to $\alpha(Xt^{ 1/ 2},\varphi^0)$ as $n\to\infty$ and 
\begin{equation}
    \int_{N} \rho s^\ast \alpha(Xt^{1 /2},\varphi^0) = \int_{\overline N} \bar \rho s^\ast \alpha(Xt^{1 /2},\varphi^0).
\end{equation}
In view of \eqref{pff} and \eqref{puf}, to show \eqref{uf2} it suffices to prove that
\begin{equation}\label{dcl}
    \lim_{t\to\infty}\sum_{\gamma\in \Phi_\infty} \sup_{x\in \overline N\cap \gamma^{ - 1}A} ( e^{ - \pi t r(\gamma^{ - 1}X,\bar s(x))^2}|f(\gamma^{ - 1}X t^{1 /2},x)|) =0.
\end{equation}
To this end, we will apply dominated convergence.

For any $\gamma$ we have $r(\gamma^{-1}Xt^{1 /2}\bar s(x))>0$ for $x\in \overline N\cap \gamma^{-1}A$ by assumption on $A$, so by compactness we can find a lower bound $r({\gamma})>0$. It follows that there exists a constant $C(\gamma)>0$ such that
\[
    e^{ - \pi t r(\gamma^{ - 1}X,\bar s(x))^2}|f(\gamma^{ - 1}X t^{1 /2},x)| < C(\gamma) t e^{ - r(\gamma) t}
\]
which in particular converges to $0$ for $t\to \infty$. It remains to find a summable function of $\gamma\in \Phi_\infty$ which dominates the supremum in \eqref{dcl}.

Let $q:V^2\to \mathbb{R}_{\geq0}$ be a positive definite quadratic form with $q<\tr Q_{\bar s(x)}$ for $x\in \overline N$. We have
\begin{equation}\label{qer}
    r(\gamma^{ - 1}X)^2 \geq 2(q(\gamma^{ - 1}X) - \tr Q(X)).
\end{equation}
There exists $n>0$ such that $q(\gamma^{-1}X)>2\tr Q(X)$ for $\gamma\in \Phi_\infty - \Phi_n$, hence  there exists $r_0>0$ such that
\[
    r(\gamma^{ - 1}X,\bar s(x)) > \max( q(\gamma^{ - 1}X)^{1 /2},r_0)
\]
for all $\gamma\in \Phi_\infty$ and $x\in \overline N\cap \gamma^{-1}A$. The function $t e^{-\pi t r(\gamma^{-1}X,\bar s(x))^2}$ is decreasing for $t> 1 / (\pi r(\gamma^{-1}X),\bar s(x))^2$ hence for $t> t_0 := 1 / ( \pi r_0^2)$ we have
\[
    t e^{-\pi t r(\gamma^{-1}X,\bar s(x))^2} < t_0 e^{ - \pi t_0 q(\gamma^{ - 1}X)}.
\] 
This yields the estimate
\[
    e^{ - \pi t r(\gamma^{ - 1}X,\bar s(x))^2}|f(\gamma^{ - 1}X t^{1 /2},x)| < C_0 e^{ - q_0(\gamma^{ - 1}X)}\hspace{1cm} t > t_0.
\]
for some constant $C_0>0$ and $q_0$ a positive definite quadratic form with $q_0< \pi t_0 q$. Thus the series in \eqref{dcl} is dominated by a theta series.
\end{proof}

Let $U\subset V$ be a positive definite subspace of rank $2$. 

\begin{lemma}\label{lem:ll1}
    Suppose that $c_U\cap c_X\neq \varnothing$ and $U\neq \Span X$. Then $u(X)$ restricts to a diffeomorphism of $c_U$ onto a line in $\mathbb{R}^2$ passing through the origin.
\end{lemma}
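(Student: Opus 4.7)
The plan is to reduce the restriction $u(X)|_{c_U}$ to a single scalar function on $c_U$ times a fixed nonzero vector in $\mathbb{R}^2$, and then to analyse that scalar using the hyperboloid model of the hyperbolic line.

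First I establish the geometric setup. Since $c_U \cap c_X \neq \varnothing$, in particular $c_X \neq \varnothing$, so by Lemma \ref{lem:gcl} the space $\Span X$ is positive definite and $c_X = c_{\Span X}$. A point $z \in c_U \cap c_X$ has $X(z) \in U^\perp \cap (\Span X)^\perp = (U + \Span X)^\perp$, so this orthogonal complement contains the negative vector $X(z)$. Since $V$ has signature $(3,1)$ and $U$, $\Span X$ are both positive definite $2$-planes, a signature count forces $\dim(U + \Span X) = 3$ and $U + \Span X$ positive definite; otherwise $U + \Span X$ would either equal $V$ (whose perp is $0$) or have signature $(2,1)$ (whose perp is a positive line), and in either case $(U + \Span X)^\perp$ would contain no negative vector. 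Consequently $L := U \cap \Span X$ is a one-dimensional positive line.

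Next I reduce to a scalar. Let $\pi \colon V \to V' := U^\perp$ be orthogonal projection. For $z \in c_U$ we have $X(z) \in V'$, hence $(X_i, X(z)) = (\pi(X_i), X(z))$. The restriction $\pi|_{\Span X}$ has kernel $L$ and so has $1$-dimensional image; choose a generator $Y$ and write $\pi(X_i) = c_i Y$ for scalars $c_1, c_2 \in \mathbb{R}$. Then
\[
    u(X)|_{c_U}(z) = (Y, X(z)) \cdot (c_1, c_2) \in \mathbb{R}^2.
\]
The vector $(c_1, c_2)$ is nonzero, since otherwise $X_1, X_2 \in U$ would force $\Span X \subset U$ and hence $\Span X = U$, contradicting $U \neq \Span X$. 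The image therefore lies in the line $\mathbb{R}(c_1, c_2)$, and this line passes through the origin since $u(X)(z_0) = 0$ at any $z_0 \in c_U \cap c_X$.

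Finally I show $z \mapsto (Y, X(z))$ is a diffeomorphism $c_U \to \mathbb{R}$. The vector $Y$ lies in $V' \cap (U + \Span X)$, which is contained in the positive definite space $U + \Span X$, so $Y$ is positive in the signature $(1,1)$ space $V'$. Choose an orthogonal hyperbolic basis $e_+, e_-$ of $V'$ and parametrize the component $c_U$ by unit-speed geodesic as $X(z(t)) = \sinh(t)\, e_+ + \cosh(t)\, e_-$. Writing $Y = ae_+ + be_-$, positivity of $Y$ gives $a^2 > b^2$, and $(Y, X(z(t)))$ becomes (up to a positive constant) $a\sinh(t) - b\cosh(t)$; its derivative $a\cosh(t) - b\sinh(t)$ never vanishes (since $|\tanh(t)| < 1 < |a/b|$) and the asymptotics $(a-b)e^t/2 - (a+b)e^{-t}/2$ give surjectivity onto $\mathbb{R}$. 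Combining the three steps, $u(X)|_{c_U}$ is a diffeomorphism onto the line $\mathbb{R}(c_1, c_2)$, which passes through the origin.

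The main obstacle is the signature analysis in the first step: all subsequent claims, both that $\pi(\Span X)$ is a well-controlled $1$-dimensional subspace with $(c_1, c_2) \neq 0$ and that $Y$ is positive (hence that the scalar function is globally monotone), depend on extracting positive definiteness of $U + \Span X$ from the single hypothesis $c_U \cap c_X \neq \varnothing$.
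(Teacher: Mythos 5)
Your proof is correct and follows essentially the same route as the paper: both arguments reduce to the signature-$(1,1)$ space $U^\perp$, use the one-dimensionality of $U\cap\Span X$ (equivalently, of the projection of $\Span X$ to $U^\perp$) to see that $u(X)|_{c_U}$ takes values in a line through the origin, and then parametrize $c_U$ as a hyperbola in $U^\perp$ to check that the remaining scalar factor is a diffeomorphism onto $\mathbb{R}$. The only difference is cosmetic: the paper centers the parametrization at the intersection point $z$, so the scalar is exactly $\sinh(t)$ times a nonzero constant, whereas you use a generic orthogonal basis of $U^\perp$ and verify monotonicity and surjectivity of $a\sinh t - b\cosh t$ by hand.
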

\begin{proof}
Let $z$ be the intersection point of $c_U$ and $c_X$. Then $X(z)^\perp = U+\Span X$. Since $U\neq\Span X$ this implies that $U\cap \Span X$ is one dimensional. Let $Y=y_1X_1+y_2X_2$ be a vector spanning $U\cap \Span X$. Since $u(Y)=y_1u(X_1)+y_2u(X_2)$ vanishes on $c_U$, it follows that $u(X)$ maps $c_X$ into the line
\[
    \{ u\in \mathbb{R}^2 : y_1u_1 + y_2u_2 = 0\}.
\]
Now $U^\perp$ is a hyperbolic plane, and $X(z)$ is a negative vector in it. Hence we can find $Y'\in U^\perp$ such that $Q(Y')=1$ and $(Y',X(z))=0$. Then we may find a parametrization $z(t)$, $t\in \mathbb{R}$ of $c_U$ such that 
\[
    X(z(t)) =\cosh(t) X(z) +\sinh(t) Y' \hspace{1cm} t\in \mathbb{R}.
\]
In particular $u(X,z(t)) = \sinh(t) (Y',X)$, and as $\sinh$ is a diffeomorphism of $\mathbb{R}$ this proves the lemma.
\end{proof}

For $\gamma\in \Gamma$ we have a smooth map
\begin{equation}
    a_{\gamma} :c_{\gamma^{ - 1}U}\to p^\ast C(U) \hspace{1cm} t \mapsto (\Gamma_U\gamma t,\Gamma_X t).
\end{equation}
If $\gamma'\in \Gamma_U$ and $\gamma_X\in \Gamma_X$ then $a_{\gamma'\gamma\gamma_X}$ and $a_{\gamma}$ differ by the diffeomorphism $c_{\gamma_X^{-1}\gamma^{-1}U}\to c_{\gamma^{-1}U}$ given by $t\mapsto \gamma_X t$. Hence, up to diffeomorphisms of the domain, the map $a_{\gamma}$ only depends on the class of $\gamma$ in $\Gamma_U\backslash \Gamma / \Gamma_X$. 

\begin{lemma} \label{lem:ll2}
    Let $\gamma\in \Gamma$. Then
    \begin{enumerate}[label=(\alph*),font=\upshape]
    \item if $\gamma^{-1}U\neq \Span X$ then the map $a_{\gamma}$ is a diffeomorphism onto a connected component of $p^\ast C(U)$;
    \item if $\gamma^{-1}U= \Span X$ then the map $C(X)  \to p^\ast C(U)$ induced by $a_{\gamma}$ is a diffeomorphism onto a connected component of $p^\ast C(U)$;
    \item the map 
    \begin{equation}\label{cce}
        \Gamma_U\backslash \Gamma / \Gamma_X\to \pi_0(p^\ast C(U)),
    \end{equation}
    sending $\gamma$ to the image of $a_{\gamma}$, is a bijection.
\end{enumerate}
\end{lemma}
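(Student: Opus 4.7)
The plan is to identify $p^\ast C(U)$ explicitly as an orbit space and then read off its connected components. Set $\widetilde{W} := \{(w,z)\in c_U\times\mathcal{H} : \Gamma w = \Gamma z\}$ with the componentwise $(\Gamma_U\times\Gamma_X)$-action. The canonical projections give a natural identification $p^\ast C(U)\cong \widetilde{W}/(\Gamma_U\times\Gamma_X)$. Since $\Gamma$ is torsion-free and hence acts freely on $\mathcal{H}$, for each $(w,z)\in\widetilde{W}$ there is a unique $\gamma\in\Gamma$ with $w=\gamma z$, and then $z\in c_{\gamma^{-1}U}$. This yields the decomposition $\widetilde{W}=\bigsqcup_{\gamma\in\Gamma}c_{\gamma^{-1}U}$ via $z\mapsto(\gamma z,z)$ on the $\gamma$-th summand.

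A direct check shows that $(\gamma_U,\gamma_X)\in\Gamma_U\times\Gamma_X$ sends the $\gamma$-summand to the $(\gamma_U\gamma\gamma_X^{-1})$-summand via $z\mapsto\gamma_X z$. Hence the orbits of summands are indexed by double cosets $\Gamma_U\backslash\Gamma/\Gamma_X$, and since each summand is connected, these orbits are precisely the connected components of the quotient. This already proves (c). Moreover, the stabilizer of the $\gamma$-summand is $\{(\gamma\gamma_X\gamma^{-1},\gamma_X):\gamma_X\in\Gamma_X\cap\Gamma_{\gamma^{-1}U}\}$, acting on the summand by $z\mapsto\gamma_X z$. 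Consequently the component corresponding to $\gamma$ is $(\Gamma_X\cap\Gamma_{\gamma^{-1}U})\backslash c_{\gamma^{-1}U}$, and $a_\gamma$ is the obvious quotient map composed with the inclusion of this component.

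It remains to compute $\Gamma_X\cap\Gamma_{\gamma^{-1}U}$ in the two cases. In case (b), $\gamma^{-1}U=\Span X$ gives $c_{\gamma^{-1}U}=c_X$ by Lemma \ref{lem:gcl}, and since $\Gamma_X$ is exactly the pointwise stabilizer of $\Span X$, the intersection equals $\Gamma_X$; thus $a_\gamma$ factors through a diffeomorphism $C(X)\to p^\ast C(U)$ onto its component. The main work is case (a): any element of $\Gamma_X\cap\Gamma_{\gamma^{-1}U}$ fixes $\Span X+\gamma^{-1}U$ pointwise, and since $\gamma^{-1}U\neq \Span X$ while both are rank $2$, this sum has dimension $3$ or $4$. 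An element of $G$ fixing a subspace of $V$ of codimension at most $1$ pointwise must be the identity, because on the orthogonal complement it would act as $\pm 1$ and the $-1$ option has total determinant $-1$, hence is not in $G$. Thus the intersection is trivial and $a_\gamma$ is a diffeomorphism onto its component. The only genuine subtlety in the whole argument is this elementary rigidity fact about $G$; everything else is bookkeeping with the fibre product description.
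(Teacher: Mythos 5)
Your identification of $p^\ast C(U)$ with $\widetilde{W}/(\Gamma_U\times\Gamma_X)$ and the resulting decomposition of $\widetilde{W}$ into the pieces $c_{\gamma^{-1}U}$ indexed by $\gamma\in\Gamma$ is, up to packaging, the same decomposition the paper uses: the paper exhibits the diffeomorphism $\bigsqcup_{\gamma\in\Sigma}(\Gamma_{\gamma^{-1}U}\cap\Gamma_X)\backslash c_{\gamma^{-1}U}\to p^\ast C(U)$ by writing down an explicit inverse, whereas you derive it from freeness of the $\Gamma$-action on $\mathcal{H}$ together with an orbit--stabilizer computation for the $(\Gamma_U\times\Gamma_X)$-action. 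Both routes reduce (a) to showing $\Gamma_X\cap\Gamma_{\gamma^{-1}U}=1$, and this is where you genuinely diverge: the paper restricts such an element to the signature $(1,1)$ plane $(\gamma^{-1}U)^\perp$, notes that it fixes a nonzero vector there, and invokes torsion-freeness of $\Gamma$; you argue purely linear-algebraically inside $\SO(V)$, which (once correct) gives the stronger statement that the pointwise stabilizer of $\Span X+\gamma^{-1}U$ in all of $G$ is trivial, with no appeal to torsion-freeness.

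There is, however, a gap in your justification of that rigidity fact. You say an element of $G$ fixing a codimension-$\le 1$ subspace $W$ pointwise must act as $\pm1$ on the orthogonal complement; this implicitly uses $V=W\oplus W^\perp$, i.e., that $W$ is nondegenerate. But $W=\Span X+\gamma^{-1}U$ need not be nondegenerate: two distinct positive definite planes in a $(3,1)$-space can span a degenerate $3$-dimensional subspace. For example, with $Q=x_1^2+x_2^2+x_3^2-x_4^2$, the planes $\langle e_1,e_2\rangle$ and $\langle e_1,\,e_2+e_3+e_4\rangle$ are both positive definite and distinct, yet they span $\langle e_1,e_2,e_3+e_4\rangle$, whose radical is the null line $\langle e_3+e_4\rangle$; there $W^\perp\subset W$ and your dichotomy says nothing. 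The conclusion survives and the patch is short: for $v\notin W$ the matrix of $g$ in a basis extending $W$ is block triangular with diagonal $(1,1,1,\lambda)$, so $\det g=1$ forces $gv=v+w$ with $w\in W$; the isometry condition against $W$ forces $w\in W\cap W^\perp$, hence $w$ is isotropic; and then $(gv,gv)=(v,v)$ gives $(v,w)=0$, which together with $w\perp W$ puts $w$ in the radical of $V$, so $w=0$. (Alternatively, fall back on the paper's torsion-freeness argument.) With this one step repaired, your treatment of (a), (b), and the double-coset bijection (c) is correct.
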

\begin{proof}
First, we claim that in (a) we have $\Gamma_{\gamma^{-1}U}\cap \Gamma_X=1$. Indeed any element $\gamma\in \Gamma_{\gamma^{-1}U}\cap \Gamma_X$ would fix the projection of $\Span X$ to $U^\perp$, but there is only one non-trivial isometriy of the hyperbolic plane $(\gamma^{-1}U)^\perp$ which has a fixed vector and it is an involution, so as $\Gamma$ is torsion-free $\gamma=1$.

With that in mind, (a), (b), and (c) will all follow from the fact that the immersion
\[
    \bigsqcup_{\gamma\in \Sigma} (\Gamma_{\gamma^{-1}U}\cap \Gamma_X)\backslash c_{\gamma^{ - 1}U}\to p^\ast C(U)
\]
is a diffeomorphism, where $\Sigma\subset \Gamma$ is a set of representatives for $\Gamma_U\backslash \Gamma/ \Gamma_X$. 

The map in the other direction is given as follows: An element of $p^\ast C(U)$ is determined by a pair $(t,z)\in C(U)\times \Gamma_X\backslash \mathcal{H}$ with the same image in $M$. Letting $\tilde t\in c_U$ and $\tilde z\in \mathcal{H}$ be lifts of $t$ and $z$, this means that $\gamma \tilde z=\tilde t$ for a unique $\gamma\in \Gamma$. Writing $\gamma'\gamma_0\gamma_X=\gamma$, where $\gamma'\in \Gamma_U$, $\gamma_X\in\Gamma_X$ and $\gamma_0\in \Sigma$, it follows that $\gamma_0\gamma_X \tilde z = (\gamma')^{-1} \tilde t\in c_U$ and it is then easy to check that the inverse map is given by
\[
    (t,z)\mapsto \gamma_X\tilde z \in c_{\gamma_0^{ - 1}U}. 
\]
\end{proof}

\begin{remark}\label{rem:gbi}
Assume that $c(U)$ is closed. If $c(U)\neq c(X)$, then Lemma \ref{lem:ll2}(a) implies that every component of $p^\ast C(U)$ is noncompact. If $c(U)=c(X)$, then Lemma \ref{lem:ll2} implies that $p^\ast C(U)$ has exactly one closed component, corresponding to the identity coset in $\Gamma_X\backslash \Gamma / \Gamma_X$, and by construction the closed component maps diffeomorphically onto $\tilde c_X$ by $p^\ast c(U)$.
\end{remark}


\subsubsection{Proof of Proposition \ref{prp:ihi} and \ref{prp:bmt}}

By Lemma \ref{lem:ufl2}, we are left with proving that
\begin{equation}\label{ufi}
    \lim_{t\to\infty}\int_{p^\ast S} \tilde s^\ast\varphi^0(Xt^{1 /2}) = \iota(X;s),
\end{equation}
when $(t)$ holds.

Let $\tilde x = (x,t)$ be an intersection point of $s$ and $c(X)$. We recall that equivalently this just means that $\tilde x\in \tilde s^{-1}(\tilde c_X)$, that is $u(X)$ vanishes on $\tilde s(\tilde x)$.

If $\tilde x$ belongs to the interior of $p^\ast S$, then $(t^0)$ implies that $u(X)\circ \tilde s$ is regular at $\tilde x$ hence there exists an open neighborhood $\tilde x\in A_{\tilde x}\subset p^\ast S$ which is mapped diffeomorphically onto an open disk in $0\in B_{\tilde x}\subset\mathbb{R}^2$. Let $\varepsilon_{\tilde x}$ be equal to $1$ (resp. $-1$) if $u(X)\circ \tilde s$ is orientation preserving (resp. reversing). Then
\begin{equation}\label{ipi}
\lim_{t\to \infty}\int_{A_{\tilde x}} \tilde s^\ast \varphi^0(Xt^{1 /2}) =  \varepsilon_{\tilde x}\lim_{t\to\infty}\int_{B_{\tilde x}} te^{ - \pi t(u_1^2 + u_2^2)} du_1\wedge du_2 
= \varepsilon_{\tilde x}.
\end{equation}
If $\tilde x\in p^\ast C$ and it is a good boundary intersection point, then $(t_i)$ implies that $u(X)\circ \tilde s$ is regular at $\tilde x$. Let $\tilde c\subset p^\ast C$ be the connected component containing $\tilde x$. Since $\tilde x$ is good Remark \ref{rem:gbi} implies that $\tilde c$ is non-compact and by Lemma \ref{lem:ll1} and \ref{lem:ll2}(a) we see that $u(X)\circ \tilde s$ maps $\tilde c$ diffeomorphically onto a line in $\mathbb{R}^2$. Consider a neighborhood $\tilde x\in N\subset p^\ast S$ such that $s|_N$ admits an extension to a surface $N'\supset N$ and with the further property that $N- \tilde c$ and $N'- N$ are distinct connected components of $N'-\tilde c$. As before we can find a neighborhood $A_{\tilde x}'\subset N'$ which is mapped diffeomorphically onto an open disk $B_{\tilde x}\subset \mathbb{R}^2$ by $u(X)\circ \tilde s$. If $A_{\tilde x}:= A_{\tilde x}'\cap N$ then $A_{\tilde x}- \tilde c$ and $A_{\tilde x}'-A_{\tilde x}$ must map to each of the two connected components of $B_{\tilde x}-\ell$, that is the two half-disks seperated by $\ell$. Then
\begin{equation}\label{gbi}
\lim_{t\to \infty}\int_{A_{\tilde x}} \tilde s^\ast \varphi^0(Xt^{1 /2}) =  \varepsilon_{\tilde x}\lim_{t\to\infty}\int_{B_{\tilde x}\cap H} te^{ - \pi t(u_1^2 + u_2^2)} du_1\wedge du_2 = \frac{1}{2}\varepsilon_{\tilde x},
\end{equation}
where $H\subset \mathbb{R}^2$ denotes the half-plane containing the image of $A_{\tilde x}$.

Only the bad boundary intersection points remain now. Recall that these can only occur if $c(X)$ is closed, in which case they are precisely the points on the closed component $\tilde c\subset p^\ast C_i$ which maps diffeomorphically onto $\tilde c_X$. We use the following lemma in differential geometry, which is essentially just an extension of the inverse function theorem to a setting where the manifolds have boundary.

\begin{lemma}\label{lem:tdg}
    Let $X$ be a closed manifold and $u:X\times \mathbb{R}_{\geq0}\to \mathbb{R}^2$ a smooth map with $u(x,0)=0 $ and $\frac{\partial u}{\partial t} (x,0)\neq0$ for all $x\in X$ ($t$ denotes the coordinate on $\mathbb{R}_{\geq0}$). Let $r:X\times \mathbb{R}_{\geq0}\to \mathbb{R}_{\geq 0}$ be defined by $r(x,t)=\sqrt{u_1(x,t)^2+u_2(x,t)^2}$. Then the map
    \[
        \phi:X\times \mathbb{R}_{\geq0}\to X\times \mathbb{R}_{\geq_0} \hspace{1cm} (x,t)\mapsto (x,r(x,t)).
    \]
    restricts to a continuously differentiable bijection $A\to B$ between open neighborhoods $A$ and $B$ of $X\times \{0\}$ for which the inverse $B\to A$ is also continuously differentiable.
\end{lemma}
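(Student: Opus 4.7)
My plan is to factor out the singularity of $r$ at $t=0$ so that $\phi$ extends smoothly across the boundary, and then apply the standard inverse function theorem.

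First, I would extend $u$ smoothly to an open neighborhood of $X\times\{0\}$ inside $X\times\mathbb{R}$ (possible because $X\times\mathbb{R}_{\geq 0}$ is a manifold with boundary and $u$ is smooth up to the boundary). Then, using Taylor's theorem with integral remainder (or Hadamard's lemma) in the variable $t$, together with $u(x,0)=0$, I would write
\[
    u(x,t) = t\, v(x) + t^2\, w(x,t),
\]
where $v(x) := \tfrac{\partial u}{\partial t}(x,0)$ and $w$ is smooth on a neighborhood of $X\times\{0\}$. Squaring and collecting terms gives $r(x,t)^2 = t^2 g(x,t)$ with
\[
    g(x,t) = |v(x)|^2 + 2t\,\langle v(x),w(x,t)\rangle + t^2\,|w(x,t)|^2,
\]
which is smooth and satisfies $g(x,0)=|v(x)|^2>0$ by hypothesis. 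Since $X$ is closed (hence compact), $g$ is bounded below by a positive constant on some open neighborhood $W$ of $X\times\{0\}$, so $\sqrt{g}$ is smooth on $W$.

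Next, on $W$ I define the smooth map
\[
    \tilde\phi(x,t) := \bigl(x,\; t\sqrt{g(x,t)}\bigr),
\]
which agrees with $\phi$ on $W\cap (X\times\mathbb{R}_{\geq0})$ and extends it smoothly across the boundary. At any point $(x,0)$, the differential of $\tilde\phi$ is the identity on $T_xX$ and multiplication by $\sqrt{g(x,0)}=|v(x)|\neq 0$ on the $t$-direction, hence invertible. By the inverse function theorem applied pointwise, $\tilde\phi$ is a smooth local diffeomorphism near every $(x,0)$. Compactness of the closed manifold $X\times\{0\}$ then lets me choose uniform radii and patch these local inverses into a smooth diffeomorphism $\tilde\phi: A'\to B'$ between open neighborhoods of $X\times\{0\}$ in $X\times\mathbb{R}$. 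Restricting to $A:=A'\cap(X\times\mathbb{R}_{\geq 0})$ and $B:=B'\cap(X\times\mathbb{R}_{\geq 0})$ yields the required bijection, noting that both $\tilde\phi$ and $\tilde\phi^{-1}$ preserve the boundary $\{t=0\}$ (since $r$ vanishes precisely there on $A$, thanks to $g>0$) and, by continuity, preserve the open half-space on each side.

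The one genuine difficulty is justifying that $\phi$, defined via a square root that is naively only $C^0$ at $t=0$, is actually smooth up to the boundary; this is exactly what the factorization $r = t\sqrt{g}$ accomplishes, and it relies essentially on the hypothesis $\partial_t u(x,0)\neq 0$ (to ensure $g(x,0)>0$) and on compactness of $X$ (to make the positivity uniform). Everything else is bookkeeping. In particular, the conclusion in the statement, that $\phi$ and $\phi^{-1}$ are merely continuously differentiable, is weaker than what this argument actually produces.
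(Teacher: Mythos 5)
Your argument is correct, and it implements exactly the strategy the paper gestures at but does not write out: the paper states Lemma \ref{lem:tdg} with no proof, describing it only as ``essentially just an extension of the inverse function theorem to a setting where the manifolds have boundary,'' and your Hadamard factorization $r=t\sqrt{g}$ with $g(x,0)=\abs{\partial_t u(x,0)}^2>0$ is the natural way to make that precise, removing the apparent non-smoothness of the square root at $t=0$ and reducing to the ordinary inverse function theorem at boundary points. The only step you state loosely is the patching: the clean justification is the standard fact that a local diffeomorphism which is injective on a compact set (here, the identity on $X\times\{0\}$) is injective on a neighborhood of that set, after which restricting to the half-spaces $t\geq 0$ (preserved since $\sqrt{g}>0$) gives the claimed bijection $A\to B$.
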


We apply the lemma to a collar neighborhood of $\tilde c$ in $p^\ast S$ and the map $u=u(X)\circ \tilde s$. We obtain open neighborhoods $A_i\subset p^\ast S$ and $B_i\subset \tilde c\times \mathbb{R}_{\geq0}$ of $\tilde c$ and a continuously differentiable bijection $\phi:A_i\to B_i$ with continuously differentiable inverse, such that the second coordinate of $\phi$ is given by $r(X)\circ\tilde s$. By possibly further shrinking $B_i$, we may assume that $B_i=\tilde c\times [0,\varepsilon)$ for some $\varepsilon>0$.

We then use the following geometric observation: Suppose that $C$ is a closed oriented curve and that $a:C\times [0,\varepsilon)\to \mathbb{R} / \mathbb{Z}$ is a continuous function which is smooth on $C\times (0,\varepsilon)$. For $r\in [0,\varepsilon)$ let $a_r$ denote the map $t\mapsto a(t,r)$. We then have $\int_{C\times \{r\}} da = \deg(a_r)= \deg (a_0)$ for every $r\in (0,\varepsilon)$ by homotopy. Also
\[
    2\pi\lim_{t\to \infty} \int_0^\varepsilon tr e^{ - \pi t r^2} dr = 1
\]
hence

\begin{equation}\label{ilc}
    2\pi \lim_{t\to \infty} \int_{C\times [0,\varepsilon)} t r e^{ - \pi tr^2} dr\wedge da = \deg(a_0).
\end{equation}
We want to apply this with $\tilde c$. To define the map $a$, let
\[
    a(X) :=\, \begin{cases} \frac{1}{2\pi}\arctan\left(\frac{u(X_2)}{u(X_1)}\right) & u(X_1) > 0 \\ 
        \frac{1}{2\pi}\arctan\left(\frac{u(X_2)}{u(X_1)}\right) + \frac{1}{2}& u(X_1) < 0 \\ 
        \frac{1}{4}\sgn(u(X_2)) & u(X_1) = 0
    \end{cases}.
\]
This defines a smooth function $a(X):(\mathcal{H}-c_X)\to\mathbb{R} / \mathbb{Z}$, and hence $a:= a(X)\circ \tilde s \circ \phi^{-1}$ is a smooth function on $\tilde c\times (0,\varepsilon)$. We must show that $a(t,r)$ converges uniformly as $r\to 0$. Let $u:= u(X)\circ \tilde s \circ \phi^{-1}$ hence $u(t,r) = r(\cos(2\pi a(t,r)),\sin(2\pi a(t,r)))$. Let $a_0$ be defined in the same way as $a(X)$ but with $\frac{\partial u}{\partial r}(-,0)$ instead of $u(X)$. By Taylor's theorem for $2$ variables then $\frac{u(t,r)}{r}$ converges uniformly to $\frac{\partial u}{\partial r}(t,0)$ hence we see that $a(t,r)$ converges uniformly to $a_0(t)$.

Finally, we show that $\deg(a_0)$ equals the winding number in the definition of $\iota_i(X;s)$. Let $R$ be a section of $N_{C_i / S}$ with no zeroes, and let $\widetilde R$ be the pullback of $R$ to a section of $N_{\tilde c / p^\ast S}$. The two sections $\widetilde R$ and $\phi_\ast^{-1} (\frac{\partial}{\partial r})$ are both positively oriented bases of $N_{\tilde c / p^\ast S}$, hence they are homotopic. In particular
\[
    du(X)(d\tilde s(\widetilde R)) = du\left(\frac{\partial}{\partial r}\right) = \frac{\partial u}{\partial r}( - ,0).
\]
Note that the left-hand side is homotopic to the map $f_i$ from \eqref{wnm} and the right-hand side is homotopic to $a_0$. Hence by \eqref{ilc} we obtain
\begin{equation}\label{bbi}
    \lim_{t\to\infty}\int_{A_i} \tilde s^\ast \varphi^0(Xt^{1 /2}) = \deg f_i.
\end{equation}
We may pick $A_{\tilde x}$ and $A_i$ small enough so that they do not intersect each other. Then \eqref{ufi} follows by \eqref{ipi}, \eqref{gbi}, \eqref{bbi}, and the second part of Lemma \ref{lem:ufl2} applied to 
$A:= p^\ast S - \bigcup_{\text{good}\;\tilde x} A_{\tilde x} - \bigcup_{i\in I_2} A_i$.


\subsection{An integral formula}\label{sec:scd}

In this subsection we let $(V,Q)$ be a real quadratic space with signature $(1,1)$. We fix an infinite cyclic subgroup $\Gamma$ in the connected component of the identity in $\SO(V)$, an orientation of $V$, and a connected component $\mathcal{D}^+\subset\mathcal{D}$ of the Grassmannian of oriented negative lines in $V$.

Let $X\in V^2$. For $z\in \mathcal{D}$ we let $X(z)\in V$ denote the positively oriented basis vector of the line corresponding to $z$ with $Q(X(z))=1$. We then define $u(X):\mathcal{D}^+\to\mathbb{R}^2$ and $\psi^0(X)\in\Omega^1(\mathcal{D}^+)$ by the same formula as in Section \ref{sec:fai}. If $X$ is not a basis of $V$ then $\psi^0(X)=0$. If $X$ is a basis of $V$ we let 
\begin{equation}\label{ebd}
    \varepsilon(X) = \begin{cases} + 1 & X\;\text{is positively oriented} \\  -1 & X\;\text{is negatively oriented}\end{cases} .
\end{equation}
If $T$ is a $2\times2$ matrix with real eigenvalues $\lambda_1\geq\lambda_2$, we let $\Delta(T)=\lambda_1-\lambda_2$. Equivalently, we have
\begin{equation}\label{dtf}
    \Delta(T) = ((\tr T)^2 - 4\det T)^{1 /2}.
\end{equation}
If $X\in V^2$ is a basis, then $T=Q(X)$ is an indefinite symmetric matrix, hence its eigenvalues are real and $\lambda_1>0>\lambda_2$. In particular $\det(Q(X))<0$ and 
\begin{equation}\label{pec}
    \Delta(Q(X))>|\tr Q(X)|.
\end{equation}

Let $M=\Gamma\backslash \mathcal{D}^+$. 
We assume that $X$ satisfies the obvious analogue of Assumption \ref{ass:agx} so that the averaged form $\alpha(X,\psi^0)$ defines a smooth form on $M$.

We orient $\mathcal{D}$ and hence $M$ as follows. We identify the tangent space at $z\in \mathcal{D}$ with the orthogonal complement of $X(z)$ in $V$. Then we declare a tangent vector $Y$ to be positively oriented if $Y,X(z)$ is a positively oriented basis of $V$.
\begin{proposition}\label{prp:iyf}
We have
\[
    \int_M \alpha(X ,\psi^0) = \varepsilon(X)|\det Q(X)|^{1 /2 }  e^{ 2\pi \tr Q(X)} K_0(2\pi  \Delta(Q(X))),
\]
where $K_0(x) = \int_0^\infty e^{-x \cosh t}dt$ is a modified Bessel function of the second kind. 
\end{proposition}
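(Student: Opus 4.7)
The plan is to unfold the integral to $\mathcal{D}^+$ and reduce everything to an explicit hyperbolic one-variable computation. First, if $X_1, X_2$ are linearly dependent then $\psi^0(X) = 0$ and both sides vanish (note that $|\det Q(X)|^{1/2}=0$ as well in this case since $Q(X)$ is then singular but $X$ is not a basis, so $\varepsilon(X)=0$). So I may assume $X$ is a basis of $V$. Since $\Gamma$ is a discrete subgroup of the one-dimensional group $\SO(V)^+$, which acts freely on any nonzero vector of $V$, the stabilizer $\Gamma_X = \Gamma_{X_1} \cap \Gamma_{X_2}$ is trivial. The standard unfolding argument (using Lemma~\ref{lem:bpl}(a)) then gives
\[
    \int_M \alpha(X,\psi^0) \;=\; \int_{\mathcal{D}^+} \psi^0(X).
\]

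Next, I choose a positively oriented orthogonal basis $v_+, v_-$ of $V$ with $Q(v_+) = +1$, $Q(v_-) = -1$, and parametrize $\mathcal{D}^+$ by $t \in \mathbb{R}$ via
\[
    X(t) = \sinh(t)\,v_+ + \cosh(t)\,v_-.
\]
This correctly traces out one sheet of $\{Q = -1\}$, and a direct check using $\dot X(t), X(t)$ shows it agrees with the orientation fixed at the end of Section~\ref{sec:scd}. Writing $X_i = a_i v_+ + b_i v_-$ and setting $N = \bigl(\begin{smallmatrix} a_1 & a_2 \\ b_1 & b_2 \end{smallmatrix}\bigr)$, the functions $u(X_i)$ pull back to $u_i(t) = 2 a_i \sinh t - 2 b_i \cosh t$. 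Because $(v_+, v_-)$ is positively oriented with Gram matrix $\mathrm{diag}(2,-2)$, one has $\det Q(X) = -(\det N)^2$ and $\sgn(\det N) = \varepsilon(X)$, so $\det N = \varepsilon(X) |\det Q(X)|^{1/2}$.

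Two direct computations then assemble the answer. Using $\cosh^2 - \sinh^2 = 1$, the off-diagonal combination simplifies dramatically:
\[
    u_1\, du_2 - u_2\, du_1 \;=\; 4(a_1 b_2 - a_2 b_1)\,dt \;=\; 4\,\varepsilon(X)|\det Q(X)|^{1/2}\,dt.
\]
The Gaussian exponent, after applying the double-angle identities to $u_1^2 + u_2^2$, takes the shape
\[
    r(X)^2 \;=\; P \cosh(2t) - R \sinh(2t) - 2\tr Q(X),
\]
for $P, R$ determined by the $a_i, b_i$. The angle-addition identity $P\cosh\theta - R\sinh\theta = \sqrt{P^2 - R^2}\,\cosh(\theta - \theta_0)$ (the positivity $P \geq |R|$ follows from Cauchy–Schwarz) reduces the integral, after substituting $u = 2t - \theta_0$, to the standard Bessel representation $K_0(x) = \int_0^\infty e^{-x \cosh u}\,du$, with argument $\pi\sqrt{P^2 - R^2}$.

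The one nontrivial identification is the match $\pi \sqrt{P^2 - R^2} = 2\pi\,\Delta(Q(X))$. This is the algebraic heart of the proof and will be the main obstacle: one must verify
\[
    (a_1^2 + a_2^2 + b_1^2 + b_2^2)^2 - 4(a_1 b_1 + a_2 b_2)^2 \;=\; (\tr Q(X))^2 + 4 (\det N)^2 \;=\; \Delta(Q(X))^2,
\]
which follows from the Lagrange identity $(a_1^2 + a_2^2)(b_1^2 + b_2^2) - (a_1 b_1 + a_2 b_2)^2 = (a_1 b_2 - a_2 b_1)^2$ combined with $\Delta(T)^2 = (\tr T)^2 - 4\det T$ and $\det Q(X) = -(\det N)^2$. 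Once this is established, gathering the $\varepsilon(X) |\det Q(X)|^{1/2}$ prefactor, the shift $e^{2\pi \tr Q(X)}$ coming from the constant term in the Gaussian exponent, and the Bessel integral yields the stated identity. Aside from this identity the remainder of the proof is routine bookkeeping of constants.
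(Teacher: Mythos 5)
Your proposal is correct and takes essentially the same route as the paper: unfold $\int_M\alpha(X,\psi^0)$ to $\int_{\mathcal{D}^+}\psi^0(X)$, coordinatize $\mathcal{D}^+$ explicitly, and recognize the resulting one-variable integral as $K_0$. The only difference is cosmetic --- the paper uses an isotropic basis $e_1,e_2$ with $X(z)=-z^{-1}e_1+ze_2$ and the coordinate $z\in(0,\infty)$, while you use an orthogonal basis and the hyperbolic angle $t$; the paper's substitution $z=(\alpha\beta^{-1}e^t)^{1/2}$ is exactly the change of variables identifying the two pictures, and your Lagrange identity is the same identity the paper invokes to get $\Delta(Q(X))=[(a_1^2+a_2^2)(b_1^2+b_2^2)]^{1/2}$. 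One warning about the step you dismiss as ``routine bookkeeping of constants'': as displayed, your formulas give the prefactor $\tfrac12\cdot 4(a_1b_2-a_2b_1)\,dt=2\det N\,dt$ and then $\tfrac12\int_{\mathbb{R}}e^{-2\pi\Delta\cosh u}\,du=K_0(2\pi\Delta)$, so the total is $2\,\varepsilon(X)|\det Q(X)|^{1/2}e^{2\pi\tr Q(X)}K_0(2\pi\Delta(Q(X)))$, i.e.\ twice the stated right-hand side. Before concluding that you slipped, note that the paper's own computation has the identical feature: from $u_i=a_iz-b_iz^{-1}$ one gets $u_1\,du_2-u_2\,du_1=2(a_1b_2-a_2b_1)\tfrac{dz}{z}$, so the line $\psi_0(X)=\tfrac12(a_1b_2-b_1a_2)\tfrac{dz}{z}$ in the paper should read $(a_1b_2-a_2b_1)\tfrac{dz}{z}$, which would likewise double the final answer. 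Your computation and the paper's therefore agree with each other, and the residual factor of $2$ is a normalization issue between the computation and the statement rather than a defect of your approach; but you should carry the constants through explicitly rather than assert the match.
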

\begin{proof}
By unfolding
\begin{equation}\label{duf}
    \int_M \alpha(X ,\psi^0) = \int_{\mathcal{D}^+} \psi^0(X ).
\end{equation}
We fix a positively oriented basis $e_1,e_2\in V$ such that $Q(e_1)=Q(e_2)=0$ and $(e_1,e_2)=1$. There is then an orientation preserving identification $\mathcal{D}=\mathbb{R} - \{0\}$ such that we have $X(z)=-z^{-1}e_1+ze_2$ for $z\neq0$. By potentially using the basis $-e_1,-e_2$ instead of $e_1,e_2$ we can ensure that $\mathcal{D}^+=\mathbb{R}_{>0}$. 

Let $X_i=a_ie_1+b_ie_2$. We have $u(X_i) = a_i z-b_i z^{-1}$ and $du(X_i) = (a_iz+b_i z^{-1})\frac{dz}{z}$, hence a straightforward calculation gives
\[
    \psi_0(X) = \frac{1}{2}(a_1b_2 - b_1a_2) \frac{dz}{z}
\]
and
\[
    r(X)^2 = (a_1^2 + a_2^2)z^{ - 2} - 2(a_1b_1 + a_2b_2) + (b_1^2 + b_{2}^2) z^2.
\]
It follows that
\begin{equation}\label{cif}
    \int_{\mathcal{D}^+} \psi^0(X) = \frac{1}{2}(b_1a_2 - a_1b_2)e^{ 2\pi(a_1b_1 + a_2b_2)} \int_{0}^\infty e^{ -\pi((a_1^2 + a_2^2)z^{ - 2} + (b_1^2 + b_{2}^2) z^2)} \frac{dz}{z}.
\end{equation}
If $\alpha,\beta>0$ then the substitution $z = (\frac{\alpha}{\beta}e^t)^{1 /2}$ yields
\[
    \int_{0}^\infty e^{ - \pi(\alpha^2 z^{ - 2} + \beta^2 z^2)} \frac{dz}{z} = \frac{1}{2}\int_{ - \infty}^\infty e^{ -2\pi\alpha \beta\cosh t} dt = K_0(2\pi \alpha \beta).
\]
Taking $\alpha=(a_1^2 + a_2^2)^{1 /2}$ and $\beta = (b_1^2+b_2^2)^{1 /2}$, then in conjunction with \eqref{duf} and \eqref{cif} this gives an explicit formula for $\int_M\alpha(X,\psi^0)$. It remains to verify that the constants are correct. A simple calculation gives
\[
    \tr Q(X) = a_1b_1 + a_2b_2\hspace{1cm}\det Q(X) = - \frac{1}{4}(a_1b_2 - b_1a_2)^2
\]
hence by \eqref{dtf}
\[
    \Delta(Q(X)) = [(a_1^2 + a_2^2)(b_1^2 + b_2^2)]^{1 /2} = \alpha \beta,
\]
and lastly, we have
\[
    \varepsilon(X) = \sgn\det \begin{pmatrix} a_1 & a_2 \\ b_1 & b_2 \end{pmatrix} = \sgn(a_1b_2 - b_1a_2).
\]
\end{proof}

\begin{corollary}\label{cor:dcc}
    The form $\int_M \alpha(X t^{1 /2},\psi^0)\frac{dt}{t}$ is integrable on $[1,\infty)$ and
    \[
        \int_1^\infty \int_M \alpha(X t^{1 /2},\psi^0)\frac{dt}{t} = \varepsilon(X) W^\ast(\tr(Q(X)), 4\abs{\det(Q(X))}).
    \]
    where
    \[
        W^\ast(x_1,x_2) := \frac{1}{2}\sqrt{x_2} \int_1^\infty e^{2\pi t x_1} K_0\left(2\pi t \sqrt{x_1^2 + x_2}\right)dt \hspace{1cm} x_1,x_2\in \mathbb{R}, x_2 > 0.
\]
\end{corollary}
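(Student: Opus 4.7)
The plan is to substitute $Xt^{1/2}$ for $X$ in the formula of Proposition \ref{prp:iyf} and then integrate $dt/t$ over $[1,\infty)$. Under the positive rescaling $X \mapsto Xt^{1/2}$, the bilinear form transforms as $Q(Xt^{1/2}) = tQ(X)$, so $\tr Q(Xt^{1/2}) = t\tr Q(X)$, the eigenvalues of $Q(X)$ scale by $t$ and hence $\Delta(Q(Xt^{1/2})) = t\Delta(Q(X))$, and $|\det Q(Xt^{1/2})|^{1/2} = t\,|\det Q(X)|^{1/2}$. Since $\varepsilon$ is determined by the sign of the coordinate determinant of $X$ in a fixed basis of $V$, it is unchanged: $\varepsilon(Xt^{1/2}) = \varepsilon(X)$. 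Proposition \ref{prp:iyf} therefore gives
\[
\int_M \alpha(Xt^{1/2},\psi^0) = \varepsilon(X)\, t\, |\det Q(X)|^{1/2}\, e^{2\pi t \tr Q(X)} K_0(2\pi t\,\Delta(Q(X))).
\]

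Next I would verify that this expression, divided by $t$, is integrable on $[1,\infty)$. Both sides vanish when $X$ is not a basis (since then $\psi^0(X) = 0$), so I may assume $X$ is a basis, in which case $\det Q(X) < 0$ and the key inequality \eqref{pec} gives $\Delta(Q(X)) > |\tr Q(X)|$. Combined with the classical asymptotic $K_0(y) \sim \sqrt{\pi/(2y)}\,e^{-y}$ as $y\to\infty$, the integrand behaves like a constant times $t^{-1/2} e^{-2\pi t(\Delta(Q(X)) - \tr Q(X))}$, which is rapidly decreasing. Hence
\[
\int_1^\infty \int_M \alpha(Xt^{1/2},\psi^0)\,\frac{dt}{t} = \varepsilon(X)\,|\det Q(X)|^{1/2} \int_1^\infty e^{2\pi t \tr Q(X)} K_0(2\pi t\,\Delta(Q(X)))\,dt.
\]

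Finally, I would match this with the definition of $W^*$. Setting $x_1 = \tr Q(X)$ and $x_2 = 4|\det Q(X)| = -4\det Q(X)$, formula \eqref{dtf} gives $\sqrt{x_1^2 + x_2} = \sqrt{(\tr Q(X))^2 - 4\det Q(X)} = \Delta(Q(X))$, while $\tfrac{1}{2}\sqrt{x_2} = |\det Q(X)|^{1/2}$. The right-hand side above is therefore exactly $\varepsilon(X)\, W^*(\tr Q(X), 4|\det Q(X)|)$, as required.

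There is no real obstacle: the statement is a direct corollary of Proposition \ref{prp:iyf} by scaling and substitution, and no Fubini-type interchange is needed because the inner integral over $M$ is evaluated first in closed form. The only substantive point is the integrability over $t$, which is handled precisely by the signature-theoretic inequality \eqref{pec}.
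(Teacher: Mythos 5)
Your proposal is correct and follows essentially the same route as the paper: substitute $Xt^{1/2}$ into Proposition \ref{prp:iyf}, use the homogeneity of $\tr Q$, $\det Q$, $\Delta$, and $\varepsilon$ under positive scaling, establish integrability on $[1,\infty)$ from the decay of $K_0$ together with the inequality \eqref{pec}, and match the result with the definition of $W^\ast$. The only cosmetic difference is that the paper uses the explicit bound $K_0(x) < \sqrt{\pi/(2x)}\,e^{-x}$ (from $\cosh t \geq 1 + t^2/2$) where you invoke the corresponding asymptotic, which serves the same purpose here.
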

\begin{proof}
We use the inequality
\begin{equation}\label{mbfi}
    K_0(x) < \sqrt{\frac{\pi}{2x}} e^{-x},\hspace{1cm} x>0,
\end{equation}
which follows from $\cosh(t)\geq 1+\frac{t^2}{2}$. It follows that
\begin{equation}\label{bfi}
    e^{2\pi t\tr Q(X)}K_0(2\pi t\Delta(Q(X))) < \frac{1}{2 \sqrt{t\Delta(Q(X))}}e^{ - 2\pi t (\Delta(Q(X)) - \tr Q(X))}.
\end{equation}
Then \eqref{pec} and Proposition \ref{prp:iyf} imply the integrability and the formula.
\end{proof}


\section{Modular generating series}\label{sec:mgs}

In this section, we state and prove the main results of the paper.


\subsection{Special cycles}

Let $(V,Q)$ be a nondegenerate rational quadratic space. We assume that $V_\mathbb{R}$ has signature $(3,1)$, and we also fix an orientation of $V(\mathbb{R})$. Let $\mathcal{H}$ be a connected component of the Grassmannian of oriented negative lines in $V(\mathbb{R})$. Also, let $G=\SO(V)$ and let $\Gamma\subset G(\mathbb{Q})$ be a torsion-free arithmetic subgroup that is contained in the connected component of $G(\mathbb{R})$. 

This data determines the arithmetic hyperbolic $3$-manifold $M := \Gamma\backslash \mathcal{H}$. Using the orientation of $V(\mathbb{R})$ we orient $\mathcal{H}$ and $M$ as described in Subsection \ref{sec:tst}. If $U\subset V$ is a rational subspace such that $U_{\mathbb{R}}$ is positive definite and oriented, then $c(U):=c(U_{\mathbb{R}})$ defines a special cycle in $M$. If $\dim U=1$, then $c(U)$ is an immersed arithmetic hyperbolic surface. If $\dim U=2$, then $c(U)$ is either a hyperbolic geodesic, or an infinite length geodesic connecting two of the cusps of $M$. The first case occurs precisely when $U^\perp$ is anisotropic. Let $d(V)\in \mathbb{Q}^\times / (\mathbb{Q}^\times)^2$ be the discriminant of $V$, hence by the signature condition $d(V)<0$. If $X\in V(\mathbb{Q})^2$ and $Q(X)\in\Sym_2(\mathbb{Q})$ is positive definite, then it follows that $c(X)$ defines a hyperbolic geodesic when $-d(V)\det Q(X)$ is not a rational square. By Remark \ref{rem:asa}, this is equivalent to $\sqrt{-d(V)}\tan\theta\not\in\mathbb{Q}^\times$, where $\theta$ is the angle between the arithmetic hyperbolic surfaces defined by $X_1$ and $X_2$ along $c(X)$.

\begin{example}\label{exa:sce}
Let
\[
    V = \left\{\begin{pmatrix}
        -x & y \\ z & \overline x
    \end{pmatrix} : x\in \mathbb{Q}(i),\;y,z\in \mathbb{Q} \right\}.
\]
with $Q(X)=-\det X$. We then have an isomorphism
\begin{equation}\label{gqi}
    G(\mathbb{Q}) \cong \{g\in \GL_2(\mathbb{Q}(i)):\det g\in \mathbb{Q}^\times\} / \mathbb{Q}^\times,
\end{equation}
with the action of the right-hand side on $V$ given by \eqref{gaf} \cite[10.2.2]{MR1937957}. In particular, there is a short exact sequence
\[
    1\to \PSL(\mathbb{Q}(i))\to G(\mathbb{Q})\to \mathbb{Q}^\times / (\mathbb{Q}^\times)^2\to 1,
\]
where the second map is the spinor norm, given in terms of \eqref{gqi} by $g\mapsto\det g$. The subgroup $\PSL_2(\mathbb{Z}[i])\subset G(\mathbb{Q})$ is arithmetic, hence our setup applies to any torsion-free finite index subgroup of $\PSL_2(\mathbb{Z}[i])$. We have $d(V)=-1$, so the geodesics $c(X)$ are closed when $\tan\theta\not\in\mathbb{Q}$.
\end{example}

\subsection{Main theorems} \label{sec:mts}

Let $\varphi_f\in \mathscr{S}(V(\mathbb{A}_f)^2)$ be a finite Schwartz-Bruhat function which is fixed by $\Gamma$ and let $T\in \Sym_2(\mathbb{Q})$. For convenience, we also fix a lattice $\Lambda\subset V(\mathbb{Q})^2$ that contains all $X\in V(\mathbb{Q})^2$ such that $\varphi_f(X)\neq0$. 

Let $s$ be a Seifert surface in $M$ whose boundary is a union of hyperbolic geodesics. We define 
\[
    \iota(T,\varphi_f;s) := \sum_{\substack{X\in V(\mathbb{Q})^2 / \Gamma \\ Q(X) = T}} \varphi_f(X) \iota(X;s).
\]
By Proposition \ref{prp:bmt} this is zero when $T$ is not positive definite. When $T$ is positive definite then the sum contains finitely many non-zero terms by \cite[11.9]{borel2019introduction}, see also \cite[p.132]{MR1079646}.

For $v\in \Sym_2(\mathbb{R})$ positive definite let
\begin{equation}\label{qav}
    \Theta(T,\varphi_f,\psi^0,v) = \sum_{\substack{X\in V(\mathbb{Q})^2 \\ Q(X) = T }} \varphi_f(X) \psi^0(Xa).
\end{equation}
where $a\in \GL^+_2(\mathbb{R})$ is any matrix such that $v = a\tp{a}$. Note that by Lemma \ref{lem:bpl}(b) this does not depend on the choice of $a$, and by Lemma \ref{lem:nlc} applied to the lattice $\Lambda a$ it converges to a smooth $\Gamma$-invariant form on $\mathcal{H}$ and hence we may identify it with a form on $M$. 

Suppose that $c$ is a hyperbolic geodesic in $M$. We further define 
\begin{equation}\label{bct}
    \beta(T,\varphi_f,v;c) := \int_1^\infty \int_c \Theta(T,\varphi_f,\psi^0,vt) \frac{dt}{t}.
\end{equation}
and for $\tau=u+iv\in \mathfrak{H}$
\begin{align}\label{igs}
    I(\tau,\varphi_f;s) &:= \sum_{T\in \Sym_2(\mathbb{Q})} \iota(T,\varphi_f;s) q^T, \\ \label{ggs}
    G^\ast(\tau,\varphi_f;c) &:= \sum_{T\in \Sym_2(\mathbb{Q})} \beta(T,\varphi_f,v;c) q^T .
\end{align}

\begin{theorem}\label{thm:mta} With notation as above
    \begin{enumerate}[label=(\roman*)]
        \item the series in \eqref{igs} converges absolutely on $\mathfrak{H}$;
        \item for all $T\in \Sym_2(\mathbb{Q})$ the outer integral in \eqref{bct} converges, and the series in \eqref{ggs} converges absolutely on $\mathfrak{H}$.
    \end{enumerate}
\end{theorem}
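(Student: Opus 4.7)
The plan is to deduce both statements from the transgression identity combined with convergence of the integral of the Kudla--Millson theta series over the Seifert surface $s$. Applying Stokes' theorem to the transgression equation of Lemma \ref{lem:bpl}(e), one obtains for each $T \in \Sym_2(\mathbb{Q})$ an identity of the shape
\begin{equation*}
    \int_s \Theta(T,\varphi_f,\varphi^0,v) = \iota(T,\varphi_f;s) + \beta(T,\varphi_f,v;c),
\end{equation*}
the first summand being the limit $t \to \infty$ of $\int_s \Theta(T,\varphi_f,\varphi^0,vt)$ via Proposition \ref{prp:bmt}, and the second being the outer integral \eqref{bct}. Summing formally in $T$ gives $\int_s \Theta(\tau,\varphi_f,\varphi) = I(\tau,\varphi_f;s) + G^\ast(\tau,\varphi_f;c)$, so it suffices to prove (ii) together with absolute convergence on $\mathfrak{H}$ of $\int_s \Theta(\tau,\varphi_f,\varphi)$.

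The absolute convergence of this theta integral follows directly: by Lemma \ref{lem:nlc} applied to the lattice $\Lambda a$ with $a\tp{a}=v$, the series $\sum_X \varphi_f(X) \varphi(Xa) q^{Q(X)}$ converges normally to a smooth form on $M$, uniformly on compact subsets of $\mathfrak{H}$. Since $s$ is compact, termwise integration preserves this normal convergence and gives absolute convergence of $\sum_T \bigl(\int_s \Theta(T,\varphi_f,\varphi^0,v)\bigr) q^T$ on $\mathfrak{H}$.

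For (ii), I would fix $T$ and unfold the inner integral. Because $c = c(U)$ for some positive definite rank-$2$ subspace $U \subset V$, the integral $\int_c \Theta(T,\varphi_f,\psi^0,vt)$ reduces, orbit by orbit, to integrals over $\Gamma_U \backslash c_U$ of the type studied in Section \ref{sec:scd} applied to the projection $\bar X \in (U^\perp)^2$ of signature $(1,1)$. Proposition \ref{prp:iyf} then expresses each such contribution in closed form via $K_0$, and the estimate \eqref{bfi} together with the strict inequality $\Delta(Q(\bar X)) > \lvert\tr Q(\bar X)\rvert$ from \eqref{pec} yields exponential decay in $t$ of rate $\Delta(Q(\bar X)) - \tr Q(\bar X) > 0$. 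This makes the outer $t$-integral absolutely convergent and identifies $\beta(T,\varphi_f,v;c)$ explicitly in terms of the function $W^\ast$ of Corollary \ref{cor:dcc}. Passing to absolute convergence of $\sum_T \beta(T,\varphi_f,v;c) q^T$ on $\mathfrak{H}$, I would combine these estimates with the Gaussian factor $\lvert q^T\rvert = e^{-2\pi \tr(Tv)}$ and majorize by a convergent positive definite theta series, in the spirit of Lemma \ref{lem:nlc}.

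The main obstacle is to make the majorization uniform across both $T$ and the orbits $X$ with $Q(X) = T$: one must bound $\Delta(Q(\bar X)) - \tr Q(\bar X)$ below by a positive definite quadratic function of $X$ itself, not merely of its indefinite projection $\bar X$. This requires choosing the majorant with care, using the geometry of the embedding $c_U \hookrightarrow \mathcal{H}$ to control both the component of $X$ along $U$ (which is handled by $\varphi_f$ being Schwartz on the adelic side, hence rapidly decreasing on any lattice) and the transversal component in $U^\perp$ responsible for the Bessel decay. This is morally parallel to the Siegel-majorant argument of Lemma \ref{lem:nlc}, adapted to the signature $(1,1)$ unfolding along $c$.
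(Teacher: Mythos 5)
Your reduction of (i) to (ii) via the transgression equation and Stokes, and your treatment of the outer $t$-integral for fixed $T$ (unfold along $c=c(U)$, project to $U^\perp$ of signature $(1,1)$, evaluate via the $K_0$-formula of Proposition \ref{prp:iyf}, and use \eqref{pec} for exponential decay in $t$), match the paper's argument. The gap is in the last step: the absolute convergence of $\sum_T \beta(T,\varphi_f,v;c)\,q^T$. Your plan to ``majorize by a convergent positive definite theta series, in the spirit of Lemma \ref{lem:nlc}'' cannot work here, and the obstacle you name --- bounding $\Delta(Q(\bar X))-\tr Q(\bar X)$ below by a positive definite quadratic function of $X$ --- is genuinely insurmountable: after unfolding along the closed geodesic, each $\Gamma_U$-orbit of $\bar X$ contributes a single term $\varepsilon(\bar X)W^\ast(\cdots)$ depending only on the $\Gamma_U$-invariant $Q(\bar X)$, while $\Gamma_U$ is infinite cyclic acting freely, so every orbit is infinite and every positive definite quadratic form is unbounded on it. No Siegel-majorant over lattice vectors can dominate a sum indexed by orbits.

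What is actually needed --- and what your proposal omits entirely --- is an arithmetic bound on the \emph{number} of $\Gamma_U$-orbits of lattice vectors $\bar X\in (U^\perp)^2$ with prescribed indefinite Gram matrix $T'$. In the paper this is Lemma \ref{lem:rnb}, $|\rho(T',\varphi_f')|\ll|\det T'|^{\varepsilon}$, proved via counting elements of given norm in fractional ideals of real quadratic orders modulo units (Lemma \ref{lem:brt}); convergence then follows by combining this with the decay $W^\ast(\tr(T'v),4|\det(T'v)|)\,|q^{T'}|\ll e^{-2\pi\Delta(T'^a)}$ and the lattice-point count $\#\{T':\Delta(T'^a)<R\}\ll R^3$, the point being that $\Delta$ controls all entries of an indefinite symmetric matrix. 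Separately, your remark that the $U$-component is ``handled by $\varphi_f$ being Schwartz on the adelic side, hence rapidly decreasing on any lattice'' is incorrect: a Schwartz--Bruhat function on $V(\mathbb{A}_f)^2$ is an indicator function of finitely many lattice cosets and supplies no decay; the convergence of the $U$-factor comes from positive definiteness of $Q|_U$ and the factor $q^{Q(X'')}$, as in the product decomposition \eqref{tsd} of Lemma \ref{lem:rdl}.
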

We wish to also formulate a modularity result which relates the two series. Let $L\subset V(\mathbb{Q})$ be an integral lattice and let $L^\#\subset V(\mathbb{Q})$ be its dual lattice. Let $D_L = L^\# / L$ denote the discriminant group of $L$. We assume that $\Gamma$ acts trivially on $D_L$. For $\mu\in (D_L)^2$ let $\varphi_f(\mu)\in\mathscr{S}(V(\mathbb{A}_f)^2)$ be the indicator function of 
\[
    \{X\in (L^\#)^2\otimes \widehat{\mathbb{Z}} : X\equiv \mu \mod L^2\otimes \widehat{\mathbb{Z}}\} \subset V(\mathbb{A}_f)^2
\]
Let $(\mathfrak{e}_{\mu})_{\mu\in (D_L)^2}$ be the standard basis of $\mathbb{C}[(D_L)^2]$. We write elements $a\in\mathbb{C}[D_L^2]$ as $a=\sum_{\mu\in D_L^2}a_\mu \mathfrak{e}_\mu$. We then define $\iota_L(T;s),\beta_L(T,v;c)\in\mathbb{C}[D_L^2]$ by
\begin{align*}
    \iota_L(T;s)_\mu &:= \iota(T,\varphi_f(\mu);s),
    \\
    \beta_L(T,v;c)_\mu &:= \beta(T,\varphi_f(\mu),v;c),
\end{align*}
and similarly we define the functions $I_L(-;s),G_L(-;c):\mathfrak{H}\to \mathbb{C}[(D_L)^2]$ by
\begin{align*}
    I_L(\tau;s)_\mu &:= I(\tau,\varphi_f(\mu);s),
    \\
    G^\ast_L(\tau,v;c)_\mu &:= G^\ast(\tau,\varphi_f(\mu),v;c).
\end{align*}
We let $\rho_L:\Sp_4(\mathbb{Z})\to \Aut \mathbb{C}[(D_L)^2]$ denote the finite Weil representation, which will be given a more detailed description in Subsection \ref{sec:mwp}. We then say that a function $f:\mathfrak{H}\to \mathbb{C}[(D_L)^2]$ is modular of weight $k$ and level $1$ if
\[
    f(\gamma \tau) = \det(c\tau +d)^k \rho_L(\gamma) f(\tau) \hspace{1cm} \gamma = \begin{pmatrix}
        a & b \\ c & d
    \end{pmatrix}\in \Sp_4(\mathbb{Z}),\;\tau\in \mathfrak{H}.
\]
\begin{theorem} \label{thm:mtb}
    Assume that $c$ is the boundary of $s$. Then the function 
    \[
        I_L(\tau;s) + G^\ast_L(\tau;c)
    \]
    is modular of weight $2$ and level $1$. 
\end{theorem}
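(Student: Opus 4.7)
The strategy is to introduce a Kudla--Millson theta integral which is manifestly modular, and to identify it with $I_L(\tau;s) + G^\ast_L(\tau;c)$ via the transgression formula.

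\emph{Step 1: the modular theta integral.} Let $\Theta_L(\tau,\varphi_{KM})$ denote the genus-$2$ Kudla--Millson theta series attached to the lattice $L\subset V$, built from $\varphi^0$ (normalized as in Remark \ref{rmk:kmr}) by $\Gamma$-averaging and by summing over the cosets $\mu \in (D_L)^2$. It is a $\mathbb{C}[(D_L)^2]$-valued smooth closed $2$-form on $M$, and by \cite{MR1079646} it transforms in $\tau \in \mathfrak{H}$ as a genus-$2$ Siegel modular form of weight $(3+1)/2 = 2$ and level $1$ under the Weil representation $\rho_L$. Since $s$ is compact, the integral
\[
F_L(\tau;s) := \int_s \Theta_L(\tau,\varphi_{KM})
\]
converges absolutely, and because $\int_s$ is a linear functional independent of $\tau$, the resulting function $F_L(\,\cdot\,;s)\colon \mathfrak{H}\to\mathbb{C}[(D_L)^2]$ inherits the full modular transformation law: for every $\gamma = (\begin{smallmatrix} a & b \\ c & d\end{smallmatrix}) \in \Sp_4(\mathbb{Z})$,
\[
F_L(\gamma\tau;s) = \det(c\tau+d)^2\,\rho_L(\gamma)\,F_L(\tau;s).
\]

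\emph{Step 2: decomposition via transgression.} It remains to show $F_L(\tau;s) = I_L(\tau;s) + G^\ast_L(\tau;c)$ coefficient-by-coefficient. Expanding $\Theta_L$ in Fourier series, the $(T,\mu)$-component of $F_L$ equals $\int_s \Theta(T,\varphi_f(\mu),\varphi^0,v)$ in the notation of \eqref{qav}. The transgression identity of Lemma \ref{lem:bpl}(e), combined with the fundamental theorem of calculus on $[1,\infty)$ applied to the scaled form $\varphi^0(Xt^{1/2})$, Stokes' theorem on $s$ with $\partial s = c$, and termwise summation, yields the decomposition \eqref{tfc}:
\[
\int_s \Theta(T,\varphi_f(\mu),\varphi^0,v) = \lim_{t\to\infty}\int_s \Theta(T,\varphi_f(\mu),\varphi^0,vt) + \int_1^\infty\!\int_c \Theta(T,\varphi_f(\mu),\psi^0,vt)\,\frac{dt}{t}.
\]
By Proposition \ref{prp:bmt} the first term on the right equals $\iota(T,\varphi_f(\mu);s) = \iota_L(T;s)_\mu$, and in particular vanishes when $T$ is not positive definite; the second term is the Fourier coefficient $\beta_L(T,v;c)_\mu$ by the definition \eqref{bct}. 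Summing against $q^T \mathfrak{e}_\mu$ gives $F_L(\tau;s) = I_L(\tau;s) + G^\ast_L(\tau;c)$, and the modularity of the right-hand side follows from Step 1.

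\emph{Step 3: the main obstacle.} The principal difficulty is justifying the interchanges in Step 2: (i) pulling $\int_s$ inside the theta sum; (ii) swapping $\int_s$ with the outer $\int_1^\infty \tfrac{dt}{t}$; (iii) applying Stokes' theorem; and (iv) taking the limit $t\to\infty$ termwise. Item (iii) is immediate since $s$ is compact with smooth boundary and $\psi^0$ is smooth. Items (i), (ii), and (iv) reduce to producing uniform majorants for the Gaussian theta sums $\sum_X \varphi_f(\mu)(X)\,\sigma^0(Xt^{1/2})$ on $\mathcal{H}\times[1,\infty)$. The baseline estimate is Lemma \ref{lem:nlc}, which controls compact $t$-intervals by comparison with a positive definite theta series; the exponential decay of the Gaussian for large $t$ strengthens this to the range $[1,\infty)$. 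The outer integrability in $t$ near $\infty$ relies on the explicit evaluation of Proposition \ref{prp:iyf} for the form $\psi^0$ along a hyperbolic geodesic and the Bessel estimate \eqref{mbfi} used in Corollary \ref{cor:dcc}, which together with the signature inequality \eqref{pec} provide the exponential decay in $t$ needed to make $\beta(T,\varphi_f(\mu),v;c)$ well-defined. These are precisely the analytic statements of Theorem \ref{thm:mta}; once they are established, the identification in Step 2 and hence Theorem \ref{thm:mtb} follow.
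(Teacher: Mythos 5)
Your proposal is correct and follows essentially the same route as the paper: the paper likewise realizes $I_L(\tau;s)+G^\ast_L(\tau;c)$ as $\det(v)^{-1}\int_s\theta_L(g_\tau)$ via the transgression formula, Stokes' theorem, and Proposition \ref{prp:bmt}, and then invokes modularity of the Kudla--Millson theta integral. The only difference is that where you cite \cite{MR1079646} for the $\rho_L$-valued weight-$2$, level-$1$ transformation law, the paper derives it directly from the adelic Weil representation formulas, Poisson summation, the level property \eqref{fwr} of $\varphi_f$, and the weight property \eqref{kmw} of $\varphi_\infty$.
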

\begin{remark}\label{rem:LlN}
    Let $N$ be the level of $L$. Since $\rho_L$ is trivial on the congruence subgroup $\Gamma(N)\subset \Sp_4(\mathbb{Z})$, it follows that $I_L(\tau;s)_\mu+G^\ast_L(\tau;c)_\mu$ is modular of weight $2$ and level $N$, for every $\mu\in D_L^2$.
\end{remark}

Lastly, we obtain the following polynomial estimate for the coefficients of $I_L(\tau;s)$.

\begin{theorem} \label{thm:mtc}
    For every $\mu\in D_L^2$ and every $\varepsilon>0$
    \[
        |\iota_L(T;s)_\mu| \ll \det(T)^{3/2 + \varepsilon}.
    \]
\end{theorem}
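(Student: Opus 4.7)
The plan is to leverage the modularity of $F_L(\tau) := I_L(\tau;s) + G^\ast_L(\tau;c)$ provided by Theorem \ref{thm:mtb} and combine it with direct bounds on the non-holomorphic Fourier coefficients $\beta_L(T,v;c)_\mu$. For any positive definite $v \in \Sym_2(\mathbb{R})$, extracting the $T$th Fourier coefficient of $F_L(\tau)_\mu$ in $u = \Re \tau$ gives the identity
\begin{equation*}
\iota_L(T;s)_\mu \;=\; e^{2\pi \tr(Tv)} \int_{\Sym_2(\mathbb{R})/\Sym_2(\mathbb{Z})} F_L(u+iv)_\mu \, e^{-2\pi i \tr(Tu)} \, du \;-\; \beta_L(T,v;c)_\mu,
\end{equation*}
so the task reduces to bounding both terms on the right for a well-chosen $v$.

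The first term is handled by a Hecke-bound-type argument. By Theorem \ref{thm:mtb} and the weight-$2$ transformation law, $\det(v)\cdot\max_{\mu'} |F_L(\tau)_{\mu'}|$ is $\Sp_4(\mathbb{Z})$-invariant up to the finite unitary Weil representation $\rho_L$, so by Siegel reduction theory it suffices to bound $|F_L|$ on a fundamental domain for the level-$N$ congruence subgroup of Remark \ref{rem:LlN}. Since $F_L$ is not cuspidal, one must analyse its behaviour at every cusp. At the cusp $\infty$ the holomorphic part $I_L(\tau;s)$ decays exponentially because its Fourier indices are positive definite; the non-holomorphic part $G^\ast_L$ is controlled via the Fourier expansion in terms of $W^\ast$ from Corollary \ref{cor:dcc} and the standard estimate $K_0(x) \ll x^{-1/2} e^{-x}$ of \eqref{mbfi}. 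The latter forces each indefinite-$T$ summand of $G^\ast_L$ to decay rapidly in $\sqrt{\tr(Tv)^2 + 4|\det(Tv)|} - \tr(Tv) > 0$, so the sum is dominated by a convergent majorant theta series analogous to the one in the proof of Lemma \ref{lem:nlc}. A parallel analysis at the remaining cusps yields a uniform sup-norm bound $|F_L(\tau)_\mu| \ll \det(v)^{-s}$ on $\mathfrak{H}$ for an explicit exponent $s$.

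The second term $\beta_L(T,v;c)_\mu$ is bounded directly using the same estimate on $W^\ast$ and the indefinite-theta-series description illustrated in Example \ref{exa:ise}: the defining sum is again dominated by a theta series of the Siegel majorant, giving a polynomial dependence on $v$ and $T$ of the same shape as in the first step. Choosing $v = \alpha T^{-1}$ with a scalar $\alpha > 0$, one has $\tr(Tv) = 2\alpha$ bounded and $\det(v)^{-s} = \alpha^{-2s}\det(T)^{s}$, so optimising in $\alpha$ and absorbing the resulting constants yields $|\iota_L(T;s)_\mu| \ll \det(T)^{3/2+\varepsilon}$.

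The hard part is the uniform sup-norm bound on the non-cuspidal form $F_L$ in the first step, which requires controlling the non-holomorphic Fourier expansion of $G^\ast_L$ at every cusp of the level-$N$ subgroup rather than only at $\infty$. The exponent $3/2+\varepsilon$, larger than the cusp-form Hecke value $\det(T)^{k/2} = \det(T)$, reflects this extra cuspidal growth coming from the indefinite-$T$ contributions of $G^\ast_L$ and appears to be the best that this method can produce.
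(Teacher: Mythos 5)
Your overall skeleton matches the paper's: write $\iota_L(T;s)_\mu$ as the $T$th Fourier coefficient of the modular completion $F_L=I_L+G^\ast_L$ minus $\beta_L(T,v;c)_\mu$, prove a Hecke-type sup-norm bound for $F_L$, evaluate at $v=T^{-1}$, and bound $\beta_L(T,T^{-1};c)_\mu$ separately. However, there is a genuine gap exactly where the paper concentrates its effort, namely the bound $|\beta_L(T,T^{-1};c)_\mu|\ll\det(T)^{3/2+\varepsilon}$ (Proposition \ref{prp:bbb}). Your claim that the defining sum is ``again dominated by a theta series of the Siegel majorant, giving a polynomial dependence of the same shape'' is precisely the covolume heuristic that the author records in Remark \ref{rem:eeo} as something they tried (via Poisson summation) and could not push through. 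The actual argument needs two nontrivial arithmetic inputs that your sketch omits entirely: first, the weights in the indefinite Fourier expansion of $G^\ast_L$ are orbit counts $\rho(T',\varphi_f')$ of vectors modulo the infinite cyclic group $\Gamma_U$, and bounding these by $|\det T'|^{\varepsilon}$ requires the divisor-bound/ideal-counting argument in real quadratic orders of Lemmas \ref{lem:brt} and \ref{lem:rnb}; second, the sum over indefinite $T'$ with $T-T'$ positive definite, weighted by $W^\ast$, is controlled not by a crude majorant but by reparametrizing the $T'$ through the rank-$2$ positive definite lattice $\mathfrak{L}_T$ of discriminant $D=-4M^2\det T$ (Lemmas \ref{lem:ltd} and \ref{lem:nre}) and invoking $\sum_{A<x}r_T(A)\ll x/\sqrt{|D|}$; it is this count that produces the exponent $3/2$. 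Without these steps the claimed bound is unsubstantiated.

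Two smaller points. The multi-cusp analysis you flag as the hard part is not needed: because $F_L$ is vector-valued and transforms under all of $\Sp_4(\mathbb{Z})$ via the finite Weil representation, one picks an $\Sp_4(\mathbb{Z})$-equivariant norm so that $\det(v)\,\nm{F_L(\tau;s)}=\nm{\int_s\theta_L(g_\tau)}$ is genuinely $\Sp_4(\mathbb{Z})$-invariant, and one only has to check decay as $\det v\to\infty$ on the standard Siegel fundamental domain; the Gaussian decay of the theta kernel over the compact surface $S$ gives $|F_L(\tau;s)_\mu|\ll\det(v)^{-1}$ directly (Lemma \ref{lem:hbb}), hence the clean Hecke bound $\ll\det T$ for the full coefficient. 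Consequently your attribution of the exponent $3/2+\varepsilon$ to ``extra cuspidal growth'' of $F_L$ is misplaced: the completed form satisfies the same bound as a cusp form would, and the loss to $3/2$ comes entirely from the subtracted term $\beta_L(T,T^{-1};c)_\mu$.
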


\begin{example}\label{exa:mte}
Continuing Example \ref{exa:sce}, we have an integral lattice
\[
    L = \left\{\begin{pmatrix}
        -x & y \\ z & \overline x
    \end{pmatrix} : x\in \mathbb{Z}[i],\;y,z\in \mathbb{Z} \right\}.
\]
The dual lattice is given by
\[
    L^\# = \left\{\begin{pmatrix}
        -b /2 & c \\ -a & \overline b /2
    \end{pmatrix} : b\in \mathbb{Z}[i],\;a,c\in \mathbb{Z} \right\},
\]
and the map $X\mapsto b$ gives an isomorphism $D_L \cong \mathbb{Z}[i] /(2)$. A simple calculation shows that $\PSL_2(\mathbb{Z}[i])\subset G(\mathbb{Q})$ stabilizes $L$ and acts trivially on $D_L$. 

Theorem \ref{thm:imt} and Theorem \ref{thm:ist} follow from Remark \ref{rem:LlN} and Theorem \ref{thm:mtc} for $\mu=0$, applied to this example. 
\end{example}


\subsection{Convergence in signature $(1,1)$}

Let $(V,Q)$ be a rational quadratic space such that $V_{\mathbb{R}}$ has signature $(1,1)$. In this subsection, we state and prove a version of Theorem \ref{thm:mta}(ii) in this setup. 

Assume that $V(\mathbb{Q})$ is anisotropic. Let $G=\SO(V)$ and let $\Gamma\subset G(\mathbb{Q})$
be an infinite cyclic subgroup countained in the connected component of the identity in $G(\mathbb{R})$. As in Section \ref{sec:scd} we fix an orientation of $V(\mathbb{R})$ and a connected component $\mathcal{D}^+$ of the Grassmannian of negative oriented lines in $V(\mathbb{R})$. Thus $M=\Gamma \backslash \mathcal{D}^+$ is an oriented closed connected curve.

Let $\varphi_f\in \mathscr{S}(V(\mathbb{A}_f)^2)$ be fixed by $\Gamma$ and let $T\in \Sym_2(\mathbb{Q})$. As in \eqref{qav} we let
\[
    \Theta(T,\varphi_f,\psi^0,v) = \sum_{\substack{X\in V(\mathbb{Q})^2 \\ Q(X) = T }} \varphi_f(X) \psi^0(Xa).
\]
where $v\in \Sym_2(\mathbb{R})$ is positive definite and $v=a\tp{a}$. We then define
\begin{equation}\label{bcd}
    \beta(T,\varphi_f,v) :=\int_1^\infty\int_M \Theta(T,\varphi_f,\psi^0,vt)\frac{dt}{t} .
\end{equation}

\begin{proposition}\label{prp:gsp}
    Let $\tau=u+iv\in \mathfrak{H}$. For all $T\in \Sym_2(\mathbb{Q})$ the outer integral in \eqref{bcd} converges and the series
    \[
        \sum_{T\in \Sym_2(\mathbb{Q})}\beta(T,\varphi_f,v) q^T
    \]
    converges absolutely.
\end{proposition}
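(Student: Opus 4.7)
The plan is to unfold the inner integral over $M$, apply Corollary \ref{cor:dcc} to each $\Gamma$-orbit to obtain a closed-form expression for $\beta(T,\varphi_f,v)$, and then use the $K_0$-decay estimate \eqref{mbfi} to deduce absolute convergence of the $q$-series.

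First I would handle the unfolding. If $X_1,X_2 \in V(\mathbb{Q})$ are linearly dependent, then $\psi^0(X) = 0$ by the signature-$(1,1)$ analogue of Lemma \ref{lem:bpl}(c) and $\det T = 0$, so such $X$ contribute nothing; otherwise $X_1,X_2$ form a basis of $V(\mathbb{Q})$, $\det T \neq 0$, and the stabilizer $\Gamma_X$ is trivial since any element of $\SO(V)$ fixing a nonzero vector in the two-dimensional quadratic space $V$ is the identity. Grouping the sum defining $\Theta$ by $\Gamma$-orbits and using $\Gamma$-invariance of $\varphi_f$, the standard unfolding gives
\[
    \int_M \Theta(T,\varphi_f,\psi^0,vt) = \sum_{[X]} \varphi_f(X)\int_M \alpha(Xa\sqrt{t},\psi^0),
\]
where $[X]$ runs over $\Gamma$-orbits of $X\in V(\mathbb{Q})^2$ with $Q(X)=T$ and $\varphi_f(X)\neq 0$. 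Since $\varphi_f$ is supported in a lattice and $V(\mathbb{Q})$ is anisotropic, the classical finiteness of $\Gamma$-orbits of lattice vectors with a given nonzero indefinite Gram matrix (equivalently, the finite class number of the associated order in the real quadratic field attached to $V$) makes the orbit sum finite.

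Next, $Q(Xa) = \tp{a}Ta$ has trace $\tr(Tv)$, determinant $\det(T)\det(v)$, and discriminant $\Delta(Q(Xa)) = \sqrt{\tr(Tv)^2 + 4|\det T|\det v}$, all depending only on $(T,v)$; and $\varepsilon(Xa) = \varepsilon(X)$ since $a$ can be chosen with $\det a > 0$. Applying Corollary \ref{cor:dcc} to each orbit (with $X$ there replaced by $Xa$) and swapping the finite orbit sum with the outer $t$-integral yields
\[
    \beta(T,\varphi_f,v) = \left(\sum_{[X]} \varepsilon(X)\,\varphi_f(X)\right) W^\ast\bigl(\tr(Tv),\,4|\det T|\det v\bigr),
\]
which also establishes the convergence of the outer $t$-integral.

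For absolute convergence of $\sum_T \beta(T,\varphi_f,v)\,q^T$, I would insert the bound $K_0(x) < \sqrt{\pi/(2x)}\,e^{-x}$ from \eqref{mbfi} into the definition of $W^\ast$. Writing $\Delta := \sqrt{x_1^2+x_2}$, this gives
\[
    |W^\ast(x_1,x_2)|\,e^{-2\pi x_1} \ll \frac{\sqrt{x_2}}{\Delta^{1/2}(\Delta-x_1)}\, e^{-2\pi\Delta}.
\]
Substituting $(x_1,x_2) = (\tr(Tv),\,4|\det T|\det v)$ and using the identity $\Delta(T_v)^2 = \|T_v\|_F^2 + 2|\det T_v|$ for symmetric indefinite $T_v$ (easily verified by diagonalization, since $\Delta^2 = \tr^2 + 4|\det|$ and $\|\cdot\|_F^2 = \tr^2 + 2|\det|$ for symmetric indefinite matrices), one obtains $\Delta(T_v) \gg_v \|T\|$. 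The resulting exponential decay in $\|T\|$ dominates the polynomial growth coming from the $\ll_\varepsilon|\det T|^{1/2+\varepsilon}$ class-number bound on the number of contributing orbits and from the lattice count of $T\in\Sym_2(\mathbb{Q})$, establishing absolute convergence. The main obstacle is the finiteness of the orbit sum in the unfolding step; once that is in hand via classical reduction theory for indefinite binary forms, the remaining estimates follow by direct computation.
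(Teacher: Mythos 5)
Your proposal follows essentially the same route as the paper: unfolding plus Corollary \ref{cor:dcc} yields exactly the paper's identity $\beta(T,\varphi_f,v)=\rho(T,\varphi_f)\,W^\ast(\tr(Tv),4\abs{\det(Tv)})$ (equation \eqref{ebf}), and convergence then follows from the $K_0$-decay bound on $W^\ast$ together with the lattice-point count $\#\{T:\Delta(\tp{a}Ta)<R\}\ll R^3$. The one step you defer to ``classical reduction theory'' --- finiteness and a polynomial bound for the orbit count $\rho(T,\varphi_f)$ --- is precisely where the paper invests its effort (Lemmas \ref{lem:brt} and \ref{lem:rnb}, proving the sharper bound $\ll\abs{\det T}^{\varepsilon}$ via ideal counting and reduction of indefinite binary forms, which is later needed for Theorem \ref{thm:mtc}); your asserted $\abs{\det T}^{1/2+\varepsilon}$ is not justified as stated, but for this proposition any bound polynomial in the entries of $T$ (e.g.\ the divisor bound $\ll\abs{t_1}^{\varepsilon}$ from counting ideals of norm $t_1$, combined with $\abs{t_i}\le\Delta(\tp{a}Ta)$ up to constants depending on $v$) suffices, since the exponential decay absorbs it.
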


To prove Proposition \ref{prp:gsp}, we need an arithmetic input. This takes the form of bounds on certain "indefinite" representation numbers, which we define shortly. First we prove a preliminary result about real quadratic number fields.

\begin{lemma}\label{lem:brt}
    Let $F$ be a real quadratic number field, and let $\mathcal{O}\subset F$ be an order in $F$. Let $x\mapsto x'$ be the non-trivial automorphism of $F$ and let $\Nm(x)=x x'$ and $\Tr(x)=x+x'$ be the norm and trace maps, respectively. Let $I\subset F$ be a fractional ideal of $\mathcal{O}$. Then
    \begin{enumerate}[label=(\roman*)]
        \item for every $\eta\in F$ and every $\varepsilon>0$
        \[
            \# \frac{\{\alpha\in I +\eta: \Nm(\alpha) = t \}}{\{ x\in \mathcal{O}^\times : \Nm(x) = 1 , x(I + \eta)\subset I + \eta \}} \ll |t|^\varepsilon, \hspace{1cm} t\in \mathbb{Q}.
        \]
        \item for every $\eta=(\eta_1,\eta_2)\in F^2$ and every $\varepsilon>0$
        \begin{align*}
            \#\frac{\{\alpha =(\alpha_1,\alpha_2)\in I^2 +\eta: \Nm(\alpha_i) = t_i \;\text{for}\; i=1,2\;\text{and}\; \Tr(\alpha_1\alpha_2') = t_0 \}}{\{x\in \mathcal{O}^\times : \Nm(x)=1 : x(I^2+\eta)\subset I^2+\eta \}} \\ \ll |t_0^2-4t_1t_2|^\varepsilon, \hspace{1cm} t_0,t_1,t_2\in \mathbb{Q}.
        \end{align*}
    \end{enumerate}
\end{lemma}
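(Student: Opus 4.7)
The plan is as follows. For part (i), I would pass from the coset $I + \eta$ to the containing fractional $\mathcal{O}$-ideal $J := I + \mathcal{O}\eta$. Any $\alpha \in I + \eta$ with $\Nm(\alpha) = t$ generates a principal ideal $(\alpha) \subset J$, so $\mathfrak{a} := J^{-1}(\alpha)$ is an integral ideal of $\mathcal{O}$ of norm $|t|/\Nm(J)$. By the standard divisor bound in an order---the number of integral ideals of a given norm $n$ is $O(n^\varepsilon)$---there are $O(|t|^\varepsilon)$ such $\mathfrak{a}$, and two generators of the same principal ideal differ by an element of $\mathcal{O}^\times$. The stated stabilizer has finite index in $\mathcal{O}^\times$, since any $u \in \mathcal{O}^\times$ with $\Nm(u) = 1$ and $u \equiv 1 \pmod{N\mathcal{O}}$ belongs to it (taking $N$ to be any positive integer with $N\eta \in I$), so replacing $\mathcal{O}^\times$ by the stabilizer only changes the count by a bounded factor.

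For part (ii), my key idea would be to exploit the invariant $\beta := \alpha_1 \alpha_2' \in F$. One checks directly that $\beta$ is fixed by every norm-one unit (since $(u\alpha_1)(u\alpha_2)' = u u' \alpha_1 \alpha_2' = \beta$), and that $\Tr(\beta) = t_0$, $\Nm(\beta) = t_1 t_2$. Hence $\beta$ is a root of $X^2 - t_0 X + t_1 t_2 \in \mathbb{Q}[X]$, taking at most two values in $F$. For each fixed $\beta$, the relation $\alpha_2 = \beta'/\alpha_1'$ determines $\alpha_2$ from $\alpha_1$, so the problem reduces to counting orbits of admissible $\alpha_1$ with $(\alpha_1) \mid (\beta)$ and $\Nm(\alpha_1) = t_1$.

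To obtain the refined bound $|d|^\varepsilon$ rather than the naive $|t_1|^\varepsilon$ that one gets from part (i), I would factor $(\beta) = \mathfrak{c} \cdot \mathfrak{b}$ where $\mathfrak{c} := (\beta, \beta')$. The identity $\beta - \beta' = \pm \sqrt{d}$ shows $\mathfrak{c} \mid (\sqrt{d})$, hence $\Nm(\mathfrak{c}) \le |d|$ and $\mathfrak{c}$ has $O(|d|^\varepsilon)$ divisors. By construction $\mathfrak{b}$ is coprime to its Galois conjugate $\mathfrak{b}'$, so $\mathfrak{b}$ is supported on split primes in a Galois-asymmetric way: if $\mathfrak{p}$ appears in $\mathfrak{b}$ then $\mathfrak{p}'$ does not, and hence the rational primes below distinct primes in the support of $\mathfrak{b}$ are pairwise distinct. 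It follows that the norm map is \emph{injective} on divisors of $\mathfrak{b}$: for any integer $m$, there is at most one divisor of $\mathfrak{b}$ of norm $m$. Any divisor $\mathfrak{a} \mid (\beta)$ decomposes uniquely as $\mathfrak{a}_c \mathfrak{a}_b$ with $\mathfrak{a}_c \mid \mathfrak{c}$ and $\mathfrak{a}_b \mid \mathfrak{b}$; fixing $\mathfrak{a}_c$, the condition $\Nm(\mathfrak{a}_b) = t_1/\Nm(\mathfrak{a}_c)$ determines $\mathfrak{a}_b$ uniquely when it exists, so summing over $\mathfrak{a}_c$ produces the bound $O(\Nm(\mathfrak{c})^\varepsilon) = O(|d|^\varepsilon)$.

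The main obstacle I anticipate is this last combinatorial step, which rests on the Galois asymmetry of $\mathfrak{b}$ arising from the definition of $\mathfrak{c}$ as the ideal-theoretic GCD. Once this is in place, the remaining work is routine: replacing $\mathcal{O}^\times$ by the stated stabilizer of $I^2 + \eta$ (which again has finite index, via the congruence argument from part (i)), handling the degenerate case when $d$ is a square in $\mathbb{Q}$ (so $\beta \in \mathbb{Q}$ and the bookkeeping simplifies), and verifying that principal divisors $\mathfrak{a}$ with admissible generators $\alpha_1 \in I + \eta_1$ are in bounded-to-one correspondence with $U$-orbits of pairs.
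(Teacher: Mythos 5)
Your proposal is correct, and while part (i) matches the paper's argument in substance, your part (ii) takes a genuinely different route. The paper first shows that the projection $\alpha\mapsto\alpha_1$ has fibers of size at most $2$ (via the quadratic equation $\alpha_1'\alpha_2^2-t_0\alpha_2+t_2\alpha_1=0$), which combined with part (i) gives a bound in terms of one diagonal entry, and then invokes the reduction theory of indefinite binary quadratic forms under $\SL_2(\mathbb{Z})$ to arrange $|t_2|<\sqrt{t_0^2-4t_1t_2}$. You instead observe that $\beta=\alpha_1\alpha_2'$ is invariant under the diagonal norm-one unit action, takes at most two values as a root of $X^2-t_0X+t_1t_2$, and determines $\alpha_2$ from $\alpha_1$; the count then reduces to divisors of $(\beta)$ of norm $|t_1|$, which you control by splitting off $\mathfrak{c}=(\beta,\beta')\mid(\sqrt{d})$ and using that the cofactor $\mathfrak{b}$ is coprime to $\mathfrak{b}'$, so the norm is injective on its divisors. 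Both arguments are sound; yours avoids reduction theory entirely and works directly with the arithmetic of $F$, at the cost of the extra $\gcd$ combinatorics, while the paper's reuses part (i) as a black box. Two small points to tighten: your decomposition $\mathfrak{a}=\mathfrak{a}_c\mathfrak{a}_b$ need not be unique when $\mathfrak{c}$ and $\mathfrak{b}$ share primes, but only existence is needed for the upper bound; and in a non-maximal order the ideal $J=I+\mathcal{O}\eta$ may fail to be invertible and unique factorization of ideals fails, so you should first scale by an integer $m$ to land in $\mathcal{O}_F$ (exactly as the paper does in part (i)) before running the ideal-theoretic arguments in either part. The degenerate cases you flag (square or zero discriminant) are in fact vacuous here since $\Nm$ is anisotropic on $F$.
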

Here both quotients are orbit spaces.
\begin{proof}
(i) For some integer $m$ we have $mI\subset \mathcal{O}_F$ and $m\eta\in \mathcal{O}_F$. Then multiplication by $m$ provides an injection
\[
    \{\alpha\in I +\eta: \Nm(\alpha) = t \}\hookrightarrow \{\alpha\in \mathcal{O}_F : \Nm(\alpha) =m^2 t\}.
\]
As the group $\{ x\in \mathcal{O}^\times : \Nm(x) = 1 , x(I + \eta)\subset I + \eta \}$ is of finite index in $\mathcal{O}_F^\times$, this shows that 
\[
    \# \frac{\{\alpha\in I +\eta: \Nm(\alpha) = t \}}{\{ x\in \mathcal{O}^\times : \Nm(x) = 1 , x(I + \eta)\subset I + \eta \}} \ll \#\{\text{ideals in $\mathcal{O}_F$ of norm $m^2t$}\}.
\]
The number of ideals in $\mathcal{O}_F$ of norm $x$ is $O( x^\varepsilon)$, hence we conclude.

(ii) First we prove that the size of the orbit space in question is  $O(|t_2|)^\varepsilon$. We claim that for fixed $t_0,t_1,t_2$ then the map
\begin{align*}
    \frac{\{\alpha =(\alpha_1,\alpha_2)\in I^2 +\mu: \Nm(\alpha_i) = t_i \;\text{for}\; i=1,2\; \text{and} \Tr(\alpha_1\alpha_2') = t_0\}}{\{ x\in \mathcal{O}^\times : \Nm(x) = 1 , x(I^2 + \eta)\subset I^2 + \eta \}} \\
    \to \frac{\{\alpha\in I +\eta_1: \Nm(\alpha) = t_1 \}}{\{ x\in \mathcal{O}^\times : \Nm(x) = 1 , x(I^2 + \eta)\subset I^2 + \eta \}},
\end{align*}
given by $\alpha\mapsto \alpha_1$, has fibers with at most two elements. Or equivalently, given $\alpha_1\in I+\eta_1$ with $\Nm(\alpha_1)=t_1$ there exists at most two elements $\alpha_2\in I+\eta_2$ such that
\[
    \alpha_2\alpha_2' = t_2\qquad \text{and}\qquad\alpha_1\alpha_2' + \alpha_2\alpha_1' = t_0
\]
Indeed, the two equations combine to show that $\alpha_2$ is a solution of the the quadratic equation $\alpha_1' \alpha_2^2 -t_0\alpha_2 +\alpha_1 t_2 = 0$.

The claimed estimate now follows since the group $\{ x\in \mathcal{O}^\times : \Nm(x) = 1 , x(I^2 + \eta)\subset I^2 + \eta \}$ is of finite index in the group $\{x\in \mathcal{O}^\times : \Nm(x) = 1 , x(I^2 + \eta_1)\subset I^2 + \eta_1\}$.

To obtain the bound by $|t_0^2-4t_1t_2|^\varepsilon$, observe that the conditions $\Nm(\alpha_i)=t_i$ and $\Tr(\alpha_1\alpha_2')=t_0$ are equivalent to $\Nm(\alpha_1 x+\alpha_2y) = t(x,y)$, where $t$ is the (rational) binary quadratic form $t(x,y) = t_1x^2+t_0xy+t_2y^2$. In particular $t$ must be indefinite, otherwise the set in question will be empty. For $(\begin{smallmatrix}
    a & b \\ c & d
\end{smallmatrix})\in \SL_2(\mathbb{Z})$ the map 
\[
    \alpha\mapsto \tilde\alpha = (\alpha_1 a+\alpha_2 c, \alpha_1 b + \alpha_2 d)
\]
provides a bijection between $\alpha\in I^2+\eta$ satisfying $\Nm(\alpha_1x+\alpha_2y) = t(x,y)$ and $\beta\in I^2+\tilde\eta$ satisfying $\Nm(\beta_1x+\beta_2y)= t(ax+by,cx+dy)$. Also, the condition $x(I^2+\eta)\subset I^2+\eta$ is equivalent to $x(I^2+\tilde\eta)\subset I^2+\tilde\eta$. By the reduction theory of indefinite binary quadratic forms we may therefore assume that $|t_2| < \sqrt{t_0^2-4t_1t_2}$, which completes the proof.
\end{proof}

Recall the function $\varepsilon(X)$ defined by \eqref{ebd} for a basis $X\in V(\mathbb{R})^2$. We extend it to all of $V(\mathbb{R})^2$ by setting $\varepsilon(X)=0$, when $X$ is linearly dependent. We then let
\[
        \rho(T,\varphi_f) := \sum_{\substack{X\in V(\mathbb{Q})^2 / \Gamma \\ Q(X) = T}} \varphi_f(X)\varepsilon(X).
\]
Using Lemma \ref{lem:brt}, we can prove that this is finite and give an explicit bound.
\begin{lemma}\label{lem:rnb}
    For every $\varphi_f$ and every $\varepsilon>0$
    \[
        |\rho(T,\varphi_f)| \ll |\det T|^\varepsilon.
    \]
\end{lemma}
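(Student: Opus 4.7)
The plan is to reduce the estimate to Lemma \ref{lem:brt}(ii) by identifying $V$ with a real quadratic field carrying a scaled norm form. Since $V_{\mathbb{R}}$ has signature $(1,1)$ and $V(\mathbb{Q})$ is anisotropic, the discriminant $d := -\det Q$ is a positive non-square in $\mathbb{Q}^{\times}/(\mathbb{Q}^{\times})^{2}$, and $V(\mathbb{Q})$ is isomorphic as a quadratic space to $(F, c\cdot \Nm)$ for the real quadratic field $F = \mathbb{Q}(\sqrt{d})$ and some $c \in \mathbb{Q}^{\times}$. Under this identification $\SO(V)(\mathbb{Q})$ becomes the norm-one subgroup $F^{(1)}\subset F^{\times}$ acting by multiplication, and $\Gamma$ becomes an infinite cyclic subgroup of $F^{(1)}$. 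For $X = (\alpha_{1},\alpha_{2})\in F^{2}$ the entries of $T = Q(X)$ read
\[
    t_{1} = c\Nm(\alpha_{1}),\qquad t_{2} = c\Nm(\alpha_{2}),\qquad t_{0} = c\Tr(\alpha_{1}\alpha_{2}'),
\]
so $t_{0}^{2} - 4t_{1}t_{2} = -4c^{2}\det(T)/c^{2} \cdot (\text{constant})$; more cleanly, $|t_{0}^{2}-4t_{1}t_{2}| = 4|\det T|$ up to the universal constant depending only on $c$.

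Next I would localise the Schwartz datum. Because $\varphi_{f}$ is locally constant and compactly supported on $V(\mathbb{A}_{f})^{2}$, we may fix a $\mathbb{Z}$-order $\mathcal{O}\subset F$, a fractional $\mathcal{O}$-ideal $I$, and finitely many translates $\eta_{1},\ldots,\eta_{n}\in F^{2}$ so that $\varphi_{f}$ is bounded pointwise by a constant multiple of $\sum_{k=1}^{n}\mathbf{1}_{I^{2}+\eta_{k}}$. After replacing $\mathcal{O}$ by a suitable suborder, each coset $I^{2}+\eta_{k}$ is stabilised by $\Gamma$ (using that $\Gamma$ fixes $\varphi_{f}$), and $\Gamma$ sits as a finite-index subgroup of
\[
    \{x\in \mathcal{O}^{\times}:\Nm(x)=1,\ x(I^{2}+\eta_{k})\subset I^{2}+\eta_{k}\},
\]
with index bounded by a constant depending only on $\mathcal{O}$, $I$ and $\Gamma$ (Dirichlet's unit theorem ensures this stabiliser is itself infinite cyclic up to torsion).

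Putting these pieces together, for each coset we have
\[
    \Bigl|\sum_{\substack{X\in (I^{2}+\eta_{k})/\Gamma \\ Q(X)=T}} \varepsilon(X)\Bigr| \le [\,\Gamma_{k} : \Gamma\,] \cdot \#\frac{\{\alpha\in I^{2}+\eta_{k}:\Nm(\alpha_{i})=t_{i}/c,\ \Tr(\alpha_{1}\alpha_{2}')=t_{0}/c\}}{\Gamma_{k}},
\]
where $\Gamma_{k}$ denotes the full stabiliser appearing in Lemma \ref{lem:brt}(ii) and I have used $|\varepsilon(X)|\leq 1$. Applying Lemma \ref{lem:brt}(ii) to each coset, the right-hand side is $\ll |t_{0}^{2}-4t_{1}t_{2}|^{\varepsilon} \ll |\det T|^{\varepsilon}$. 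Summing over the finitely many cosets completes the estimate.

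The only slightly delicate point is ensuring that $\Gamma$ is commensurable with the coset stabilisers $\Gamma_{k}$ appearing in Lemma \ref{lem:brt}(ii); this is the main place where the hypothesis that $\varphi_{f}$ is $\Gamma$-invariant, together with Dirichlet's unit theorem for $F$, is used.
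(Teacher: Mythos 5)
Your proposal is correct and follows essentially the same route as the paper: both reduce to indicator functions of lattice cosets, identify the anisotropic signature-$(1,1)$ space with a scaled norm form of a real quadratic field so that $\Gamma$ becomes a finite-index subgroup of the coset-stabilizing norm-one units, and then invoke Lemma \ref{lem:brt}(ii) together with $|\varepsilon(X)|\le 1$ and $|t_0^2-4t_1t_2|\asymp|\det T|$. The only cosmetic difference is that the paper normalizes $\gcd\{Q(x):x\in L\}=1$ and realizes $(L,Q)$ as $(I,N(I)^{-1}\Nm)$ via a cited classification, whereas you absorb the scaling into a constant $c$.
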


\begin{proof}
    It suffices to prove the lemma when $\varphi_f$ is the indicator function of $L^2+\eta$ for some integral lattice $L\subset V$ and $\eta=(\eta_1,\eta_2)\in V(\mathbb{Q})^2$. Furthermore we may assume that $\gcd\{Q(x):x\in L\}=1$. Then
    \[
        |\rho(T,\varphi_f)| \leq \sum_{\substack{X\in V(\mathbb{Q})^2 / \Gamma \\ Q(X) = T }} |\varphi_f(X)| = \# (\{\;X\in L^2 + \eta : Q(X) = T\;\} /\Gamma).
    \]
    Let $\mathcal{O}$ be the unique quadratic order of discriminant equal to the discriminant of $L$. By \cite[p. 82]{arakawa2014bernoulli} there exists an isometry between $(L,Q)$ and $(I,N(I)^{-1}\Nm)$, for some ideal $I\subset \mathcal{O}$. This identifies the isometry group of $(V,Q)$ with $F^\times$, and hence $\Gamma$ with a finite subgroup of $\{ x\in \mathcal{O}^\times : \Nm(x) = 1 , x(I + \eta)\subset I + \eta \}$. Thus the present lemma now follows from Lemma \ref{lem:brt}.
\end{proof}

\begin{proof}[Proof of Proposition \ref{prp:gsp}]

We make use of the short-hand notation $T^a = \tp{a} T a$, for $a\in \GL^+_2(\mathbb{R})$. Note that $Q(Xa) = Q(X)^a$. We fix $a$ such that $v= a\tp{a}$.

We have $\beta(T,\varphi_f,v)=0$ if $T$ is not indefinite, and if $T$ is indefinite then by  Corollary \ref{cor:dcc}
\begin{equation}\label{ebf}
    \beta(T,\varphi_f,v) = \rho(T,\varphi_f) W^\ast(\tr(Tv),4 |\det(Tv)|)
\end{equation}
We have $\{Q(X):\varphi_f(X)\neq 0\}\Sym_2(\mathbb{Z}) \subset \frac{1}{N}\Sym_2(\mathbb{Z})$ for some positive integer $N$. Since $|q^T| = e^{-2\pi \tr T^a}$, Lemma \ref{lem:rnb} leaves us to prove that
\begin{equation}\label{ueb}
    \sum_{\substack{T\in \frac{1}{N}\Sym_2(\mathbb{Z})^a \\ \text{indefinite}}} |\det T|^{\varepsilon}W^\ast(\tr(T),4 |\det(T)|) e^{ -2\pi \tr (T)}
\end{equation}
converges. 

We now use the following estimate which is easily obtained from \eqref{mbfi} 
\begin{equation}\label{Wfe}
    W^\ast(x_1,x_2) \ll \exp(- 2\pi \left(\sqrt{x_1^2 + x_2}-x_1\right)).
\end{equation}
This gives a bound by
\[
    \sum_{\substack{T\in \frac{1}{N}\Sym_2(\mathbb{Z})^a \\ \text{indefinite}}} \abs{\det T}^\varepsilon e^{ -2\pi \Delta(T)}.
\]
For indefinite $T=(\begin{smallmatrix} t_1 & t_0  \\ t_0  & t_2 \end{smallmatrix})\in \Sym_2(\mathbb{R})$ we have $|t_0|,|t_1|,|t_2|\leq \Delta(T)$. As $\frac{1}{N}\Sym_2(\mathbb{Z})^a$ is a lattice inside $\Sym_2(\mathbb{R})$, we infer that 
$
    \# \{T\in \frac{1}{N}\Sym_2(\mathbb{Z})^a : \Delta(T) < R\} \ll R^3.
$
Thus we reduce to the fact that $\sum_{n=1}^\infty r^{n} <\infty$ for any $0<r<1$.
\end{proof}


\subsection{Proof of Theorem \ref{thm:mta}}\label{sec:tap}

First we show that Theorem \ref{thm:mta}(i) follows from Theorem \ref{thm:mta}(ii). Analogously to \eqref{qav} we let 
\[
    \Theta(T,\varphi_f,\varphi^0,v) = \sum_{\substack{X\in V(\mathbb{Q})^2 \\ Q(X) = T }} \varphi_f(X) \varphi^0(Xa).
\]
By Lemma \ref{lem:xai} we have
\[
    \lim_{t\to\infty}\int_s \Theta(T,\varphi_f,\varphi^0,vt) =  \iota(T,\varphi_f;s) 
\]
for any $v$. Let $c=\partial s$. By Lemma \ref{lem:bpl}(e) and Stokes theorem we have
\[
    \int_s \Theta(T,\varphi_f,\varphi^0,v) = \int_s \Theta(T,\varphi_f,\varphi^0,v t') + \int_c \int_1^{t'} \Theta(T,\varphi_f,\psi^0,v t) \frac{dt}{t}.
\]
We can switch the integrals over $c$ and $t$ by compactness. Hence, taking $t'\to\infty $ and assuming Theorem \ref{thm:mta}(ii) we have
\[
    I(\tau,\varphi_f;s) = \sum_{T\in \Sym_2(\mathbb{Q})}\int_s\Theta(T,\varphi_f,\varphi^0,v) q^T - G(\tau,\varphi_f;c)
\]
where the first series converges absolutely by lemma \ref{lem:nlc} applied to the lattice $\Lambda a$ and by the compactness of $S$.

Now we turn to the proof of Theorem \ref{thm:mta}(ii), and we begin with a particular but central special case. Fix a subspace $U\subset V$ with $U(\mathbb{R})$ oriented and of signature $(1,1)$ and let $V' = U^\perp$.

\begin{lemma}\label{lem:rdl}
    Suppose that $\varphi_f=\varphi_f'\otimes \varphi_f''$ for Schwartz functions $\varphi_f''\in \mathscr{S}(U(\mathbb{A}_f)^2)$ and $\varphi_f'\in \mathscr{S}(V'(\mathbb{A}_f)^2)$ and that $V'$ is anisotropic. Let $\Gamma'\subset \SO(V')$ be an infinite cyclic subgroup which fixes $\varphi_f'$. Then Theorem \ref{thm:mta}(ii) holds for $c=c(U)$ and we have
    \begin{equation}\label{tsd}
        G(\tau,\varphi_f;c) = \frac{1}{[\Gamma_U:\Gamma']}\sum_{X\in U(\mathbb{Q})^2} \varphi_f''(X) q^{Q(X)} \sum_{T\in \Sym_2(\mathbb{Q})} \beta(T,\varphi_f',v) q^T.
    \end{equation}
\end{lemma}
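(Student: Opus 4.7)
The plan is to exploit the orthogonal decomposition $V = U \oplus V'$ in order to reduce the integral on $c(U)$ to Proposition \ref{prp:gsp} applied to the indefinite space $V'$. First, write each $X \in V(\mathbb{Q})^2$ uniquely as $X = X'' + X'$ with $X'' \in U(\mathbb{Q})^2$ and $X' \in V'(\mathbb{Q})^2$, so that $Q(X) = Q(X'') + Q(X')$ and $\varphi_f(X) = \varphi_f''(X'')\varphi_f'(X')$ by the product assumption.

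The key geometric observation is that $\psi^0(Xa)|_{c_U}$ depends only on the $V'$-component $X'$. For any $z \in c_U$ the vector $X(z)$ lies in $V'$ by the definition of $c_U$, so $u(X''_i, z) = (X''_i, X(z)) = 0$, and since $T_z c_U \subseteq V'$ and $X''_i \in U$ we similarly get $du(X''_i)|_{T_z c_U} = 0$. The defining formula for $\psi^0$ then gives $\psi^0(Xa\sqrt{t})|_{c_U} = \psi^0(X'a\sqrt{t})|_{c_U}$ for any $a \in \GL_2^+(\mathbb{R})$.

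Given this identity, I would split $\Theta(T, \varphi_f, \psi^0, vt)$ on $c$ as an iterated sum over $X''$ and $X'$ with $Q(X'') + Q(X') = T$. Since the integrand is $\Gamma_U$-invariant and $\Gamma' \subset \Gamma_U$ has finite index, replace $\int_c$ by $[\Gamma_U:\Gamma']^{-1}\int_{M'}$ with $M' = \Gamma' \backslash c_U$. Integrating in $t \in [1,\infty)$ and recognizing the inner factor as $\beta(T - Q(X''), \varphi_f', v)$ from \eqref{bcd} yields
\[
\beta(T, \varphi_f, v; c) = \frac{1}{[\Gamma_U:\Gamma']}\sum_{X'' \in U(\mathbb{Q})^2} \varphi_f''(X'') \beta(T - Q(X''), \varphi_f', v).
\]
Multiplying by $q^T$, summing over $T$, and re-indexing by $S = T - Q(X'')$ then produces \eqref{tsd}.

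The main obstacle is justifying the interchanges of sum and integral. I would do this by establishing absolute convergence of the right-hand side of \eqref{tsd}: the factor $\sum_{X''} \varphi_f''(X'') q^{Q(X'')}$ is a positive-definite theta series in the variable $\tau$ and thus converges absolutely on $\mathfrak{H}$, while $\sum_S \beta(S, \varphi_f', v) q^S$ converges absolutely by Proposition \ref{prp:gsp}. The resulting product bound dominates the iterated sums and integrals uniformly in $t \geq 1$, so Fubini--Tonelli applies, and the convergence of the outer integral in \eqref{bct} and of the generating series \eqref{ggs} both follow as consequences.
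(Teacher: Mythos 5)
Your proposal is correct and follows essentially the same route as the paper: restrict $\psi^0(Xa)$ to the geodesic, observe that it depends only on the $V'$-component of $X$, unfold $\int_c$ to $[\Gamma_U:\Gamma']^{-1}\int_{M'}$, and identify the result as the product of the positive definite theta series on $U$ with the series of Proposition \ref{prp:gsp} for $(V',\varphi_f',\Gamma')$. The only difference is that you spell out why $u(X''_i)$ and $du(X''_i)$ vanish along $c_U$, a step the paper merely asserts.
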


\begin{proof}
For extra clarity: the first sum on the right of \eqref{tsd} is the theta function of a positive definite quadratic space and hence converges absolutely, and the second sum is the one from Proposition \ref{prp:gsp} with the data $(V',\varphi_f',\Gamma')$.

Let $\mathcal{H}'\subset \mathcal{H}$ be the oriented hyperbolic line corresponding to $U(\mathbb{R})$ which we also identify with a connected component of the Grassmannian of negative oriented lines in $V'(\mathbb{R})$, and let $M'=\Gamma'\backslash \mathcal{H}'$. 

The restriction of $\psi^0(Xa)$ to $\mathcal{H}'$ is given by $\psi'^0(X'a)$ where $X'$ is the component of $X$ in $V'$ and $\psi'^0(X'a)$ denotes the form on $\mathcal{H}'$ defined by the same formula as for $\varphi^0(X)$ but for $V'_{\mathbb{R}}$. It follows that 
\[
    \Theta(T,\varphi_f,\psi^0,v)|_{\mathcal{H}'} = \sum_{X''\in U(\mathbb{Q})^2} \left(\varphi_f''(X'') \sum_{\substack{X'\in V'(\mathbb{Q})^2 \\ Q(X') = T - Q(X'')}} \varphi_f'(X') \psi'^0(X'a) \right).
\]
The inner sum is $\Gamma'$-invariant and hence may be identified with a form on $M'$. Integrating both sides and collecting terms we get
\begin{align*}
    \int_c\Theta(T,\varphi_f,\psi^0,v) &= \frac{1}{[\Gamma_U:\Gamma']}\int_{M'}\Theta(T,\varphi_f,\psi^0,v) \\ &= \frac{1}{[\Gamma_U:\Gamma']}\sum_{\substack{T',T''\in \Sym_2(\mathbb{Q}) \\ T = T' + T''}} r(T'',\varphi_f'') \int_{M'}\Theta'(T',\varphi_f',\psi'^0,v) 
\end{align*}
where for any $T\in \Sym_2(\mathbb{Q})$
\begin{align*}
    r(T,\varphi_f'') &:= \sum_{\substack{X\in U(\mathbb{Q})^2 \\ Q(X) = T}} \varphi_f''(X) \\
    \Theta'(T,\varphi_f',\psi'^0,v) &:= \sum_{X\in V'(\mathbb{Q})^2} \varphi_f'(X) \psi'^0(Xa).
\end{align*}
Replacing $v$ with $vt$ and integrating against $\frac{dt}{t}$ this yields 
\[
    \beta(T,\varphi_f,v;c) = \frac{1}{[\Gamma_U:\Gamma']}\sum_{\substack{T',T''\in \Sym_2(\mathbb{Q}) \\ T = T' + T''}} r(T'',\varphi_f'') \beta(T',\varphi_f',v),
\]
which is the $T$th coefficient of the right-hand side of \eqref{tsd}.
\end{proof}

Now to prove Theorem \ref{thm:mta}(ii), it is obviously sufficient to consider the case where $C$ has one component, so suppose $c=c(U)$. We may write $\varphi_f$ as a linear combination of tensor products $\varphi_f'\otimes\varphi_f''$ where $\varphi_f'\in \mathscr{S}(U^\perp(\mathbb{A}_f)^2)$ and $\varphi_f''\in \mathscr{S}(U(\mathbb{A}_f)^2)$, and furthermore we may find a finite index subgroup $\Gamma'\subset \Gamma_U$ which fixes each $\varphi_f'$. Then Theorem \ref{thm:mta}(ii) holds for each of the series $G^\ast(\tau,\varphi_f'\otimes \varphi_f'';c)$ by Lemma \ref{lem:rdl}, hence Theorem \ref{thm:mta}(ii) also holds for their sum $G^\ast(\tau,\varphi_f;c)$.

\begin{remark}\label{rmk:egd}
Let $L\subset V$ be an integral lattice, as in Section \ref{sec:mts}. Let
\[
    L'' = L\cap U, \hspace{1cm} L' = L\cap U^\perp,
\]
and let $L_0=L''\oplus L'$. From the isomorphism $D_{L''}\oplus D_{L'}\cong D_{L_0}$ we have an isomorphism
\[
    \mathbb{C}[D_{L''}^2]\otimes \mathbb{C}[D_{L'}^2] \cong \mathbb{C}[D_{L_0}^2].
\]
Further, there is a linear map
\[
    \mathbb{C}[D_{L_0}^2]\to \mathbb{C}[D_L^2],\hspace{1cm} \mathfrak{e}_{\mu}\mapsto \sum_{\nu\in (L / L_0)^2} \mathfrak{e}_{\mu+\nu}.
\]
Both maps intertwine the Weil representations of $\Sp_4(\mathbb{Z})$, and so they give an intertwining operator $\iota: \rho_{L''}\otimes \rho_{L'}\to \rho_L$. If $\Gamma'\subset \Gamma_U$ is a finite index subgroup that acts trivially on  $D_{L'}$, then an inspection of the proof shows that
\[
    G^\ast_L(\tau;c) = \frac{1}{[\Gamma_U:\Gamma']}\iota(\Theta_{L''}(\tau)\otimes G_{L'}(\tau)),
\]
where $\Theta_{L''}(\tau)$ is the $\rho_{L''}$-valued theta function attached to the positive definite quadratic lattice $L''$, and $G^\ast_{L'}(\tau)$ is the $\rho_{L'}$-valued function defined by
\[
    G^\ast_{L'}(\tau )_\mu = \sum_{T\in \Sym_2(\mathbb{Q})} \beta(T,\varphi_f'(\mu),v)q^T .
\]
\end{remark}


\subsection{Proof of Theorem \ref{thm:mtb}} \label{sec:mwp}

If $R$ is one of the rings $\mathbb{Q}_p$, $\mathbb{R}$, $\mathbb{A}$, or $\mathbb{A}_f$,  let 
\[
    \omega:\Sp_4(R)\to \Aut(\mathscr{S}(V(R)^2))
\]
be the Weil representation attached to the standard additive character $\psi$ of $R$ \cite[\S 5]{MR1286835}, \cite{MR217026}. We give below the formulas for $\omega$, in this case, when $R$ is $\mathbb{Q}_p$ or $\mathbb{R}$. Since $\omega$ acts multiplicatively on tensor products of Schwartz functions, this is enough to determine $\omega$ over the adeles.

Let $F$ be one of the local fields $\mathbb{Q}_p$ or $\mathbb{R}$. Let $\abs{\cdot}$ denote the absolute value and let $(\;,\;)_F$ be the Hilbert symbol on $F$. Let $m(a) = (\begin{smallmatrix} a & \\ & \tp {a^{-1}} \end{smallmatrix})$ and $n(b)=(\begin{smallmatrix} 1 & b \\ & 1 \end{smallmatrix})$ for $a\in \GL_2(F)$ and $b\in \Sym_2(F)$. For $\varphi\in \mathscr{S}(V(F)^2)$ and $X\in V(F)^2$ we have the formulas 
\begin{align*}
    (\omega(m(a))\varphi)(X) = \;& (d(V),\det a)_F |\det a| \varphi(Xa)  \\
    (\omega(n(b))\varphi)(X) = \; & \psi(\tr(b Q(X)))\varphi(X) \\
    (\omega((\begin{smallmatrix} & - 1 \\ 1 & \end{smallmatrix} ))\varphi)(X) = \;& (-1,d(V))_F \int_{V(F)^2} \varphi(x)\psi( - \tr((X,x)))dx
\end{align*}
where $dx$ denotes the self-dual Haar measure with respect to the pairing $\psi(\tr((\;,\;)))$ on $V(F)^2$.

We further define the finite Weil representation $\rho_L$ of $\Sp_4(\mathbb{Z})$ on the group algebra $\mathbb{C}[(D_L)^2]$. We identify the dual space $\mathbb{C}[D_L^2]^\vee$ with the space of functions $D_L^2\to\mathbb{C}$, which in turn can be identified with the subspace of $\mathscr{S}(V(\mathbb{A}_f)^2)$ consisting of Schwartz functions which are supported on $(L^\#)^2\otimes \widehat{\mathbb{Z}}$ and constant on cosets of $L^2\otimes\widehat{\mathbb{Z}}$. The restriction of $\omega$ to $\Sp_4(\widehat{\mathbb{Z}})$ stabilizes this subspace, and hence defines a representation $\omega_L^\vee$ of $\Sp_4(\widehat{\mathbb{Z}})$ on $\mathbb{C}[(D_L)^2]^\vee$. We let $\omega_L$ be the dual representation of $\omega_L^\vee$, and let $\rho_L$ denote the restriction of $\omega_L$ to $\Sp_4(\mathbb{Z})$.

\begin{proof}[Proof of Theorem \ref{thm:mtb}]
First we construct a Schwartz form $\varphi_f\otimes \varphi_\infty$ on $V(\mathbb{A})^2$. Consider the finite Schwartz function
\[
    \varphi_f = \sum_{\mu\in D_L^2} \varphi_f(\mu) \otimes \mathfrak{e}_\mu \in \mathscr{S}(V(\mathbb{A}_f)^2)\otimes \mathbb{C}[D_L^2].
\]
It has level $\omega_L$ in the sense that
\begin{equation}\label{fwr}
    \omega(k)\varphi_f = \omega_L(k)^{-1} \varphi_f,\hspace{1cm} k\in \Sp_4(\widehat{\mathbb{Z}}).
\end{equation}
This is easily seen from the definions above, by evaluating the dual basis of $\mathfrak{e}_\mu$ on both sides. For $X\in V(\mathbb{R})^2$ let $\varphi_{\infty}(X) = e^{-2\pi \tr Q(X)} \varphi^0(X)$. By Remark \ref{rmk:kmr} $\varphi_\infty$ belongs to $ \mathscr{S}(V(\mathbb{R})^2)\widehat\otimes \Omega^2(\mathcal{H})$ and it has weight $2$ in the sense that
\begin{equation}\label{kmw}
    \omega(k(u))\varphi_{\infty} = \det(u)^2 \varphi_\infty\hspace{1cm} u\in U(2),
\end{equation}
where $k(u) := (\begin{smallmatrix}
    a & -b \\ b & a
\end{smallmatrix})$, for $u=a+ib\in U(2)$, denotes an element of the standard maximal compact subgroup of $\Sp_4(\mathbb{R})$.

Let $g\in \Sp_4(\mathbb{A})$. We consider now the theta series
\[
    \theta_L(g) = \sum_{X\in V(\mathbb{Q})^2} \omega(g)(\varphi_f\otimes \varphi_{\infty})(X)\in \mathbb{C}[(D_L)^2]\otimes \Omega^2(M_\Gamma).
\]
By Poisson summation, \eqref{fwr}, and \eqref{kmw}, it satisfies
\begin{align}\label{tfa}
    \theta_L(\gamma g) &= \theta_L(g)&\gamma\in \Sp_4(\mathbb{Q}),  \\ \label{tfl}
    \theta_L(gk) &= \omega_L(k)^{ - 1} \theta_L(g) &k\in \Sp_4(\widehat{\mathbb{Z}}), \\ \label{tfw}
    \theta_L(gk(u)) &= \det(u)^2 \theta_L(g) &u\in U(2).
\end{align}
Let $\tau=u+iv\in \mathfrak{H}$, let $a\in \GL^+_2(\mathbb{R})$ be any matrix satisfying $v = a\tp{a}$, and let $g_\tau = n(u) m(a)\in \Sp_4(\mathbb{R})$. Then by the formulas for $\omega$ we have
\[
    (\omega(g_\tau)\varphi_{\infty})(X) = (\det a)^{2} e^{2\pi i \tr(u Q(X))}\varphi_\infty(Xa) = \det (v)\varphi^0(Xa) q^{Q(X)}.
\]
Thus
\begin{equation}\label{kmg}
    \det(v)^{-1}\theta_L(g_\tau) =\sum_{T\in \Sym_2(\mathbb{Q})} \Theta(T,\varphi_f,\varphi^0,v) q^T.
\end{equation}
where the definition of $\Theta(T,\varphi_f,\varphi^0,v)$ from previously is extended linearly to the $\rho_L$-valued Schwartz function $\varphi_f$. Then by the same argument with the transgression formula as in the proof of Theorem \ref{thm:mta}(i) we have 
\[
    \det (v)^{ -1}\int_s\theta_L(g_\tau) = I_L(\tau;s) + G_L^\ast(\tau;c).
\]
It follows from \eqref{tfa}, \eqref{tfl}, and \eqref{tfw}, that the function  $\det (v)^{ -1}\theta_L(g_\tau)$ on $\mathfrak{H}$ is modular of weight $2$ and 
level $1$. Consequently $(\det v)^{ -1}\int_s\theta_L(g_\tau)$ is also modular of weight $2$ and level $1$.
\end{proof}


\subsection{Proof of Theorem \ref{thm:mtc}}

Throughout this section $T$ will be a positive definite matrix. To prove Theorem \ref{thm:mtc}, the main difficulty will consist of proving the following.

\begin{proposition}\label{prp:bbb}
    For any $\varphi_f\in \mathscr{S}(V(\mathbb{A}_f)^2)$ and $\varepsilon>0$ we have
    \[
        |\beta(T,\varphi_f,T^{ - 1};c)| \ll \det (T)^{3/2 + \varepsilon}.
    \]
\end{proposition}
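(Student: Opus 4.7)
The plan is to combine the convolution decomposition from Lemma \ref{lem:rdl} with the explicit Bessel formula from Corollary \ref{cor:dcc}, the representation-number bound of Lemma \ref{lem:rnb}, and the asymptotic estimate \eqref{Wfe}. The key point is that after the change of variables $w = T^{-1/2} T_1 T^{-1/2}$, the factor $\det(T)^{3/2}$ emerges as the Jacobian of conjugation on $\Sym_2(\mathbb{R})$, while the remaining integral over $w$ converges thanks to exponential decay coming from $W^\ast$.

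Concretely, write $c = c(U)$, and as in Section \ref{sec:tap} decompose $\varphi_f$ as a linear combination of tensor products $\varphi_f' \otimes \varphi_f''$, where $\varphi_f' \in \mathscr{S}(V'(\mathbb{A}_f)^2)$ with $V' = U^\perp$ anisotropic of signature $(1,1)$, and $\varphi_f'' \in \mathscr{S}(U(\mathbb{A}_f)^2)$. Lemma \ref{lem:rdl} then gives
\[
\beta(T,\varphi_f,T^{-1};c) = \frac{1}{[\Gamma_U : \Gamma']} \sum_{\substack{T_1 + T_2 = T \\ T_1\ \text{indef.},\ T_2 \geq 0}} r(T_2,\varphi_f'')\, \beta(T_1,\varphi_f',T^{-1}).
\]
By Corollary \ref{cor:dcc} and Lemma \ref{lem:rnb}, $|\beta(T_1,\varphi_f',T^{-1})| \ll |\det T_1|^\varepsilon\, W^\ast(\tr(T_1 T^{-1}), 4|\det(T_1 T^{-1})|)$. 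For the representation number, since $U$ has dimension $2$ and is positive definite, given $X_1$ with $Q(X_1) = (T_2)_{11}$, the two linear conditions $(X_1,X_2) = 2(T_2)_{12}$ and $Q(X_2) = (T_2)_{22}$ leave at most two possibilities for $X_2$; combined with the divisor bound $\#\{X_1 : Q(X_1) = n\} \ll n^\varepsilon$, this yields $r(T_2, \varphi_f'') \ll (\tr T_2)^\varepsilon$.

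Set $w = T^{-1/2} T_1 T^{-1/2}$, so $I - w = T^{-1/2} T_2 T^{-1/2}$. The constraints on $T_1, T_2$ translate to $w$ having eigenvalues $\lambda_1 \in (0,1]$ and $\lambda_2 < 0$. A direct computation gives $\sqrt{\tr(T_1T^{-1})^2 + 4|\det(T_1T^{-1})|} - \tr(T_1T^{-1}) = -2\lambda_2$, so \eqref{Wfe} yields
\[
W^\ast(\tr(T_1T^{-1}), 4|\det(T_1T^{-1})|) \ll e^{4\pi\lambda_2}.
\]
Since $\iota(T;s)$ depends only on the $\GL_2(\mathbb{Z})$-equivalence class of $T$, I may assume $T$ is Minkowski-reduced, so that $\tr T \ll \det T$. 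Then $|\det T_1|^\varepsilon \ll \det(T)^\varepsilon (1+|\lambda_2|)^\varepsilon$ and $(\tr T_2)^\varepsilon \ll \det(T)^\varepsilon (1+|\lambda_2|)^\varepsilon$, so each summand is bounded by $\det(T)^{O(\varepsilon)} (1+|\lambda_2|)^{O(\varepsilon)} e^{4\pi\lambda_2}$.

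Finally, all admissible $T_1$ lie in a fixed lattice $\Lambda \subset \Sym_2(\mathbb{Q})$ independent of $T$. The conjugation $S \mapsto T^{-1/2} S T^{-1/2}$ on $\Sym_2(\mathbb{R}) \cong \mathbb{R}^3$ has Jacobian $\det(T)^{-3/2}$, so the image $T^{-1/2} \Lambda T^{-1/2}$ has covolume $\det(T)^{-3/2}\,\mathrm{covol}(\Lambda)$, and any bounded region $B \subset \Sym_2(\mathbb{R})$ contains $\ll \det(T)^{3/2}\,\mathrm{vol}(B)$ such points. Partitioning the admissible $w$-region dyadically into slabs $\lambda_2 \in [-k-1,-k]$ for $k \geq 0$, parametrized by $(\lambda_1, \lambda_2)$ and a diagonalizing rotation on $S^1$, each slab has $\Sym_2(\mathbb{R})$-volume $O(k+1)$; hence it contains $\ll \det(T)^{3/2}(k+1)$ lattice points, each contributing $\ll \det(T)^{O(\varepsilon)}(k+1)^{O(\varepsilon)} e^{-4\pi k}$. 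Summing the resulting geometric series gives the desired bound $\det(T)^{3/2+\varepsilon}$. The main obstacle will be making this lattice-point count rigorous and uniform in $T$: one needs to justify the density estimate over regions that are not quite compact near the boundary $\lambda_1 = \lambda_2$ where the diagonalizing rotation degenerates, and to separately control the rank-deficient contributions (where $\det T_2 = 0$), which form a lower-dimensional subvariety and contribute lower-order error terms.
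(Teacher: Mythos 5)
Your opening moves coincide with the paper's: reduce to $\varphi_f=\varphi_f'\otimes\varphi_f''$, apply the convolution identity of Lemma \ref{lem:rdl} (i.e.\ \eqref{ebf}), bound the two representation numbers by $\varepsilon$-powers via Lemma \ref{lem:rnb}, and exploit the decay \eqref{Wfe} of $W^\ast$. Where you diverge is the endgame. The paper stays arithmetic: it parametrizes the admissible inner matrices $T'$ by $(t,d)$, injects the count $\mathcal{N}_T(t,d)$ into the representation numbers $r_T(A)$ of a positive definite binary form of discriminant $D\asymp\det T$ (Lemmas \ref{lem:ltd} and \ref{lem:nre}), and closes with the classical average $\sum_{A<x}r_T(A)\ll x/\sqrt{|D|}$, the exponent $3/2$ emerging as $|D|^2/\sqrt{|D|}$. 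You instead rescale by $T^{-1/2}$ and count lattice points of covolume $\asymp\det(T)^{-3/2}$ in eigenvalue slabs, extracting $\det(T)^{3/2}$ as a Jacobian. This is precisely the normalization the paper discusses in Remark \ref{rem:eeo}, where the author reports having tried (via Poisson summation) and abandoned this route; your proposal shows that an elementary lattice-point count can carry it through, at the cost of a reduction to Minkowski-reduced $T$ that the paper's $\SL_2(\mathbb{Z})$-invariant argument does not need. (Your justification of that reduction cites the invariance of $\iota(T;s)$, but the object here is $\beta$; the correct statement is $\beta(\tp{\gamma}T\gamma,\varphi_f,(\tp{\gamma}T\gamma)^{-1};c)=\beta(T,\varphi_f(\cdot\,\gamma),T^{-1};c)$ together with the finiteness of the orbit $\{\varphi_f(\cdot\,\gamma):\gamma\in\SL_2(\mathbb{Z})\}$.)

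The one genuinely flawed step is the counting claim itself: it is false that an arbitrary bounded region $B$ contains $\ll\det(T)^{3/2}\,\mathrm{vol}(B)$ points of $T^{-1/2}\Lambda T^{-1/2}$. That lattice is highly skewed — its shortest vector has length $\asymp 1/t_2$, which can be as small as $\asymp 1/\det T$ — so a thin tube along that direction contains far more points than volume times density. The repair is standard: for reduced $T\in\frac{1}{N}\Sym_2(\mathbb{Z})$ the smallest eigenvalue of $T$ is $\gg_N 1$, so the images of a basis of $\Lambda$ all have norm $O(1)$, all three successive minima of $T^{-1/2}\Lambda T^{-1/2}$ are $O(1)$, and Minkowski's second theorem gives $\ll R^3\det(T)^{3/2}$ points in any ball of radius $R\geq1$. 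Replacing your per-slab count $(k+1)\det(T)^{3/2}$ by the ball count $(k+1)^3\det(T)^{3/2}$ costs only polynomial factors in $k$, which the decay $e^{4\pi\lambda_2}\leq e^{-4\pi k}$ (your identity $\Delta(w)-\tr w=-2\lambda_2$ is correct) absorbs; there is then no need to fuss over the degeneration of the diagonalizing rotation at $\lambda_1=\lambda_2$ or over the rank-deficient $T_2$. With that substitution the argument closes and yields $\det(T)^{3/2+\varepsilon}$, so I would call this a correct alternative proof modulo one mis-stated counting lemma — one you yourself identified as the main obstacle.
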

Let us first see how Proposition \ref{prp:bbb} implies Theorem \ref{thm:mtc}. Let
\[
F_L(\tau;s) := I_L(\tau;s)+G_L^\ast(\tau;c).
\]

\begin{lemma}\label{lem:hbb}
    For every $\mu\in D_L^2$
    \[
        |F_L(\tau;s)_\mu| \ll \det (v)^{-1}.
    \]
\end{lemma}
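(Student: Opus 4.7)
The plan is to combine the integral representation $\det(v) F_L(\tau;s) = \int_s \theta_L(g_\tau)$ from the proof of Theorem~\ref{thm:mtb} with the modular transformation law. Since $F_L(\gamma\tau;s) = \det(c\tau+d)^2 \rho_L(\gamma) F_L(\tau;s)$ for $\gamma \in \Sp_4(\mathbb{Z})$ and $\rho_L$ is unitary, the function $\tau \mapsto \det(v)\nm{F_L(\tau;s)}$ is $\Sp_4(\mathbb{Z})$-invariant. As $|F_L(\tau;s)_\mu| \leq \nm{F_L(\tau;s)}$, it therefore suffices to bound $\det(v)\nm{F_L(\tau;s)}$ by a constant on a Siegel fundamental domain $\mathcal{F}$ for $\Sp_4(\mathbb{Z})$. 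On $\mathcal{F}$, writing $v = \bigl(\begin{smallmatrix} v_1 & v_0 \\ v_0 & v_2 \end{smallmatrix}\bigr)$, the Minkowski reduction conditions $v_1 \geq \sqrt{3}/2$, $v_1 \leq v_2$, $|v_0| \leq v_1/2$ hold.

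The identity $\det(v) F_L(\tau;s)_\mu = \int_s \theta_L(g_\tau)_\mu$ reduces the task to a uniform pointwise bound $|\theta_L(g_\tau)_\mu(z)| \ll 1$ for $(\tau,z) \in \mathcal{F} \times s$. Expanding
\[
\theta_L(g_\tau)_\mu(z) = \det(v)\sum_{X\in V(\mathbb{Q})^2} \varphi_f(\mu)(X)\,\varphi^0(Xa)(z)\,q^{Q(X)}, \qquad a\tp{a} = v,
\]
the identity $du(X_1a)\wedge du(X_2a) = \det(a)\,du(X_1)\wedge du(X_2)$ together with $\tr Q_z(Xa) = \tr(Q_z(X)v)$ yields
\[
|\det(v)\,\varphi^0(Xa)(z)\,q^{Q(X)}| = \det(v)^{3/2}\,|du(X_1)\wedge du(X_2)|(z)\,e^{-2\pi\tr(Q_z(X)v)}.
\]
Terms with $X_1 = 0$ or $X_2 = 0$ vanish by Lemma~\ref{lem:bpl}(c), and on the remainder I will use $|du(X_1)\wedge du(X_2)|(z) \leq 2\sqrt{Q_z(X_1)Q_z(X_2)}$.

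The key input is the factorization inequality
\[
\tr(Q_z(X)v) \geq \tfrac{v_1}{2}Q_z(X_1) + \tfrac{v_2}{2}Q_z(X_2),
\]
which on $\mathcal{F}$ follows from the Cauchy--Schwarz bound $|(X_1,X_2)_z| \leq 2\sqrt{Q_z(X_1)Q_z(X_2)}$, the Minkowski bound $|v_0| \leq v_1/2$, AM--GM, and $v_2 \geq v_1$. Combined with $\det(v) \leq v_1 v_2$ and the elementary estimate $\sqrt{y}\,e^{-y} \ll e^{-y/2}$, the pointwise sum then factorizes into a product
\[
|\theta_L(g_\tau)_\mu(z)| \ll [v_1\Theta_z(v_1)]\,[v_2\Theta_z(v_2)], \qquad \Theta_z(t) := \sum_{0 \neq X\in L^\#} e^{-\pi t\, Q_z(X)/2}.
\]
Each factor is uniformly bounded for $z \in s$ and $v_i \geq \sqrt{3}/2$: since $Q_z$ varies continuously through positive definite forms on the compact surface $s$, the shortest nonzero lattice length is bounded below, so $v\Theta_z(v) \ll v e^{-cv} \to 0$ for large $v$ while $v\Theta_z(v)$ is continuous (hence bounded) on any bounded range of $v$. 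Integrating the pointwise bound over the compact $s$ yields $|\det(v) F_L(\tau;s)_\mu| \ll 1$ on $\mathcal{F}$, and modular invariance extends it to all of $\mathfrak{H}$.

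The main obstacle is the passage from diagonal $v$, where the Gaussian $e^{-2\pi\tr(Q_z(X)v)}$ factorizes over $X_1, X_2$ immediately, to general Minkowski-reduced $v$; this is handled precisely by the condition $|v_0| \leq v_1/2$ on $\mathcal{F}$, which controls the cross-term $v_0(X_1,X_2)_z$ arising in $\tr(Q_z(X)v)$.
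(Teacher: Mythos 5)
Your proof is correct and follows the same overall strategy as the paper: both use Theorem \ref{thm:mtb} (via an $\Sp_4(\mathbb{Z})$-equivariant norm) to see that $\det(v)\nm{F_L(\tau;s)}=\nm{\int_s\theta_L(g_\tau)}$ is $\Sp_4(\mathbb{Z})$-invariant, and then bound this quantity on the Siegel fundamental domain using the Gaussian decay of the theta kernel. The only genuine divergence is in the endgame. The paper majorizes the integral by $\det(v)^{1/2}\sum_X\sup_{x}|f(X,x)|e^{-2\pi\tr(q(X)v)}$, uses $\tr(q(X)v)\geq 2\det(q(X)v)^{1/2}$ to get termwise decay as $\det v\to\infty$, and concludes by dominated convergence together with the boundedness criterion it cites from Freitag. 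You instead exploit the Minkowski reduction inequalities $2|v_0|\leq v_1\leq v_2$, $v_1\geq\sqrt3/2$ to split the cross term $v_0(X_1,X_2)_z$ by Cauchy--Schwarz and AM--GM, factor the exponential over $X_1$ and $X_2$, and reduce to a product of two one-variable theta series, each uniformly bounded; this gives a direct uniform bound on the fundamental domain with no appeal to an external criterion, at the cost of the extra reduction-theory manipulation. Both routes are sound. Two small points to tidy: $Q_z$ is defined on $\mathcal{H}$ rather than on $M$, so the phrase ``$Q_z$ varies continuously on the compact surface $s$'' should be run through a compact lift of $s(S)$ to $\mathcal{H}$ (the paper reduces to simply connected coordinate charts for exactly this reason); and the ``pointwise bound on $\theta_L(g_\tau)$'' should really be a bound on the pullback $s^\ast\theta_L(g_\tau)$ against a fixed area form on the compact $S$, which your estimate of $|du(X_1)\wedge du(X_2)|$ does deliver once the (bounded) Jacobian of $s$ is absorbed into the implied constant.
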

\begin{proof}
By a compactness argument we may assume that $S$ is simply connected and has globally defined coordinates $x:S\to \mathbb{R}^2$. In particular we may choose a lift $\tilde s:S\to \mathcal{H}$ of $s$.

Select an $\Sp_4(\mathbb{Z})$-equivariant inner product $\nm{\cdot}$ on $\mathbb{C}[D_L^2]$. By Theorem \ref{thm:mtb} it follows that the function $\nm{F_L(\tau;s)}\det v=\nm{\int_s \theta_L(g_\tau)}$ is $\Sp_4(\mathbb{Z})$-invariant. Thus it suffices to consider this function on the standard Siegel fundamental domain, and hence it suffices to show that $\nm{\int_s \theta_L(g_\tau)}\to 0$ uniformly as $\det v\to \infty$ \cite[$2.12_2$]{freitag2013siegelsche}. Using the same notation as in \eqref{pff}, we get
\[
    \tilde s^\ast \omega(g_\tau)\varphi_\infty(X) = \det(v)^{1 /2}f(X,x) e^{ - 2\pi \tr Q_{\tilde s(x)}(X)v}e^{2\pi i \tr Q(X)u} dx
\]
Pick a positive definite form $q$ satisfying $q< \tr Q_{\tilde s(x)}$ for all $x\in S$. Then we obtain
\[
    \nm{\int_s \theta_L(g_\tau)} \ll \det(v)^{1 /2}\sum_{X\in (L^\#)^2} \sup_{x\in S} |f(X,x)| e^{ -2\pi \tr q(X)v}
\]
As $\tr q(X)v \geq 2 \det(q(X)v)^{1 /2}$ the series converges termwise to $0$ as $\det v\to \infty$. Since $\tr q(X)v \gg \tr q(X)$ the result follows by dominated convergence \cite[2.12]{freitag2013siegelsche}. 
\end{proof}

Now write $\alpha_L(T,v;s)_\mu$ for the $T$th Fourier coefficient of $F_L(\tau;s)$. By Lemma \ref{lem:hbb} we obtain the Hecke bound $|\alpha_L(T,T^{-1};s)_\mu| \ll \det T$. Combining this with Proposition \ref{prp:bbb} completes the proof of Theorem \ref{thm:mtc}.

We turn to the proof of Proposition \ref{prp:bbb}. By the same argumentation as in Subsection \ref{sec:tap}, it suffices to consider a tensor product $\varphi_f =\varphi_f'\otimes \varphi_f''$. Then by \eqref{ebf}
\begin{equation}\label{bef}
    \beta(T,\varphi_f,v;c) = 
    \sum_{
    \substack{
        T = T' + T'' \\
        T'\;\text{indefinite} \\
        T''\;\text{pos. def.}
    }} r(T'',\varphi_f'') \rho(T',\varphi_f') W^\ast\left(\tr(T' v), 4\abs{\det(T' v)}\right).
\end{equation}
We fix an integer $N>0$ such that $Q(X)\varphi_f(X)\in \frac{1}{N}\Sym_2(\mathbb{Z})$ for all $X\in V(\mathbb{Q})^2$. Hence $\beta(T,\varphi_f,v;c)=0$ if $T\not\in \frac{1}{N}\Sym_2(\mathbb{Z})$. We write
\[
    T = \frac{1}{M} \begin{pmatrix} n & r /2 \\ r /2 & m \end{pmatrix}, \hspace{1cm} D = r^2-4nm,
\]
with $M|N$ and $n,r,m$ are coprime, and
\[
    T' = \frac{1}{N}\sigma, \hspace{1cm} \sigma = \begin{pmatrix} \sigma_1 & \sigma_0 \\ \sigma_0 & \sigma_2 \end{pmatrix},
\]
with $\sigma_1,\sigma_2,2\sigma_0\in \mathbb{Z}$. By simple computations
\[
    \det(T-T') = \det T -\tr(T'T^{-1})\det T+\det T',
\]
\[
    \tr(T' T^{ - 1}) = 
    \frac{4M}{N|D|} (\sigma_1 m - \sigma_0r + \sigma_2n) ,
\]

\[
    \det(T') = -\frac{\sigma_0^2-\sigma_1\sigma_2}{N^2}, \hspace{1cm} \det T = \frac{ |D|}{4M^2},
\]
Hence by Lemma \ref{lem:rnb}, and the same estimate for $r(T'',\varphi_f'')$, we obtain
\[
    \abs{\beta(T,\varphi_f,T^{ - 1};c)} 
    \ll 
    \sum_{d>0}\sum_{t< \frac{N|D|}{2M}-\frac{Md}{2N}} 
    \mathcal{N}_{T}(t,d)\; 
    C_T(t,d)^\varepsilon\;
    W^\ast \left(\frac{2Mt}{N|D|},\frac{4M^2d}{N^2|D|}\right),
\]
where
\[
    \mathcal{N}_{T}(t,d) = \#\left\{\sigma \in \Sym_2(\mathbb{Z}): \sigma_1 m - \sigma_0r + \sigma_2n =\frac{t}{2}, \sigma_0^2 - \sigma_1\sigma_2 = \frac{d}{4}\right\},
\]
\[
    C_T(t,d) = \left(\frac{ |D|}{4M^2} - \frac{t}{2NM}-\frac{d}{4N^2} \right)\frac{d}{4N^2}.
\]
Now the task at hand is to provide estimates for $\mathcal{N}_T(t,d)$. We do so through two lemmas. First, let
\[
    \mathfrak{L}_T = \{(u,v,w)\in \mathbb{Z}^3 : ru+mv+nw=0\},
\]
and for a positive integer $A$ let
\[
    r_T(A)= \#\{(u,v,w)\in \mathfrak{L}_T:u^2-vw=A \}.
\]
\begin{lemma}\label{lem:ltd}
    The quadratic form $(u,v,w)\mapsto u^2-vw$ on $\mathfrak{L}_T$ is positive definite of rank $2$ and has discriminant $D$. In particular $r_T(a) <\infty$.
\end{lemma}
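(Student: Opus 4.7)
The plan is to exploit that the ambient ternary form $q(u,v,w)=u^2-vw$ on $\mathbb{R}^3$ has signature $(2,1)$, with bilinear-form matrix
$$M = \begin{pmatrix} 1 & 0 & 0 \\ 0 & 0 & -1/2 \\ 0 & -1/2 & 0 \end{pmatrix},$$
and inverse matrix giving the dual form $q^\ast(a,b,c)=a^2-4bc$. The signature of $q$ restricted to a plane $P\subset\mathbb{R}^3$ is determined by the $q$-type of the $q$-orthogonal line to $P$: since $q$ has one negative direction, $q|_P$ is positive definite precisely when this orthogonal line carries that direction, i.e.\ when the dual form is negative on the defining functional of $P$.

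Since $T$ is positive definite, $m,n>0$, so the linear functional $\ell(u,v,w)=ru+mv+nw$ is nonzero, and primitive over $\mathbb{Z}$ by the hypothesis $\gcd(n,r,m)=1$. Hence $\mathfrak{L}_T=\ker(\ell|_{\mathbb{Z}^3})$ is a primitive sublattice of rank $2$. For positive definiteness I compute
$$q^\ast(\ell) = q^\ast(r,m,n) = r^2 - 4mn = D,$$
which is strictly negative because $\det T = (mn-r^2/4)/M^2>0$ forces $D<0$. The $q$-orthogonal complement of $P=\ker\ell$ is the line spanned by $\xi=M^{-1}\ell^T$, and $q(\xi)=\ell M^{-1}\ell^T = q^\ast(\ell)=D<0$. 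Thus the unique negative direction of $q$ lies outside $P$, so $q|_P$ is positive definite.

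For the discriminant, the key step is to link $\det G$ (the Gram matrix of $q|_{\mathfrak{L}_T}$) with $q^\ast(\ell)$. I extend a $\mathbb{Z}$-basis $\{e_1,e_2\}$ of $\mathfrak{L}_T$ to a $\mathbb{Z}$-basis $\{e_1,e_2,e_3\}$ of $\mathbb{Z}^3$ with $\ell(e_3)=1$, which is possible precisely because $\ell$ is primitive. Letting $A$ be the corresponding change-of-basis matrix, $\det A = \pm 1$ and hence $\det(A^T M A)=\det M = -1/4$. In the new basis $\ell$ becomes the coordinate functional $(0,0,1)$, so by the cofactor formula for the $(3,3)$-entry of the inverse,
$$D = q^\ast(\ell) = \bigl((A^T M A)^{-1}\bigr)_{33} = \frac{\det G}{\det(A^T M A)} = -4\det G.$$
Thus $\det G = -D/4$, and the discriminant of the binary quadratic form $q|_{\mathfrak{L}_T}$ in the basis $\{e_1,e_2\}$ is $-4\det G = D$, as claimed. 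The finiteness $r_T(A)<\infty$ then follows immediately, since a positive definite quadratic form on a rank-$2$ lattice has bounded level sets and hence takes each value only finitely often.

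No step is genuinely difficult; the only mildly subtle point is the linear-algebra identity linking the discriminant of the restricted form to the dual form $q^\ast$ evaluated on the defining functional, and this is just the cofactor formula applied after primitivity lets us complete to a unimodular basis.
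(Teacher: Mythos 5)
Your proof is correct, but it takes a different route from the paper. The paper disposes of this lemma in one line by transporting $\mathfrak{L}_T$ into the lattice $L'$ via $(u,v,w)\mapsto\left(\begin{smallmatrix}-u & w\\ -v & u\end{smallmatrix}\right)$, under which $u^2-vw$ becomes $-\det$ and the linear condition $ru+mv+nw=0$ becomes orthogonality to the vector $X_q$ attached to $q(x,y)=nx^2+rxy+my^2$; the lemma is then a special case of Lemma~\ref{lem:lqt}, whose proof produces explicit integral generators of $L_q$ and extracts the discriminant from the index computation $[L':L_q\oplus\mathbb{Z}X_q]=-d$ together with $\operatorname{disc}L'=-2$ and $(X_q,X_q)=2d$. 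You instead argue directly on $\mathbb{Z}^3$: positive definiteness from the signature of the $q$-orthogonal line to $\ker\ell$ (using $q^\ast(r,m,n)=r^2-4mn=D<0$), and the discriminant from completing a basis of the primitive sublattice $\ker\ell$ to a unimodular basis of $\mathbb{Z}^3$ and applying the cofactor formula $q^\ast(\ell)=\det G/\det M$. The underlying mechanism is the same in both arguments --- primitivity of the defining functional plus the relation between the ambient discriminant, the norm of the orthogonal vector, and the discriminant of the complement --- but your version is self-contained and avoids any explicit generators, whereas the paper's reduction leans on Lemma~\ref{lem:lqt}, which does strictly more work than needed here (it also establishes the class-group identity $[L_q]=-2[q]$, which is essential for Proposition~\ref{prp:mtq} but irrelevant to the present lemma). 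Your conventions are consistent throughout (your $G$ is the half-integral Gram matrix, so the form discriminant is $-4\det G$), and all the auxiliary facts you invoke (primitivity from $\gcd(n,r,m)=1$, splitting of the surjection $\ell:\mathbb{Z}^3\to\mathbb{Z}$, $D<0$ from $\det T>0$) are available in the paper's setup.
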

\begin{proof}
This follows from Lemma \ref{lem:lqt} in the next section, applied to the binary quadratic form $q(x,y)=nx^2+rxy+my^2$ and using the isomorphism of quadratic lattices
\[
    \mathbb{Z}^3\cong L',\hspace{1cm} (u,v,w)\mapsto \begin{pmatrix}
        -u & w \\ -v & u
    \end{pmatrix}.
\]
\end{proof}

\begin{lemma}\label{lem:nre}
    If $t^2\not\equiv Dd\bmod 4$, then $\mathcal{N}_T(t,d)=0$; and if $t^2\equiv Dd\bmod 4$, then $\mathcal{N}_T(t,d)\leq r_T\left(\frac{t^2-Dd}{4}\right)$.
\end{lemma}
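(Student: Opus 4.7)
The statement splits into a congruence condition and an integer bound, which I would handle separately.

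For the congruence, suppose $(a,b,c) := (\sigma_1, 2\sigma_0, \sigma_2) \in \mathbb{Z}^3$ is a solution, so that $2am - br + 2cn = t$ and $b^2 - 4ac = d$. These force $t \equiv br \pmod 2$ and $d \equiv b^2 \pmod 4$, so that
\[
    t^2 - Dd \equiv b^2 r^2 - D b^2 = b^2(r^2 - D) = 4nmb^2 \equiv 0 \pmod 4.
\]
Hence the existence of any solution forces $t^2 \equiv Dd \pmod 4$, which gives the vanishing claim by contrapositive.

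For the bound, I would exhibit an injection $\phi \colon \mathcal{N}_T(t, d) \hookrightarrow \{(u,v,w) \in \mathfrak{L}_T : u^2 - vw = (t^2 - Dd)/4\}$ via the explicit $\mathbb{Z}$-linear formula
\[
    \phi(a, b, c) = (am - cn,\; nb - ra,\; rc - mb).
\]
A direct computation confirms $ru + mv + nw = 0$ identically, so $\phi$ lands in $\mathfrak{L}_T$. The algebraic identity
\[
    u^2 - vw = (am - cn)^2 + nmb^2 - rb(am + cn) + r^2 ac
\]
holds as a polynomial identity; substituting $2(am + cn) = t + rb$ and $4ac = b^2 - d$ (from the two constraints) and using $r^2 - 4nm = -D$ collapses the right-hand side to $(t^2 - Dd)/4$.

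For injectivity, I would compute $\ker \phi$ as a $\mathbb{Q}$-linear map on $\mathbb{Q}^3$: solving the three equations $am = cn$, $nb = ra$, $rc = mb$ directly shows this kernel is the line $\mathbb{Q}\cdot(n, r, m)$. Any two solutions in $\mathcal{N}_T(t, d)$ differ by an element of $\ker(2m, -r, 2n)$, and a multiple $\lambda(n, r, m)$ lies in that kernel precisely when $\lambda(4nm - r^2) = -D\lambda = 0$, forcing $\lambda = 0$ since $D \neq 0$ for $T$ positive definite.

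The main obstacle I anticipate is discovering $\phi$: the natural attempt — orthogonal projection of $\sigma$ off $MT$ in the quadratic space of symmetric matrices — produces a vector of the desired quadratic value only up to an irrational factor of $\sqrt{-D}$. Integrality is rescued by observing that the polynomial $nmb^2 - r(am + cn)b + r^2 ac$ factors over $\mathbb{Z}$ as $(nb - ra)(mb - rc)$, its discriminant in $b$ being the perfect square $r^2(am - cn)^2$; this factorization is exactly what produces the integer entries of $\phi$.
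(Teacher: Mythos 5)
Your proof is correct and is essentially the paper's own argument: your map $\phi$ is exactly the paper's linear map $\sigma\mapsto(m\sigma_1-n\sigma_2,\;2n\sigma_0-r\sigma_1,\;r\sigma_2-2m\sigma_0)$ rewritten in the coordinates $(a,b,c)=(\sigma_1,2\sigma_0,\sigma_2)$, the quadratic identity is the same one the paper records, and the injectivity argument via $\ker\phi=\mathbb{Q}\cdot(n,r,m)$ together with $D\neq 0$ matches the paper's (which phrases the kernel as $\mathbb{Z}\cdot\bigl(\begin{smallmatrix} n & r/2 \\ r/2 & m\end{smallmatrix}\bigr)$). One minor slip: with the paper's convention $D=r^2-4nm$ your parenthetical should read $r^2-4nm=D$, not $-D$; your stated final value $(t^2-Dd)/4$ is the one consistent with the paper's sign convention, so this is only a typo.
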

\begin{proof}
Consider the linear map
\[
    \Sym_2(\mathbb{Z}) \to \mathfrak{L}_T, \hspace{1cm} \sigma\mapsto (m\sigma_1 - n\sigma_2,2n\sigma_0 -r\sigma_1,r\sigma_2 - 2m\sigma_0).
\]
The kernel is spanned by $(\begin{smallmatrix}
    n & r/2 \\ r/2 & m
\end{smallmatrix})$, since this is a primitive vector. It follows that the map
\[
    \left\{
    \sigma\in\Sym_2(\mathbb{Z}): \sigma_1m-\sigma_0r+\sigma_2n=\frac{t}{2}
    \right\}\to \mathfrak{L}_T
\]
is injective, since if $\sigma$ and $\sigma'$ had the same image, then $\sigma-\sigma'= x(\begin{smallmatrix}
    n & r/2 \\ r/2 & m
\end{smallmatrix})$ for some $x\in \mathbb{Z}$, which would imply that $\frac{t}{2}-\frac{t}{2}=x\frac{-D}{2}$ and hence $x=0$. 

Thus the lemma follows from the polynomial identity
\[
    (m\sigma_1 - n\sigma_2)^2 -(2n\sigma_0-r\sigma_1)(r\sigma_2 - 2m\sigma_0) = (\sigma_1 m - \sigma_0r + \sigma_2n)^2 - (r^2 - 4nm)(\sigma_0^2 - \sigma_1\sigma_2)
\]
\end{proof}
We change the order of summation by labeling $A=\frac{t^2-Dd}{4}$. Together with Lemma \ref{lem:nre}, this leaves us to estimate
\begin{equation}\label{psr}
    \sum_{A>0} r_T(A)\sum_{t,d} C_T(t,d)^\varepsilon \:W^\ast \left(\frac{2Mt}{N|D|},\frac{4M^2d}{N^2|D|}\right).
\end{equation}
The inner sum is over pairs $(t,d)\in\mathbb{Z}^2$ that satisfy $d>0$, $t< \frac{N|D|}{2M}-\frac{Md}{2N}$, and $t^2-Dd=4A$. Clearly, $d$ is determined by $t$, and we see that such pairs are in bijection with $t\in\mathbb{Z}$ that satisfy
\[
    -2\sqrt{A}<t<\min\left\{\frac{N|D|}{M}-2\sqrt{A},2\sqrt{A}\right\}, \hspace{1cm} t^2\equiv 4A\mod\abs{D}.
\]
In particular, the number of such $t$ is bounded by $\abs{D}^\varepsilon$. 

We have $C_T(t,d) \ll D^2(1+\sqrt{A}/\abs{D})^3$, and by \eqref{Wfe} and $t< \frac{N\abs{D}}{M}-2\sqrt{A}$
\[
    W^\ast \left(\frac{2Mt}{N|D|},\frac{4M^2d}{N^2|D|}\right) \ll e^{-16\pi M\sqrt{A} / (N\abs{D})}.
\]
Taken together, this yields a bound of \eqref{psr} by
\[
    \abs{D}^{3\varepsilon} \sum_{A>0}r_T(A)e^{-(16\pi M/N - 3\varepsilon)\sqrt{A/D^2}}
\]
By the well-known estimate $\sum_{A<x} r_T(A) \ll x /\sqrt{\abs{D}}$ from the theory of binary quadratic forms (this is where we use that $\mathfrak{L}_T$ has discriminant $D$) we obtain the desired bound by $\abs{D}^{3/2+3\varepsilon}$.

\begin{remark}\label{rem:eeo}
    Pick $a\in\GL_2^+(\mathbb{R})$ such that $T^{-1} = a \tp{a}$. Relabeling $\sigma = (T')^{a}$, we obtain from \eqref{bef}
    \[
        \abs{\beta(T,\varphi_f,T^{-1};c)}\ll\det (T)^{2\varepsilon}\sum_{\substack{\sigma\in\frac{1}{N}\Sym_2(\mathbb{Z})^a \\ \sigma\;\text{indefinite} \\ 1-\sigma\;\text{pos. def. }}} \det(1-\sigma)^\varepsilon \det(\sigma)^\varepsilon \:W^\ast(\tr\sigma,4\abs{\det\sigma}).
    \]
    Since $\frac{1}{N}\Sym_2(\mathbb{Z})^a\subset\Sym_2(\mathbb{R})$ is a lattice with covolume $N^3\det(T)^{3/2}$, it would be expected heuristically that the sum above is $O(\det(T)^{3/2})$. We tried looking for a proof along these lines, using Poisson summation, but ultimately we found it easier to use the more direct methods above. Nonetheless, this suggests that the exponent in Proposition \ref{prp:bbb} is optimal, barring a more explicit expression to replace the $\varepsilon$.
\end{remark}


\section{Example}\label{sec:exa}

In this section, we demonstrate how to compute $\iota_L(T;s)_0$ in a very explicit example. In particular, we see that $I_L(\tau;s)\neq0$.


\subsection{The surface $M'$}

We consider the quadratic space $V$ and lattice $L$ in Example \ref{exa:ile}, \ref{exa:sce}, and \ref{exa:mte}. We let $\Gamma\subset\PSL_2(\mathbb{Z}[i])$ be the congruence subgroup $\Gamma=\Gamma_1(2-i)$, consisting of elements $\gamma\in \PSL_2(\mathbb{Z}[i])$ with $\gamma\equiv \pm(\begin{smallmatrix}
    1 & \ast \\ 0 & 1
\end{smallmatrix})\bmod (2-i)$. The manifold $M$ can also be realized as the complement of the $(-2,3,8)$-Pretzel link in $S^3$, as one can easily check that $\Gamma$ coincides with the index $12$ subgroup of $\PSL_2(\mathbb{Z}[i])$ generated by $(\begin{smallmatrix}
    1 & 1 \\ & 1
\end{smallmatrix})$, $(\begin{smallmatrix}
    1 & i \\ & 1
\end{smallmatrix})$, and $(\begin{smallmatrix}
    1 &  \\ 2-i & 1
\end{smallmatrix})$, see \cite{MR1020042}.

Consider the quadratic space and lattices
\[
    V' = \left\{\begin{pmatrix} - x & y \\ z & x \end{pmatrix} : x,y,z\in \mathbb{Q} \right\},\hspace{1cm}
    L' = \left\{\begin{pmatrix} - x & y \\ z & x \end{pmatrix} : x,y,z\in \mathbb{Z} \right\}.
\]
of rank $3$. These are the orthogonal complement of the vector $(\begin{smallmatrix}
    i & \\ & i
\end{smallmatrix})\in L$. It is easy to see that the stabilizer of $(\begin{smallmatrix}
    i & \\ & i
\end{smallmatrix})$ in $\Gamma$ is equal to the congruence subgroup $\Gamma':=\Gamma_1(5)\subset \PSL_2(\mathbb{Z})$. Further, the hyperbolic plane corresponding to this vector is given by
\[
    \mathcal{H}^2 = \{x+jy: x\in\mathbb{R},y\in \mathbb{R}_{>0}\}
\]
inside hyperbolic $3$-space, which we now denote $
\mathcal{H}^3$ for emphasis. We let $M' = \Gamma'\backslash \mathcal{H}^2 $. Thus, $M'$ is an oriented immersed arithmetic hyperbolic surface in $M$.

Let $s':S\to M'$ be a smooth map, where $S$ is an oriented compact surface. For a fixed curve $c'$ in $M'$ such a map $s'$ with boundary $c'$ exists if and only if the homology class $[c']\in H_1(M')$ is trivial. This follows by applying Lemma \ref{lem:eog} to $M'\times S^1$ and then projecting the resulting map to $M'$. We can then apply the same techniques as in Section \ref{sec:tln} and \ref{sec:mgs} for $V'$ and $L'$ to construct generating series
\[
    I_{L'}(\tau;s')_0 =  \sum_{T\in \Sym_2(\mathbb{Q})} \iota_{L'}(T;s')_0 q^T.
\]
The numbers $\iota_{L'}(T;s')_0$ can be interpreted as the intersection number of $s'$ with the special $0$-cycle corresponding to $T$, or as the winding number of $c'=\partial s'$ and this $0$-cycle. Let $V''=\{(\begin{smallmatrix} ix & \\ & ix\end{smallmatrix}):x\in \mathbb{Q}\}$ and let $L''\subset V''$ be the lattice given by $x\in \mathbb{Z}$. Then we have orthogonal decompositions $V=V'\oplus V$ and $L=L'\oplus L''$. Let $s$ be the composite map $S\to M'\to M$. Then by the same argument as in Lemma \ref{lem:rdl}, we have the identity
\begin{equation}\label{iip}
    I_L(\tau;s)_0 = \Theta(\tau)_0 I_{L'}(\tau;s')_0,
\end{equation}
where the zero subscripts indicate that we consider the components at the zero element of the discriminant groups, and 
\[
    \Theta(\tau)_0 = \sum_{n,m\in \mathbb{Z}} q^{\left(\begin{smallmatrix} n^2 &nm \\ nm & m^2\end{smallmatrix}\right)} \hspace{1cm} \tau  \in \mathfrak{H}
\]
is the theta function attached to $L''$.


\subsubsection{Geodesics in $M'$} 

From now on we identify $\mathcal{H}^2$ with the Poincare upper half-plane in $\mathbb{C}$. A fundamental domain for $\Gamma'$ is depicted in Figure \ref{fig:fd}. It is given by the hyperbolic polygon $\mathcal{F}$ with vertices $\infty$, $0$, $1/3$, $2/5$, $1/2$, and $1$ on the boundary of $\mathcal{H}^2$. 

\begin{figure}[ht]
    \centering
\includegraphics{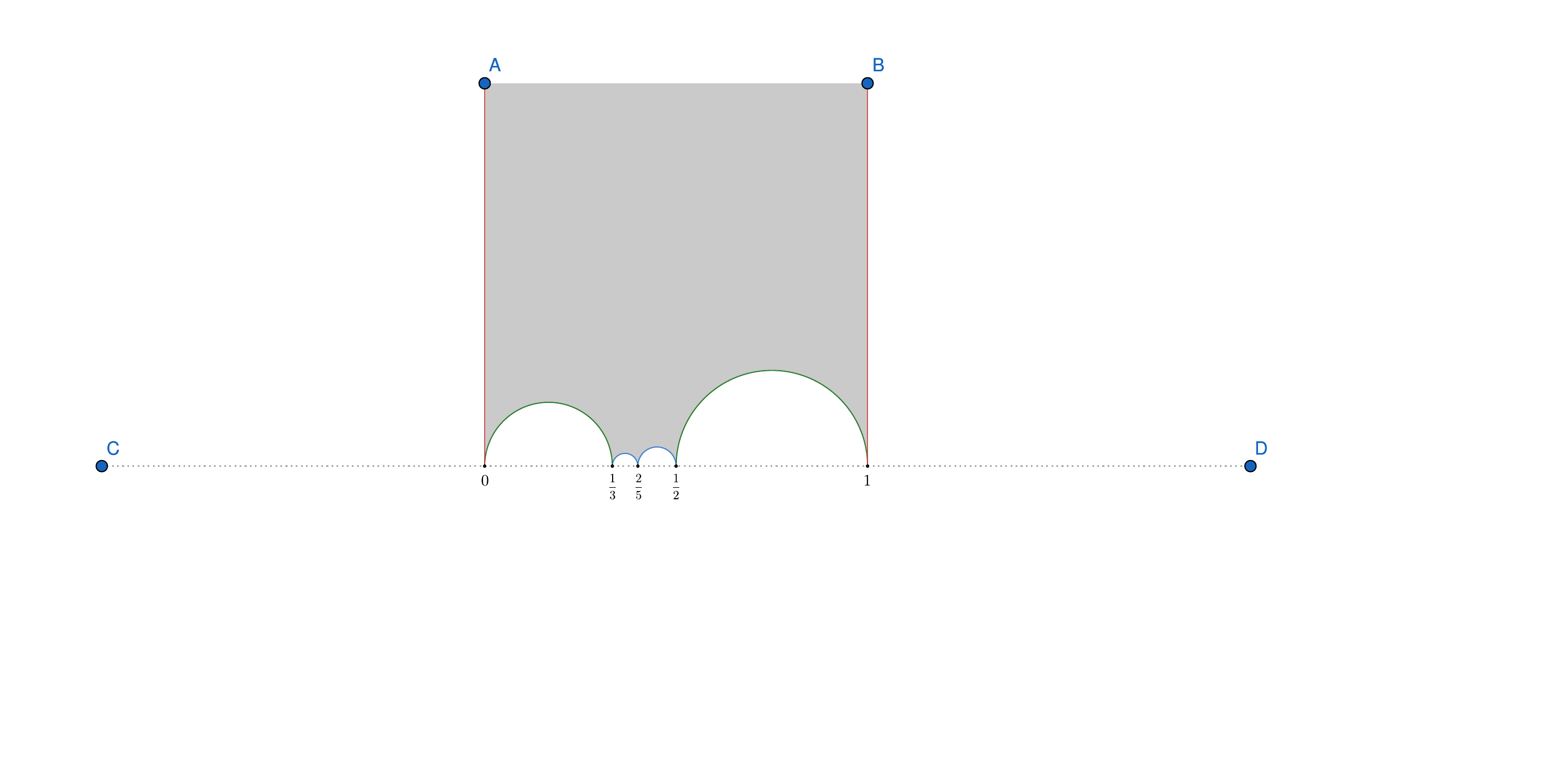}
    \caption{Fundamental domain for $\Gamma'$.
    }
    \label{fig:fd}
\end{figure}

There are four cusps of $M'$, which are given by
\[
    x_1 = [\infty], \hspace{1cm} x_2=[0]=[1], \hspace{1cm} x_3 = [1/3]=[1/2], \hspace{1cm} x_4 = [2/5].
\]
Since the modular curve $X_1(5)$ has genus $0$, it follows that $M'$ is homeomorphic to a $2$-sphere with $4$ punctures, and in particular $H_1(M')$ is free of rank $3$ and $H_2(M')=0$.

We denote by $\ell_{z_1,z_2}$ the oriented geodesic connecting $z_1$ and $z_2$. Let $c_r$, $c_g$, and $c_b$ denote the geodesics in $M'$ obtained as the images of $\ell_{\infty,0}$, $\ell_{0,1/3}$, and $\ell_{1/3,2/5}$. Thus these are just the images of the sides of $\mathcal{F}'$. The following lemma will be used to compute the homology class of geodesics on $M'$.
\begin{lemma}
    For a closed oriented curve $c$ in $M'$ let $r$, $g$, and $b$ denote the intersection pairing of $c$ with $c_r$, $c_g$, and $c_b$, respectively. Then the map $c\mapsto (r,g,b)$ gives an isomorphism $H_1(M')\cong \mathbb{Z}^3$.
\end{lemma}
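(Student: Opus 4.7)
The plan is to use the topological structure of $M'$. Since $X_1(5)$ has genus $0$ and $4$ cusps, $M'$ is homeomorphic to a four-punctured sphere. So first I would set up notation: let $\gamma_i \in H_1(M')$ denote the class of a small, positively oriented loop around the cusp $x_i$. Then $H_1(M')$ is generated by $\gamma_1, \gamma_2, \gamma_3, \gamma_4$ subject to the single relation $\gamma_1+\gamma_2+\gamma_3+\gamma_4=0$, so $H_1(M')\cong\mathbb{Z}^3$, with a basis given by any three of these loops.

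Next, I would observe that $c_r$, $c_g$, $c_b$ are properly embedded arcs in $M'$ whose ends escape to the cusps; they therefore represent classes in the locally finite (Borel--Moore) homology $H_1^{BM}(M')$. Poincaré--Lefschetz duality then gives a perfect pairing
\[
    H_1(M') \times H_1^{BM}(M') \to \mathbb{Z}
\]
realized by geometric intersection, so the statement reduces to showing that $[c_r], [c_g], [c_b]$ form a basis of $H_1^{BM}(M')$. Reading off endpoints from the description of the fundamental domain, $c_r$ is an arc from $x_1$ to $x_2$, $c_g$ from $x_2$ to $x_3$, and $c_b$ from $x_3$ to $x_4$.

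Then I would compute the intersection numbers using the standard fact that a properly embedded arc from $x_i$ to $x_j$ pairs with $\gamma_k$ to give $\delta_{ki}-\delta_{kj}$ (up to a universal sign depending on orientation conventions). Using the relation $\gamma_4=-\gamma_1-\gamma_2-\gamma_3$ to express everything in the basis $(\gamma_1,\gamma_2,\gamma_3)$, the pairing matrix
\[
    \bigl(\,c_\bullet \cdot \gamma_k\,\bigr)
\]
becomes upper triangular with entries $\pm 1$ on the diagonal, hence has determinant $\pm 1$. Therefore the map $c\mapsto (c\cdot c_r, c\cdot c_g, c\cdot c_b)$ is an isomorphism $H_1(M')\cong\mathbb{Z}^3$.

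The main obstacle is purely bookkeeping: verifying the cusp identifications claimed in the text (that $[0]=[1]$, $[1/3]=[1/2]$, and that these are inequivalent to $[\infty]$ and $[2/5]$ under $\Gamma_1(5)$), and fixing orientation conventions so the signs in the pairing matrix work out correctly. Neither is genuinely difficult, but the first requires a short computation with $\Gamma_1(5)$-equivalence of cusps.
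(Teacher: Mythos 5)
Your proposal is correct and follows essentially the same route as the paper: both identify $H_1(M')$ with $\mathbb{Z}^3$ via the cusp loops $\gamma_1,\gamma_2,\gamma_3$ (with $\gamma_1+\gamma_2+\gamma_3+\gamma_4=0$) and then compute the pairing of $c_r,c_g,c_b$ against this basis, obtaining the same unimodular upper-triangular matrix $\left(\begin{smallmatrix}1&-1&0\\0&1&-1\\0&0&1\end{smallmatrix}\right)$. The Borel--Moore/Lefschetz-duality framing you add is a harmless (and ultimately unused) wrapper around the same determinant computation, and your endpoint bookkeeping ($c_r$ from $x_1$ to $x_2$, etc.) matches the fundamental domain in the text.
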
 
\begin{proof}
The homology group $H_1(M')$ is generated by loops $\ell_i$ around the cusps $x_i$ with one relation which stipulates that their sum is trivial. In particular, $[\ell_1]$, $[\ell_2]$, and $[\ell_3]$ provide a basis for $H_1(M')$, and with respect to this basis the map in the lemma has matrix
\[
    \begin{pmatrix} 
        1 & -1 & \\
        & 1 & -1 \\
        & & 1
     \end{pmatrix} .
\]
\end{proof}
This gives an efficient algorithm to compute the homology class of a geodesic $c'$ in $M'$ attached to the $\Gamma'$-equivalence class of an indefinite binary quadratic form $q$ with discriminant $d>0$. 

Define the function
\[
    h:\mathbb{R} - \left\{0,\frac{1}{3},\frac{2}{5},\frac{1}{2},1\right\}\to \{-1,0,1\}^3
\]
by 
\[
    h(\rho) = \begin{cases} 
        ( 1,0,0) & \rho \in ( -\infty,0) \\
        (0, 1,0) & \rho\in (0,\frac{1}{3}) \\
        (0,0, 1) & \rho \in (\frac{1}{3},\frac{2}{5}) \\
        (0,0, -1) & \rho\in (\frac{2}{5},\frac{1}{2}) \\
        (0, -1,0) & \rho\in (\frac{1}{2},1) \\
        ( -1,0,0) & \rho\in (1,\infty)
    \end{cases} .
\]
Let $q_i(x,y)=a_ix^2+b_ixy+c_iy^2$, $i=1,\ldots,\ell$ be the list of forms in the $\Gamma'$-class of $q$ for which the associated geodesic intersects $\mathcal{F}$. We let
\[
    \rho_i^\pm = \frac{ - b_i\pm \sqrt{d}}{2a_i}.
\]
If $a_i=0$ or $\rho_i^\pm\in \left\{0,\frac{1}{3},\frac{2}{5},\frac{1}{2},1\right\}$ for some $i$, then $c'$ will not be closed, so we assume that this is not so. Then let
\[
    h_i = \frac{1}{2}(h(\rho_i^+) - h(\rho_i^- ))
\]
\begin{lemma}\label{lem:ghl}
    The homology class of $c'$ is given by $h=h_1+\cdots+h_\ell$.
\end{lemma}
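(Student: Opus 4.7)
By the previous lemma, the homology class of $c'$ is determined by the triple of signed intersection numbers $c'\cdot c_r$, $c'\cdot c_g$, $c'\cdot c_b$. My plan is to lift $c'$ to $\mathcal{H}^2$ and organize the intersection count around the $\Gamma'$-translates of $\mathcal{F}$ that the lift traverses.

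Let $\tilde c' = \ell_{\rho^-,\rho^+}$ be the geodesic in $\mathcal{H}^2$ lifting $c'$ and let $\gamma_0\in\Gamma'$ generate its stabilizer (translating in the positive direction by the regulator). A fundamental lift of $c'$ is the segment $\tilde c_0$ of $\tilde c'$ from a generic basepoint $\tilde z_0$ to $\gamma_0\tilde z_0$, and it passes through a finite sequence of tiles $T_0,\dots,T_n=\gamma_0 T_0$ with $T_k=\delta_k\mathcal{F}$. Since $\delta_k^{-1}\tilde c'$ meets $\mathcal{F}$, the form $q_{\sigma(k)} := \delta_k^{-1}\cdot q$ lies in the list $q_1,\dots,q_\ell$; the fact that $\gamma_0$ stabilizes $q$ (so the period-$n$ shift on tiles acts trivially on forms), together with distinctness of the $T_0,\dots,T_{n-1}$ within one period, makes the map $\sigma:\{0,\dots,n-1\}\to\{1,\dots,\ell\}$ a bijection; in particular $\ell=n$.

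Each tile transition $T_{k-1}\to T_k$ is a single crossing of $\tilde c_0$ with a preimage of exactly one of $c_r,c_g,c_b$. After applying $\delta_k^{-1}$, the segment $\delta_k^{-1}\tilde c'\cap \mathcal{F}$ enters $\mathcal{F}$ through the side of $\mathcal{F}$ opposite the arc $A_k^-$ of $\partial\mathcal{H}^2\setminus\{\infty,0,1/3,2/5,1/2,1\}$ containing $\rho^-_{\sigma(k)}$, and exits through the side opposite the arc $A_k^+$ containing $\rho^+_{\sigma(k)}$. The function $h$ is designed precisely so that $h(A)$ encodes the contribution of an \emph{outward} crossing of the side opposite $A$ to $c'\cdot(c_r,c_g,c_b)$. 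The key verification is to check this for each of the six arcs: for the three side pairings of $\mathcal{F}$ in $\Gamma_1(5)$---the translation $z\mapsto z-1$ pairing $\ell_{\infty,0}$ with $\ell_{1,\infty}$, an explicit element pairing $\ell_{0,1/3}$ with $\ell_{1/2,1}$, and one pairing $\ell_{1/3,2/5}$ with $\ell_{2/5,1/2}$---I would check that each pairing reverses the induced orientation on the projected geodesic in $M'$, and that at each side the frame (outward tangent of the crossing, tangent of the side in $\mathcal{H}^2$) has the expected determinant. This establishes that the exit of $T_k$ contributes $+h(\rho^+_{\sigma(k)})$ to $c'\cdot(c_r,c_g,c_b)$, while the entry of $T_k$ contributes $-h(\rho^-_{\sigma(k)})$ (inward crossings flip the sign).

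Since every side-crossing of $\tilde c_0$ is simultaneously the exit of one tile and the entry of the next, both sums
\[
    \sum_{k=0}^{n-1} h(\rho^+_{\sigma(k)}) \qquad \text{and} \qquad -\sum_{k=0}^{n-1} h(\rho^-_{\sigma(k)})
\]
equal the full intersection class $c'\cdot(c_r,c_g,c_b)$. Averaging and re-indexing via the bijection $\sigma$ yields $\sum_{i=1}^\ell h_i$, as required. The main obstacle is the sign verification for $h$ in the previous paragraph: it requires a careful case-by-case examination of the three side pairings and the orientation behavior on the six arcs; once that is completed, the rest of the argument is essentially combinatorial bookkeeping on the tile cycle.
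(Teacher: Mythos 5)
Your proposal is correct and is essentially the paper's own argument in dual form: where you fix the lifted geodesic and track the tiles $\delta_k\mathcal{F}$ it traverses, the paper fixes $\mathcal{F}$ and cyclically orders the translated forms $q_i$ so that consecutive ones share a boundary crossing, but the resulting bookkeeping (each intersection of $c'$ with $c_r\cup c_g\cup c_b$ is a side crossing contributing $h(\rho^+)$ as an exit of one segment and $-h(\rho^-)$ as an entry of the next, then average to get the $\tfrac12$ in $h_i$) is identical. The sign verification you flag as the remaining obstacle is likewise only asserted, not carried out, in the paper.
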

\begin{proof}
Let $z_i^+$ and $z_i^-$ be the intersection points of the geodesic attached to $q_i$ and the boundary of  $\mathcal{F}$, such that $\rho_i^{+}$, $z_i^+$, $z_i^-$, and $\rho_i^-$ lie on the geodesic in this order. For each of the boundary geodesics $\ell_{x_1,x_2}$ of $\mathcal{F}$ we see that $z_i^\pm\in \ell_{x_1,x_2}$ if and only if $\rho_i^\pm\in (x_1,x_2)$. Now we order the $q_i$ such that $z_i^-$ is in the same $\Gamma'$-orbit as $z_{i+1}^+$, and let $y_i\in M'$ be this orbit. It follows that $y_1,\ldots,y_\ell$ are precisely the intersection points of $c'$ and $c_r$, $c_g$, $c_b$. Furthermore $h(\rho_{i+1}^+)$ gives the orientation number of $c'$ with $(c_r,c_g,c_b)$ at $y_i$, and $h(\rho_{i}^-)$ equals minus the orientation number. The lemma now follows by rearranging the sum as follows
\[
    h_1 +\cdots +h_{\ell} = \sum_{i = 1}^\ell \frac{1}{2}( h(\rho_{i + 1}^+) - h(\rho_{i}^- )) .
\]
\end{proof}

The proof of Lemma \ref{lem:ghl} also indicates how to compute the forms $q_1,\ldots,q_{\ell}$ at the same time as their homology class $h\in \mathbb{Z}^3$. Given $q_i$, we compute $\rho_{i}^\pm$ and determine which of the geodesics $c_r$, $c_g$, or $c_b$, that $y_i$ lies on. Then $q_{i+1}$ is given by applying to $q_i$ the element $\gamma_i\in \Gamma'$ which carries the side of $\mathcal{F}$ containing $z_i^-$ to the side containing $z_{i+1}^+$.

We have singled out three specifically chosen geodesics $c_1'$, $c_2'$, and $c_3'$, see Table \ref{tab:ght}. One important reason for the choice of these geodesics is that their homology classes sum to zero. In particular, if we take $c'$ to be their disjoint union then the homology class of $c'$ is trivial, and there exists a map $s'$ with boundary $c'$.

\begin{table}[h]
    \centering
    \begin{tabular}{c  c c  c}
         & $q$ & $\ell$ & $h$ \\[.5ex]\hline \\[-2ex]
        $c_1'$ & $x^2-3y^2$ & $5$ & $(3,2,0)$ \\
        $c_2'$ & $2x^2+2xy-y^2$ & $4$ & $(2,-1,-1)$ \\
        $c_3'$ & $-3x^2-11xy+9y^2$ & $9$ & $(-5,-1,1)$
    \end{tabular}
    \caption{Three geodesics in $M'$}
\label{tab:ght}
\end{table}


\subsection{The $0$-cycle $c(T)$}

For a pair of vectors $X\in V'(\mathbb{R})^2$ for which $Q(X)$ is positive definite, we let $c_X\in \mathcal{H}^2$ denote the associated oriented point. This is oriented by the same rule as in Section \ref{sec:tst}. We let $c(X)\in M'$ denote the image of $c_X$, which only depends on the $\Gamma'$-orbit of $X$. For a positive definite matrix $T\in \Sym_2(\mathbb{Z})$ we define the special cycle
\[
    c(T) := \sum_{\substack{X\in (L')^2 / \Gamma' \\ Q(X)=T}} c(X).
\]
We shall give a more explicit description of $c(T)$ in this subsection. 

If $q(x,y)=ax^2+bxy+cy^2$ is a definite binary quadratic form with real coefficients, we let $z_q\in \mathcal{H}^2$ denote the unique root of $q(z,1)=0$. We take $z_q$ to be positively (resp. negatively) oriented if $q$ is positive (resp. negative) definite.

We let $\PSL_2(\mathbb{R})$ act on the set of binary quadratic forms with real coefficients from the right in the usual way
\[
    (q\cdot (\begin{smallmatrix} a & b \\ c & d \end{smallmatrix}))(x,y) = q(ax + by, cx +yd).
\]

\begin{lemma}\label{lem:qii}
    Let $X_i = (\begin{smallmatrix} -x_i & y_i \\ z_i & x_i \end{smallmatrix})\in V'(\mathbb{R})$ for $i=1,2$ and let $X=(X_1,X_2)$. Let $q(x,y)=ax^2+bxy+cy^2$ be the binary quadratic form with coefficients
    \[
        a = \det \begin{pmatrix} x_1 & z_1 \\ x_2 & z_2 \end{pmatrix}\hspace{1cm} b = \det \begin{pmatrix} z_1 & y_1 \\ z_2 & y_2 \end{pmatrix}\hspace{1cm} c = \det\begin{pmatrix} x_1 & y_1 \\ x_2 & y_2 \end{pmatrix}.
    \]
    Then 
    \begin{enumerate}[label = (\roman*), font = \upshape]
        \item the discriminant of $q$ is equal to $-4\det Q(X)$;
        \item if $Q(X)$ is positive definite we have $c_X = z_q$;
        \item the mapping $X\mapsto q$ is $\PSL_2(\mathbb{R})$-equivariant in the sense that if $q$ is associated with $X$ then $q\cdot g$ is associated with $g^{-1}X$ for $g\in \PSL_2(\mathbb{R})$;
        \item the mapping $X\mapsto q$ is $\GL_2(\mathbb{R})$-equivariant in the sense that if $q$ is associated with $X$ then $\det(h)q$ is associated with $Xh$ for $h\in \GL_2(\mathbb{R})$.
    \end{enumerate}
\end{lemma}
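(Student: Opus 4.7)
The plan is direct computation for each part, leveraging two structural features: the coefficients $a, b, c$ are alternating bilinear $2\times 2$ determinants of coordinates of $X_1, X_2$, and on $\mathcal{H}^2$ the function $u(X_i, z)$ takes a particularly simple real form. Writing $Q(X_i) = x_i^2 + y_i z_i$ and polarizing to $(X_1, X_2) = 2x_1 x_2 + y_1 z_2 + y_2 z_1$, part (i) follows by expanding
\[
\det Q(X) = Q(X_1) Q(X_2) - \tfrac{1}{4}(X_1, X_2)^2
\]
and regrouping: the mixed $x$-terms collapse to $(x_1 y_2 - x_2 y_1)(x_1 z_2 - x_2 z_1) = ac$, while $y_1 z_1 y_2 z_2 - \tfrac{1}{4}(y_1 z_2 + y_2 z_1)^2 = -\tfrac{1}{4}(y_1 z_2 - y_2 z_1)^2 = -b^2/4$, yielding $b^2 - 4ac = -4 \det Q(X)$.

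For (ii), since the entries of $X_i$ are real, formula \eqref{ufe} specializes on $\mathcal{H}^2$ to
\[
u(X_i, z) = \tfrac{1}{y}\bigl(z_i |z|^2 + 2 x_i \mathrm{Re}(z) - y_i\bigr).
\]
The combination $z_2 u(X_1) - z_1 u(X_2)$ eliminates $|z|^2$ and gives $\mathrm{Re}(z) = -b/(2a)$; the combination $x_2 u(X_1) - x_1 u(X_2)$ eliminates the linear term and gives $|z|^2 = c/a$. These coincide with $\mathrm{Re}(z_q)$ and $|z_q|^2$, so $c_X = z_q$ as points of $\mathcal{H}^2$. For the orientation, I would expand $X_1, X_2, X(z_q)$ in the positively oriented basis of $V'$ induced from \eqref{vdo} and verify that $\mathrm{sign}\det(X_1, X_2, X(z_q)) = \mathrm{sign}(a)$; since $a > 0$ exactly when $q$ is positive definite, this matches the orientation of $z_q$.

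For (iv), writing $Xh$ produces coordinate matrices $\tp{h}\bigl(\begin{smallmatrix} x_1 & z_1 \\ x_2 & z_2 \end{smallmatrix}\bigr)$, and similarly for the other two pairs of coordinates, so each of $a, b, c$ is multiplied by $\det h$, proving $q_{Xh} = (\det h)\, q_X$. For (iii), I would combine (ii) with naturality: by Lemma \ref{lem:bpl}(a), $u(g^{-1} X_i, z) = u(X_i, g z)$ gives $c_{g^{-1} X} = g^{-1} c_X = g^{-1} z_{q_X}$; the identity $q(gw, 1) = (\gamma w + \delta)^{-2} (q \cdot g)(w, 1)$ (for $g = \bigl(\begin{smallmatrix} \alpha & \beta \\ \gamma & \delta \end{smallmatrix}\bigr)$) shows $g^{-1} z_{q_X} = z_{q_X \cdot g}$. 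Thus $q_{g^{-1} X}$ and $q_X \cdot g$ share the same non-real root. Combined with equality of discriminants (both equal $-4 \det Q(X)$ by (i) and $G$-invariance of $Q$), they agree up to sign, which is fixed to $+1$ by continuity from $g = 1$ using connectedness of $\PSL_2(\mathbb{R})$.

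The main obstacle is the orientation bookkeeping in (ii), which requires carefully unpacking the conventions that relate the orientations of $V'$, of $U = \mathrm{Span}(X_1, X_2)$ via the ordered basis $(X_1, X_2)$, and of the negative line $U^\perp$ at the point $z_q$. The verification reduces to a single determinant computation, but it must be carried out in a positively oriented basis of $V'$.
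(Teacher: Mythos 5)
Your computations for (i) and (iv) are correct, and these are exactly the parts the paper dismisses as ``very simple algebraic manipulations,'' so there is nothing to compare there. For (ii) you take a genuinely different route: the paper first establishes the equivariance statements and uses $gc_X=c_{gX}$, $gz_q=z_{q\cdot g}$ to reduce to the single pair $X=((\begin{smallmatrix} -1 & \\ & 1\end{smallmatrix}),(\begin{smallmatrix} & 1 \\ 1 & \end{smallmatrix}))$, where $c_X=i=z_{x^2+y^2}$ and the orientation is read off by noting that $X_1,X_2$ are the $x$- and $y$-partial derivatives of $z\mapsto X(z)$ at $i$; you instead solve $u(X_1,z)=u(X_2,z)=0$ directly for $\Re z$ and $|z|^2$ (your formulas are correct, and $a\neq0$ is guaranteed by $b^2-4ac<0$) and propose the general criterion $\sgn\det(X_1,X_2,X(z_q))=\sgn(a)$ for the orientation. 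That criterion is the right unwinding of the conventions of Section \ref{sec:tst} (the point is positive iff $X(z)$ is a positive basis of $U^\perp$, i.e.\ iff $(X_1,X_2,X(z))$ is positively oriented in $V'$), but it is precisely the delicate step and you leave it unexecuted; since you prove (iii) and (iv) independently anyway, the paper's reduction to one well-chosen $X$ is the cheaper way to discharge it. For (iii) you also diverge: rather than a direct coefficient computation, you argue that $q_{g^{-1}X}$ and $q_X\cdot g$ share the root $g^{-1}z_{q_X}$ and the discriminant, hence agree up to a sign fixed by continuity on the connected group $\PSL_2(\mathbb{R})$.

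The one genuine (though easily repaired) gap is that this argument for (iii) presupposes that $q_X$ is definite, i.e.\ that $Q(X)$ is positive definite, since otherwise there is no non-real root to compare; the lemma asserts (iii) for arbitrary $X\in V'(\mathbb{R})^2$. You should add the observation that, for fixed $g$, the coefficients of both $q_{g^{-1}X}$ and $q_X\cdot g$ are polynomial in the entries of $X$, so the identity propagates from the non-empty open subset $\{X:Q(X)\ \text{pos.\ def.}\}\subset V'(\mathbb{R})^2\cong\mathbb{R}^6$ to all of $V'(\mathbb{R})^2$. With that sentence added (and the orientation determinant in (ii) actually carried out in a declared positive basis of $V'$), the proposal is complete.
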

\begin{proof}
(i), (iii), and (iv) are very simple algebraic manipulations. 

For (ii) note that since $g c_X = c_{gX}$ and $g z_q = z_{g\cdot q}$ for $g\in \PSL_2(\mathbb{R})$, it suffices to consider a single choice of $X$. We take $X = ((\begin{smallmatrix} -1 & \\ & 1 \end{smallmatrix}), (\begin{smallmatrix}  & 1 \\ 1 & \end{smallmatrix}))$. Then the orthogonal compliment of $X$ is spanned by the vector $(\begin{smallmatrix}  & 1 \\ -1 &  \end{smallmatrix})=X(i)$, hence $c_X$ equals $i$ as a point. Furthermore, an easy computation shows that $X_1$ and $X_2$ equal the $x$ and $y$ partial derivatives of the map $z\mapsto X(z)$ at $z=i$. Hence the standard orientation of $\mathcal{H}^2$ agrees with the orientation of the normal bundle of $c_X$, and $c_X$ is positively oriented. On the other hand, the quadratic form associated to $X$ is $q(x,y) = x^2+y^2$, and so $z_q = i$ with positive orientation.
\end{proof}

Let $x(q)\in M'$ denote the image of $z_q$. By Lemma \ref{lem:qii}(iii) the point $x(q)$ only depends on the $\Gamma'$-orbit of $q$, and if $q$ is the quadratic form associated with $X$ then $c(X)=x(q)$. 

For $d\in \mathbb{Z}$ let $\mathcal{Q}_d$ denote the set of integral binary quadratic forms with discriminant $d$, and for $T\in \Sym_2(\mathbb{Z})$ let $L'(T)=\{X\in (L')^2 : Q(X)=T\}$. Assume that $T$ is positive definite and let $d=-4\det T$. Then we obtain an equality of $0$-cycles
\begin{equation}\label{zce}
    c(T) = \sum_{X\in L'(T)/\Gamma' } c(X) = \sum_{q\in \mathcal{Q}_d / \Gamma'} m(T,q)x(q),
\end{equation}
where $m(T,q)$ is the number of $X\in L'(T)$ for which $q$ equals the quadratic form associated to $X$. 

The cycle $c(T)$ has an interesting symmetry. Consider the involutions of $L'(T)$ and $\mathcal{Q}_d$ given by $(\begin{smallmatrix} -x_i & y_i \\ z_i & x_i \end{smallmatrix})\mapsto (\begin{smallmatrix} x_i & y_i \\ z_i & -x_i \end{smallmatrix})$ and $ax^2+bxy+cy^2\mapsto -ax^2+bxy-cy^2$. Let us denote these involutions by $X\mapsto \widetilde X$ and $q\mapsto \tilde q$. If $q$ is associated with $X$ then $\tilde q$ is associated with $\widetilde X$, and the points $x(q)$ and $x(\tilde q)$ are of opposite orientations. In addition
\begin{equation}\label{eq:mst}
    m(T,q) = m(T,\tilde q).
\end{equation}
From Lemma \ref{lem:qii} we also deduce that for $\gamma\in \PSL_2(\mathbb{Z})$
\begin{align}\label{eq:tev}
m(T^\gamma,q) = m(T,q) \\\label{eq:qev}
m(T,q\cdot \gamma) = m(T,q)
\end{align}
and that
\begin{equation}\label{eq:odm}
    m\left(\begin{pmatrix}
        t_1 & t_0 \\ t_0  & t_2
    \end{pmatrix},q\right) = m \left(\begin{pmatrix}
        t_1 & -t_0 \\ -t_0  & t_2
    \end{pmatrix},-q\right).
\end{equation}


\subsubsection{A formula for $m(T,q)$}

Suppose now that $d$ is a negative discriminant. Let $\mathcal{Q}_d^+\subset \mathcal{Q}_d$ be the subset of positive definite forms. Let $\mathcal{C}_d = \mathcal{Q}_d^+ / \PSL_2(\mathbb{Z})$ and let $\mathcal{C}_d^0 \subset \mathcal{C}_d$ be the form class group of discriminant $d$, consisting of classes of primitive forms. For a $q\in \mathcal{Q}_d^+$ we let $[q]\in \mathcal{C}_d$ be its class, and we write the group law on $\mathcal{C}_d^0$ additively.

For $T = (\begin{smallmatrix} t_1 & t_0 \\ t_0 & t_2\end{smallmatrix})\in \Sym_2(\mathbb{Z})$ with $d=-4\det T$, we define $t\in \mathcal{Q}_d^+$ by 
\[
    t(x,y)=t_1x^2+2t_0xy+t_2y^2 
\]
and we write $m(t,q)=m(T,q)$. By \eqref{eq:mst} it is enough to consider positive definite $q$, and by \eqref{eq:tev} and \eqref{eq:qev}, $m(t,q)$ only depends on the classes $[t]$ and $[q]$. For $t$ and $q$ positive definite forms, not necessarily of the same discriminant, we let
\[
    r_q^+(t) := \# \{ h\in M_2(\mathbb{Z}):t = q\cdot h,\; \det(h)>0\}.
\]
Similarly this only depends on the classes $[t]$ and $[q]$.
\begin{proposition}\label{prp:mtq}
    Let $t,q\in \mathcal{Q}_d^+$, let $g$ be the content of $q$, and let $q_0=\frac{1}{g}q$. Then
    \[
        m(t,q) = r_{-2[q_0]}^+(t).
    \]
\end{proposition}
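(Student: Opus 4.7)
The plan is to reformulate $m(t,q)$ as the number of representations of $t$ by a binary form carried by a specific rank-$2$ sublattice of $L'$, and then identify the class of that form via the classical form--ideal correspondence.

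First, I would observe that the associated-form map of Lemma \ref{lem:qii} is a Pl\"ucker map: the coefficients $(a,b,c)$ of $q$ are the $2 \times 2$ minors of the matrix with rows $X_1, X_2$, so $q$ determines the $2$-plane $\mathrm{span}(X_1, X_2) \subset V'$. Writing $q = g q_0$ with $q_0$ primitive, a direct computation shows that $X_1, X_2$ are both orthogonal to the vector $X_{q_0} \in V'(\mathbb{Q})$ naturally attached to $q_0$ (as in the proof of Lemma \ref{lem:ltd}), so they lie in the rank-$2$ sublattice $\mathfrak{L}_{q_0} := X_{q_0}^\perp \cap L'$.

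Second, I would fix a $\mathbb{Z}$-basis $(f_1, f_2)$ of $\mathfrak{L}_{q_0}$ with the orientation for which its Pl\"ucker coordinate equals $+X_{q_0}$. Every pair $(X_1, X_2) \in \mathfrak{L}_{q_0}^2$ can then be written uniquely as $(f_1, f_2) h$ with $h \in M_2(\mathbb{Z})$, and the scaling property from Lemma \ref{lem:qii}(iv) shows that the associated form of $X$ equals $q = g q_0$ if and only if $\det h = g$. The Gram condition $Q(X) = T$ becomes $Q_0 \cdot h = t$, where $Q_0$ is the binary form with Gram matrix $((f_i, f_j))/2$. Hence
\[
m(t,q) \;=\; \#\{h \in M_2(\mathbb{Z}) : \det h = g,\ Q_0 \cdot h = t\} \;=\; r_{[Q_0]}^+(t),
\]
and the proposition reduces to the class-group identity $[Q_0] = -2[q_0]$ in $\mathcal{C}_{d_0}^0$, where $d_0 = d/g^2$.

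Third, to prove $[Q_0] = -2[q_0]$, I would invoke the form--ideal correspondence for $\mathcal{O}_{d_0}$. The identity $X_{q_0}^2 = (d_0/4) I$ in $M_2(\mathbb{Q})$ shows that an appropriate $\mathbb{Z}$-combination of $I$ and $X_{q_0}$ generates an order inside $\mathbb{Q}[X_{q_0}] \subset M_2(\mathbb{Q})$ isomorphic to $\mathcal{O}_{d_0}$. Left multiplication by this order preserves $\mathfrak{L}_{q_0}$, which therefore becomes a rank-$1$ $\mathcal{O}_{d_0}$-module, i.e., a fractional ideal. An explicit basis computation (I verified this for $q_0 = 2x^2 + xy + 3y^2$ of discriminant $-23$, where one obtains $\mathfrak{L}_{q_0} \cong \mathfrak{q}_1^{-2}$ with $[q_0] = [\mathfrak{q}_1]$) identifies this ideal as being in the class of $\bar{\mathfrak{a}}^2$, where $\mathfrak{a}$ is the ideal corresponding to $q_0$. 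Transporting back across the correspondence yields $[Q_0] = -2[q_0]$.

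The main obstacle will be the orientation bookkeeping: the class $-2[q_0]$ genuinely differs from $+2[q_0]$ whenever $\mathcal{C}_{d_0}^0$ has elements of order greater than $2$, so the Pl\"ucker normalization of $(f_1, f_2)$ must be tracked carefully throughout. A secondary technicality is that the description of $\mathfrak{L}_{q_0}$ as an $\mathcal{O}_{d_0}$-ideal depends on whether $X_{q_0}$ lies in $L'$ or only in $\tfrac{1}{2} L'$ (i.e., on the parity of the middle coefficient of $q_0$), but the resulting ideal class is the same in either case.
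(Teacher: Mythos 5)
Your reduction of $m(t,q)$ to a representation number is sound and in fact mirrors the paper's own argument (Lemma \ref{lem:lqq}): both arguments observe that any $X$ with associated form $q=gq_0$ lies in $\mathfrak{L}_{q_0}=X_{q_0}^\perp\cap L'$, write $X=(f_1,f_2)h$ for a suitably oriented basis, and use the scaling property of Lemma \ref{lem:qii}(iv) to translate the condition ``associated form equals $q$'' into $\det h=g$. One small step you assert without justification is the equality $\#\{h:\det h=g,\ Q_0\cdot h=t\}=r^+_{[Q_0]}(t)$: you need that $Q_0\cdot h=t$ together with $\det h>0$ forces $\det h=g$, which follows by comparing discriminants ($\operatorname{disc}(Q_0\cdot h)=\det(h)^2\operatorname{disc}(Q_0)$ with $\operatorname{disc}(Q_0)=d/g^2$ and $\operatorname{disc}(t)=d$); the paper makes this point explicitly and you should too.

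The genuine gap is in your third step. The identity $[Q_0]=-2[q_0]$ is the entire content of the paper's Lemma \ref{lem:lqt}, and it is where all the work lies: the paper exhibits three explicit generators of $L_{q_0}$ with their single relation, computes the restricted ternary form, verifies the orientation of the chosen basis against the orientation of $L'$, and matches the result against an explicit presentation of $\mathfrak{a}^2=\mathbb{Z}a^2+\mathbb{Z}a\omega+\mathbb{Z}\omega^2$ with its norm form. Your proposal replaces this with ``an explicit basis computation identifies this ideal as being in the class of $\bar{\mathfrak{a}}^2$,'' verified only for the single form $2x^2+xy+3y^2$ of discriminant $-23$. A single numerical instance cannot establish the identity for all $q_0$ and all $d$ --- in particular it cannot distinguish $-2[q_0]$ from $2[q_0]$ or from other candidates unless the class group has large enough exponent, and the sign is precisely the delicate orientation issue you yourself flag as ``the main obstacle'' and then defer. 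Your proposed route via the $\mathcal{O}_{d_0}$-module structure on $\mathfrak{L}_{q_0}$ (left multiplication by the order generated by $X_{q_0}$, noting that $(X_{q_0},X)=\operatorname{tr}(X_{q_0}X)$ so that $X_{q_0}\cdot$ preserves $X_{q_0}^\perp\cap V'$) is a legitimate and arguably more conceptual alternative to the paper's generator computation, but until the ideal class of $\mathfrak{L}_{q_0}$ is actually computed in general, with the orientation normalization pinned down, the proof is incomplete at its central step. (Also, $X_{q_0}^2=d_0I$ with the paper's normalization of $X_{q_0}$, not $(d_0/4)I$; this matters when you identify the correct order.)
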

Proposition \ref{prp:mtq} will follow from two technical lemmas about binary quadratic forms. There is a one-to-one correspondence between $\mathcal{C}_d$ and isometry classes of oriented quadratic $\mathbb{Z}$-lattices of rank 2 and discriminant $d$, which is given by sending a quadratic $\mathbb{Z}$-lattice $(\mathfrak{L},Q_{\mathfrak{L}})$ to the binary quadratic form 
\[
    q(x,y) = Q_{\mathfrak{L}}(x\lambda_1 + y\lambda_2)
\]
for a positively oriented basis $\lambda_1,\lambda_2$ for $\mathfrak{L}$. We let $[\mathfrak{L}]\in \mathcal{C}_d$ denote the class corresponding to $(\mathfrak{L},Q_{\mathfrak{L}})$. Here we are using the the signed discriminant of $\mathfrak{L}$ which equals $-4\det Q_{\mathfrak{L}}(\lambda)$, for any basis $\lambda=(\lambda_1,\lambda_2)$. 
This recovers the classical bijection between $\mathcal{C}_d$ and ideal classes in the imaginary quadratic order $\mathcal{O}_d$ of discriminant $d$ by viewing an ideal $\mathfrak{a}$ as a rank 2 lattice with quadratic form given by $x\mapsto N_{\mathbb{Q}(\sqrt{d}) / \mathbb{Q}}(x) / N(\mathfrak{a})$, where $N(\mathfrak{a})$  denotes the norm of $\mathfrak{a}$, and with orientation coming from a fixed choice of embedding $\mathbb{Q}(\sqrt{d}) \subset \mathbb{C}$.

For a positive definite binary quadratic form $q(x,y)=ax^2+bxy+cy^2$ with discriminant $d$ let
\[
    L_q = \{X\in L' : (X,X_q) = 0 \}, \hspace{1cm} X_q = \begin{pmatrix} - b & -2c \\ 2a & b\end{pmatrix}.
\]
Since $L_q\otimes \mathbb{Q}$ is the orthogonal compliment of $X_q$ in $V'$, we see that $L_q$ has rank $2$. We orient $L_q$ such that $L_q\oplus \langle X_q\rangle$ has the same orientation as $L'$ given by the basis $(\begin{smallmatrix} -1 & \\ & 1 \end{smallmatrix}),(\begin{smallmatrix} & 1 \\ & \end{smallmatrix}),(\begin{smallmatrix} & \\ 1 & \end{smallmatrix})$. 
\begin{lemma}\label{lem:lqt}
    Suppose that $q$ is positive definite and primitive. Then with the restriction of $Q$ the quadratic lattice $L_q$ has discriminant $d$ and $[L_q] = -2[q]$. 
\end{lemma}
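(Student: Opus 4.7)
The plan is to identify $L_q$ with an explicit fractional ideal in the imaginary quadratic field $K := \mathbb{Q}[X_q]$ and then invoke the classical bijection between proper ideal classes of $\mathcal{O}_d$ and the form class group $\mathcal{C}_d^0$.

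First I observe that $X_q^2 = d \cdot I$ in $M_2(\mathbb{Q})$, so $K$ is isomorphic to $\mathbb{Q}(\sqrt{d})$. The standard action of $M_2(\mathbb{Q})$ on $\mathbb{Q}^2$ restricts to give $\mathbb{Q}^2$ the structure of a one-dimensional $K$-vector space. Tracking how $X_q$ acts on the standard basis $(1,0), (0,1)$, I identify $\mathbb{Z}^2 \subset \mathbb{Q}^2 = K$, up to a scalar in $K^\times$, with the fractional ideal $\mathfrak{a}_q = a\mathbb{Z} + \tfrac{b+\sqrt{d}}{2}\mathbb{Z}$. Primitivity of $q$ ensures that the stabilizer of $\mathbb{Z}^2$ in $K$ is the full order $\mathcal{O}_d$ of discriminant $d$, so that $\mathfrak{a}_q$ is a proper $\mathcal{O}_d$-ideal whose class corresponds to $[q]$ under the standard form--ideal correspondence.

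Next I analyze $L_q$. Since $V' = \mathbb{Q}X_q \oplus X_q^\perp$ and both the centralizer and anti-centralizer of $X_q$ in $V'$ are two-dimensional, the condition $(Y, X_q) = \tr(YX_q) = 0$ is equivalent to $Y$ anti-commuting with $X_q$. Under the $K$-structure on $\mathbb{Q}^2$, such $Y$ are exactly the $\iota$-semilinear endomorphisms, i.e.\ maps of the form $Y(v) = \alpha \bar v$ for a unique $\alpha \in K$. The integrality condition $Y(\mathbb{Z}^2)\subset \mathbb{Z}^2$ translates to $\alpha \in \mathfrak{a}_q \bar{\mathfrak{a}}_q^{-1}$, giving an explicit $\mathbb{Z}$-linear isomorphism $L_q \cong \mathfrak{a}_q \bar{\mathfrak{a}}_q^{-1}$. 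A direct determinant computation identifies $Q = -\det$ on $L_q$ with the norm form $N_{K/\mathbb{Q}}$ on the ideal. Since $\mathfrak{a}_q\bar{\mathfrak{a}}_q$ is principal of norm $N(\mathfrak{a}_q)$, we have $[\bar{\mathfrak{a}}_q^{-1}] = [\mathfrak{a}_q]$, so the ideal class of $L_q$ is $[\mathfrak{a}_q]^2$; this gives an invertible proper $\mathcal{O}_d$-module, which is exactly what endows $L_q$ with signed discriminant $d$.

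The remaining step, and the main obstacle, is pinning down the sign --- whether the class of $L_q$ matches $2[q]$ or $-2[q]$ in $\mathcal{C}_d^0$. Both $L_q$ and $\mathfrak{a}_q \bar{\mathfrak{a}}_q^{-1}$ carry natural orientations: the former inherited from $L'$ via the decomposition with $\langle X_q\rangle$, the latter coming from a chosen embedding $K \hookrightarrow \mathbb{C}$. The isomorphism above is orientation-reversing, essentially because the conjugation $\iota$ on $K$ has determinant $-1$ in any $\mathbb{Z}$-basis of $\mathcal{O}_d$; equivalently, $(e_1, \iota(e_1)X_q^{-1} e_1, X_q)$ has a negative Gram determinant relative to the chosen orientation of $V'$. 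Accounting for this sign flip yields $[L_q] = -[\mathfrak{a}_q^2] = -2[q]$. To double-check this orientation bookkeeping I would verify the formula on a small nontrivial example, e.g.\ $q = 2x^2 + xy + 3y^2$ with $d = -23$, where $\mathcal{C}_d^0$ has order $3$ and $2[q] \neq -2[q]$ so the sign is detectable.
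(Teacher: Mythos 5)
Your approach is sound and genuinely different from the paper's. The paper proves the lemma by direct computation: it exhibits three explicit generators of $L_q$ subject to the single relation $(c,b,a)$, computes the resulting ternary quadratic form, and matches this presentation against an equally explicit presentation of $\mathfrak{a}^2=\mathbb{Z}a^2+\mathbb{Z}a\omega+\mathbb{Z}\omega^2$ coming from the minimal polynomial of $\omega/a$; all orientations are settled by evaluating determinants. You instead observe that, for trace-zero $Y$, the condition $(Y,X_q)=0$ means $Y$ anticommutes with $X_q$, identify the anticommutant with the $\iota$-semilinear endomorphisms of the rank-one $K$-module $\mathbb{Q}^2$, and hence identify $(L_q,Q)$ with $\mathfrak{a}_q\bar{\mathfrak{a}}_q^{-1}$ equipped with the norm form; the class $[\mathfrak{a}_q]^2$ then drops out of $[\bar{\mathfrak{a}}_q^{-1}]=[\mathfrak{a}_q]$. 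This is conceptually cleaner and explains where the square and the conjugation ``come from,'' at the cost of importing the form--ideal dictionary; the paper's route is longer but self-contained and leaves no room for sign ambiguity. The supporting steps you use (the multiplier ring of $\mathfrak{a}_q$ is exactly $\mathcal{O}_d$ by primitivity, $\det(v\mapsto\alpha\bar v)=-N(\alpha)$, and $N(\mathfrak{a}_q\bar{\mathfrak{a}}_q^{-1})=1$ so no renormalization of the norm form is needed) are all correct.

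The one place the proposal is not yet a proof is the sign, which is half the content of the lemma. Your justification that the isomorphism is orientation-reversing (``because $\iota$ has determinant $-1$'') is only a heuristic, and the triple $(e_1,\iota(e_1)X_q^{-1}e_1,X_q)$ does not parse as written, since $e_1$ lives in $\mathbb{Q}^2$ and $X_q$ in $V'$. What is actually required is a comparison of the orientation of $L_q$ induced from $L'$ via $L_q\oplus\langle X_q\rangle$ with the orientation of $\mathfrak{a}_q\bar{\mathfrak{a}}_q^{-1}$ induced by the chosen embedding $K\hookrightarrow\mathbb{C}$; this is one explicit $3\times3$ determinant, and it is precisely the paper's computation $\det(\cdots)=-ad>0$. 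Your fallback of checking a single example with $2[q]\neq-2[q]$ does determine the sign, but only after you note that the comparison is given by a continuous nonvanishing function of the real coefficients $(a,b,c)$ on the connected cone of positive definite forms (equivalently, on one component of the cone of negative vectors $X_q$ in the signature $(2,1)$ space $V'(\mathbb{R})$), hence has constant sign. Make that uniformity explicit, or just do the determinant once symbolically, and the argument closes.
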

\begin{proof}

Since $(X_q,X_q)=2d$ we see that $L_q\oplus \mathbb{Z} X_q$ equals the kernel of the map
\[
    L'\to \mathbb{Z} / d \mathbb{Z}\hspace{1cm} X = \begin{pmatrix} - x & y \\ z & x\end{pmatrix}\mapsto \frac{1}{2}(X,X_q) = bx + a y - cz.
\]
The map is surjective since $(a,b,c)=1$, hence $L_q\oplus \mathbb{Z} X_q$ has index $-d$ in $L'$. Since $L'$ has (unsigned) discriminant $-2$, and $(X_q,X_q)=2d$ it follows that $L_q$ has discriminant $d$. 

To see that $[L_q]=-2[q]$ we show that both sides can be represented by the lattice
\[
    \mathfrak{L} = \mathbb{Z}^3 / (\mathbb{Z}(c, b,a))
\]
with quadratic form given by
\[
    Q_{\mathfrak{L}}(u,v,w) = a^2 u^2 + acv^2 + c^2 w^2 - abuv +(b^2 - 2ac)uw - bc vw,
\]
and oriented such that $(1,0,0),(0,1,0)$ is a positively oriented basis for $\mathfrak{L}\otimes \mathbb{R}$. 

We start with $L_q$. We prove that the matrices $(\begin{smallmatrix} -a & -b \\ & a\end{smallmatrix})$, $(\begin{smallmatrix}  & c \\ a & \end{smallmatrix})$, and $(\begin{smallmatrix} c &  \\ -b & -c\end{smallmatrix})$ generate $L_q$. Let $X=(\begin{smallmatrix} -x & y \\ z & x\end{smallmatrix})\in L_q$. From $(X,X_q)=0$ we have $bx=-ay+cz$ hence since $(a,b,c)=1$ there exists integers $s$ and $t$ such that $x= sa+tc$. This gives 
$a(y+sb)=c(z-tb)$ so
\[
    y = -sb + k \frac{c}{(a,c)} \hspace{1cm} z = tb + k\frac{a}{(a,c)}
\]
for some integer $k$. Then 
\[
    X = s \begin{pmatrix} -a & -b \\ & a \end{pmatrix} +\frac{k}{(a,c)}\begin{pmatrix}  & c \\ a & \end{pmatrix} - t \begin{pmatrix}  c & \\ -b & -c \end{pmatrix} 
\]
We would have been free to replace $(s,t)$ by $(s',t')= (s+\ell \frac{c}{(a,c)},t-\ell \frac{a}{(a,c)})$, which would replace $k$ by $k' = k-b\ell$, so we just have to choose $\ell\equiv k b^{-1} \mod (a,c)$ to ensure that $\frac{k'}{(a,c)}\in \mathbb{Z}$.

For these three generators of $L_q$ the $\mathbb{Z}$-module of relations is free of rank $1$. It is clear that $(c,b,a)\in \mathbb{Z}^3$ belongs to this module, and since $(a,b,c)=1$ it follows that $(c,b,a)$ is a generator. Hence we obtain the desired presentation for $L_q$.

Now observe that a direct calculation gives the ternary quadratic form
\begin{align*}
    (u,v,w)&\mapsto Q\left(u \begin{pmatrix} -a & -b \\ & a \end{pmatrix} + v \begin{pmatrix} & c \\ a &  \end{pmatrix} + w \begin{pmatrix} c & \\ -b & -c \end{pmatrix}\right)
    \\&=-\det \begin{pmatrix} -au + cw & -bu + cv \\ av -wb & au - cw \end{pmatrix} \\
    &= a^2u^2 + ac v^2 + c^2w^2 - abuv +(b^2 - 2ac)uw - bc vw,
\end{align*}
and that $(\begin{smallmatrix} -a & -b \\ & a \end{smallmatrix}), (\begin{smallmatrix} & c \\ a &  \end{smallmatrix})$ is a positively oriented basis of $L_q\otimes \mathbb{R}$ since
\[
    \det \begin{pmatrix} a & 0 & b \\  -b & c& -2c \\ 0 & a & 2a \end{pmatrix} = -ad > 0.
\]
Thus we have identified $L_q$ with $\mathfrak{L}$.

On the other hand, $-[q]$ corresponds to the ideal $\mathfrak{a} = \mathbb{Z}a+\mathbb{Z}\omega$, $\omega=\frac{-b+\sqrt{d}}{2}$ in $\mathcal{O}_d$ with quadratic form given by $\alpha\mapsto a^{-1} \Nm_{\mathbb{Q}(\sqrt{d}) / \mathbb{Q}}(\alpha)$. Hence $-2[q]$ corresponds to $\mathfrak{a}^2$ with the quadratic form  $ \alpha\mapsto a^{-2} \Nm_{\mathbb{Q}(\sqrt{d}) / \mathbb{Q}}(\alpha)$. We have $\mathfrak{a}^2 = \mathbb{Z}a^2 +\mathbb{Z}a\omega+ \mathbb{Z}\omega^2$, and the only relation between these generators is $ca^2+ba\omega+a\omega^2=0$ coming from the minimal polynomial $aX^2+bX+c$ of $\omega/a$. A direct calculation gives
\[
    \Nm_{\mathbb{Q}(\sqrt{d}) / \mathbb{Q}}(u a^2 +va\omega + w\omega^2) = a^4u^2 + a^3c v^2 + a^2c^2w^2 - a^3buv + a^2(b^2 - 2ac)uw - a^2bc vw.
\]
Also $a^2,a\omega$ is a positively oriented basis of $\mathfrak{a}\otimes \mathbb{R}$. This gives the desired presentation for $-2[q]$.
\end{proof}

\begin{lemma}\label{lem:lqq}
Suppose that $q$ is positive definite, and let $g$ be its content. Then $q$ is associated with $X\in(L')^2$ if and only if $X$ spans a subgroup of index $g$ in $L_q$ and $X$ is a positively oriented basis of $L_q\otimes \mathbb{R}$.
\end{lemma}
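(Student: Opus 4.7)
The plan is to reduce to the primitive case $g = 1$ using the $\GL_2(\mathbb{R})$-equivariance in Lemma \ref{lem:qii}(iv), together with the identity $L_q = L_{q_0}$, which holds because $X_q = g X_{q_0}$. Once a positively oriented $\mathbb{Z}$-basis $\lambda$ of $L_{q_0}$ is fixed, any $X \in V'(\mathbb{R})^2$ whose span is a finite-index subgroup of $L_q$ can be written $X = \lambda A$ for an integer matrix $A$ with $\abs{\det A}$ equal to the index. Lemma \ref{lem:qii}(iv) then gives $q_X = \det(A)\cdot q_\lambda$. Once the primitive identity $q_\lambda = q_0$ is established, the condition $q = q_X$ becomes $\det(A) = g$, which simultaneously encodes the index condition $\abs{\det A} = g$ and the positive-orientation condition $\det A > 0$. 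Both directions of the lemma then follow immediately.

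The primitive case is the main content. The first step is to verify by direct algebraic expansion---using the formulas of Lemma \ref{lem:qii} for the coefficients $a',b',c'$ of $q_\lambda$ in terms of the entries of $\lambda_i = (\begin{smallmatrix} -x_i & y_i \\ z_i & x_i\end{smallmatrix})$---that $b' x_i + a' y_i - c' z_i = 0$ for $i = 1, 2$. Equivalently $(\lambda_i, X_{q_\lambda}) = 0$, so $\lambda_i \in L_{q_\lambda}$. The lattices $L_{q_\lambda}$ and $L_{q_0}$ are both rank-$2$ sublattices of $L'$ containing the rank-$2$ lattice $\langle \lambda_1, \lambda_2 \rangle$, so they span the same $\mathbb{Q}$-subspace of $V'$; their orthogonal complements $\mathbb{Q} X_{q_\lambda}$ and $\mathbb{Q} X_{q_0}$ therefore coincide. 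This forces $q_\lambda = \mu q_0$ for some $\mu \in \mathbb{R}$, and Lemma \ref{lem:qii}(i) together with Lemma \ref{lem:lqt} give $\mu^2 = 1$ by comparing discriminants.

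The sign is pinned down by a direct $3 \times 3$ determinant computation: writing $\lambda_1, \lambda_2, X_{q_0}$ in the basis $e_1, e_2, e_3$ of $V'$ and expanding along the column containing $X_{q_0}$, then substituting $q_\lambda = \mu q_0$ into the $2 \times 2$ minors, yields the value $-\mu d_{q_0} = \mu \abs{d_{q_0}}$ (since $d_{q_0} < 0$). By the orientation convention for $L_{q_0}$ introduced just after Lemma \ref{lem:lqt}, $\lambda$ is positively oriented precisely when this determinant is positive, and so $\mu = +1$. The main obstacle I anticipate is not conceptual but bookkeeping: reconciling the orientation convention for $L_{q_0}$ (defined in terms of $X_{q_0}$ and the standard orientation of $L'$) with the positive-definiteness implicit in the association $X \mapsto q_X$, and tracking the signs so that the identification $\det(A) = +g$ emerges correctly from the two orientation hypotheses.
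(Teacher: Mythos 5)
Your proposal is correct, and while the outer reduction (writing $X=\lambda A$ for a fixed positively oriented basis $\lambda$ of $L_{q_0}$ and using Lemma \ref{lem:qii}(iv) to turn $q=q_X$ into $\det A=g$) is essentially the paper's, your proof of the central claim --- that the form associated to a positively oriented basis of $L_{q_0}$ is exactly $q_0$ --- takes a genuinely different route. The paper first identifies $\Lambda^2 L'$ with the group of binary quadratic forms, deduces a bijection between oriented corank-one subgroups of $L'$ and primitive forms, and then verifies that $L_q$ corresponds to $q_0$ by testing the explicit pair $X=\left(\left(\begin{smallmatrix}-a_0&-b_0\\&a_0\end{smallmatrix}\right),\left(\begin{smallmatrix}&c_0\\a_0&\end{smallmatrix}\right)\right)$, computing its index $a_0$ in $L_q$ via the generators produced in the proof of Lemma \ref{lem:lqt}, and dividing out $\det h=a_0$. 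You instead use the polynomial identity $b'x_i+a'y_i-c'z_i=0$ (equivalently $(\lambda_i,X_{q_\lambda})=0$, which does hold identically) to force $X_{q_\lambda}$ into the line $\mathbb{Q}X_{q_0}$, the discriminant statement of Lemma \ref{lem:lqt} to pin $\mu=\pm1$, and the orientation determinant $-b_0b'+2c_0a'+2a_0c'=-\mu d_0$ to fix the sign; this bypasses both the $\Lambda^2$ bijection and the explicit generators of $L_q$, and uses only the statement, not the proof, of Lemma \ref{lem:lqt}. Two points to make explicit in a write-up: in the ``only if'' direction you must first apply your orthogonality identity to $X$ itself, together with $q_X=q\neq 0$ (so $X_1,X_2$ are independent, the association being alternating), to conclude that $X$ spans a finite-index subgroup of $L_q$ before you may write $X=\lambda A$; and the proportionality $X_{q_\lambda}=\mu X_{q_0}$ is better deduced directly from the relations $(\lambda_i,X_{q_\lambda})=0$ than from ``$L_{q_\lambda}$ and $L_{q_0}$ have the same span,'' since a priori $X_{q_\lambda}=0$ must be ruled out --- which your discriminant comparison does.
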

\begin{proof}
It is clear from the definition that the map $X\mapsto q$ is alternating, hence it induces a linear map
\[
    \Lambda^2 L'\to \mathcal{Q},
\]
where $\mathcal{Q}$ denotes the group of binary quadratic forms. This is in fact a bijection, since the basis elements
\[
    \begin{pmatrix} -1 & \\ & 1 \end{pmatrix}\wedge \begin{pmatrix} & \\  1 & \end{pmatrix} \hspace{1cm} 
    \begin{pmatrix} & \\  1 & \end{pmatrix}\wedge \begin{pmatrix}  & 1 \\ &  \end{pmatrix} \hspace{1cm} 
    \begin{pmatrix} - 1 & \\  & 1 \end{pmatrix}\wedge \begin{pmatrix}  & 1 \\ &  \end{pmatrix} \\
\]
are mapped to the binary quadratic forms $x^2$, $xy$, and $y^2$.

We obtain a bijection
\[
    \left\{ \substack{\text{oriented subgroups $A\subset L'$} \\ \text{such that $L' /A\cong \mathbb{Z}$}}\right\} \cong \{\text{primitive forms $q\in \mathcal{Q}$}\},
\]
which sends $A$ to the binary quadratic form associated with any positively oriented basis of $A$. We claim that $L_q$ corresponds to $q_0=\frac{1}{g}q$ under the above bijection. Let $q_0(x,y)=a_0x^2+b_0xy+c_0y^2$, and let $A\subset L'$ be the span of $X=((\begin{smallmatrix}-a_0 & -b_0 \\ & a_0\end{smallmatrix}),(\begin{smallmatrix} & c_0 \\ a_0 & \end{smallmatrix}))$. An immediate calculation shows that the form associated with $X$ is $a_0q_0$. On the other hand
\[
    [L_q:A] = [L_q\oplus\mathbb{Z}X_q:A\oplus \mathbb{Z}X_q] = \frac{[L':A\oplus \mathbb{Z}X_q]}{[L':L_q\oplus \mathbb{Z}X_q]} = \frac{-a_0d_0}{-d_0} = a_0,
\]
where $d_0 = d /g$, by the previous proof. Writing $X=Yh$, where $Y$ is a positively oriented basis of $L_q$ and $h\in M_2(\mathbb{Z})$, we infer that $\det(h)=a_0$. It follows by Lemma \ref{lem:qii}(iv) that $q_0$ is the form associated with $Y$, which proves the claim.

Now consider any pair of vectors $X\in (L')^2$. We can write $X=Yh$, where $L' / \Span Y\cong \mathbb{Z}$ and $h\in M_2(\mathbb{Z})$ with $\det(h)>0$. By Lemma \ref{lem:qii}(iv) we see that $q$ is associated with $X$ if and only if $q_0$ is associated with $Y$ and $g=\det(h)$. Since $[\Span Y:\Span X]=\det(h)$, the claim implies that $q$ is associated with $X$ if and only if $Y$ is a positively oriented basis of $L_q$ and $[L_q:\Span X]=g$. If $X$ spans a subgroup of $L_q$, then automatically $Y$ must be a basis of $L_q$ with the same orientation as $X$. Therefore this proves the lemma.
\end{proof}
\begin{proof}[Proof of Proposition \ref{prp:mtq}]
By Lemma \ref{lem:lqq} $m(t,q)$ is equal to the number of positively oriented pairs of vectors $X\in L_q^2$ such that $[L_q:\Span X]=g$ and $Q(X)=T$. However, if $X\in L_q^2$, then the condition that $Q(X)=T$ implies that $[L_q:\Span X]=g$, since $L_q$ has discriminant $d /g$, and $t$ has discriminant $d$. Hence $m(t,q)$ is equal to the number of positively oriented pairs of vectors $X\in L_q^2$ such that $Q(X)=T$, or equivalently such that
\[
    Q(X_1x+X_2y) = t(x,y).
\]
By Lemma \ref{lem:lqt} this is equal to $r_{-2[q_0]}(t)$.
\end{proof}


\subsection{Computations}

For a closed oriented curve $c'$ in $M'$ and an oriented point $x\in M'$ let $w(x;c')$ be the intersection number of an oriented curve from $x$ to $\infty$ and $c'$, where intersections on $x$ are counted with half-multiplicity. When $c'$ is the boundary of a map $s'$ then $w(x;c')$ equals the intersection number of $s'$ and $x$, where similarly intersections on $x$ are counted with half-multiplicity. Hence we have
\[
    \iota_{L'}(T;s')_0 = \sum_{q\in \mathcal{Q}_d / \Gamma'} m(T,q) w(x(q);c').
\]
The following lemma explains how to compute  $w(x;c')$.

\begin{lemma}\label{lem:wna}
    Let $x\in M'$ be a positively oriented point and let $z\in \mathcal{F}$ be the unique lift of $x$. Let $c'$ be a closed geodesic associated with an indefinite binary quadratic form $q'$ of discriminant $d'>0$. Let $q_i'(x,y)=a_i'x^2+b_i'xy+c_i'y^2$, $i=1,\ldots,\ell$, be the list of forms in the $\Gamma'$-class of $q'$ for which the associated hyperbolic line in $\mathcal{H}^2$ intersects $\mathcal{F}$. Then
    \begin{equation}\label{wnaf}
        w(x;c') =  \sum_{i = 1}^\ell \sgn(a_i')\frac{\sgn\left(\frac{\sqrt{d'}}{2|a_i'|}-\left|z+\frac{b_i'}{2a_i'}\right|\right)+1}{2}
    \end{equation}
\end{lemma}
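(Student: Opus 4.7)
The plan is to lift the intersection computation to $\mathcal{H}^2$, where each contribution becomes the elementary question of whether an upward vertical ray meets a Euclidean semicircle.

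Since the boundary of $\mathcal{F}$ consists of the vertical lines $\Re = 0, 1$ together with semicircles whose endpoints lie on the real axis, the upward ray $\gamma(t) = z + it$, $t \geq 0$, lies entirely in $\mathcal{F}$; its projection $\bar{\gamma}$ to $M'$ is an embedded oriented curve from $x$ to the cusp $\infty$. Using $\bar{\gamma}$ to compute $w(x;c')$, the signed count of $\bar{\gamma} \cap c'$ in $M'$ (with half-multiplicity at the endpoint $x$, per the definition of $w$) equals the signed count of $\gamma \cap \pi^{-1}(c')$ in $\mathcal{H}^2$, where $\pi \colon \mathcal{H}^2 \to M'$ is the covering projection. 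Writing $\pi^{-1}(c') = \bigcup_{q'' \in \Gamma' q'} L(q'')$ as a union of oriented hyperbolic lines, each $L(q'')$ oriented from $\rho^+(q'')$ to $\rho^-(q'')$ by the convention used in the proof of Lemma \ref{lem:ghl}, any $L(q'')$ that meets $\gamma \subset \mathcal{F}$ must itself enter $\mathcal{F}$, hence $q''$ coincides with one of the enumerated forms $q_1', \ldots, q_\ell'$.

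For each $i$, the geodesic $L(q_i')$ is the Euclidean semicircle of center $c_i := -b_i' / (2a_i')$ and radius $r_i := \sqrt{d'} / (2 |a_i'|)$, so $\gamma$ meets $L(q_i')$ in at most one point, which exists precisely when $z$ lies in the closed half-disk bounded by the semicircle, i.e., when $|z + b_i'/(2 a_i')| \leq r_i$. Strict inequality corresponds to a transverse interior intersection (multiplicity $1$), equality corresponds to an intersection at the endpoint $z$ of $\gamma$ (multiplicity $\tfrac{1}{2}$, matching the definition of $w$), and strict reverse inequality corresponds to no intersection. The factor $\tfrac{1}{2}(\sgn(r_i - |z + b_i'/(2a_i')|) + 1)$ encodes exactly this $\{1, \tfrac{1}{2}, 0\}$-valued multiplicity.

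For the sign, parametrize $L(q_i') = \{c_i + r_i e^{i\theta} : \theta \in (0, \pi)\}$. When $a_i' > 0$, one has $\rho_i^+ = c_i + r_i$, so the orientation traverses $\theta$ increasing; the tangent $(-r_i \sin\theta, r_i \cos\theta)$ paired with the upward tangent $(0, 1)$ of $\gamma$ via the standard orientation of $\mathcal{H}^2$ yields determinant $r_i \sin\theta > 0$, hence sign $+1$. When $a_i' < 0$ the labels $\rho^\pm$ swap, the traversal reverses, and the sign becomes $-1$. In both cases the sign equals $\sgn(a_i')$. Combining with the multiplicity above and summing over $i$ produces \eqref{wnaf}.

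The delicate point is the bookkeeping in the second paragraph: one must check that each intersection of $\bar{\gamma}$ with $c'$ in $M'$ lifts uniquely to an intersection of $\gamma$ with exactly one of the $L(q_i')$, including at possible self-intersections of $c'$ (where two distinct $q_i', q_j'$ yield lines meeting at a common point of $\gamma$, each contributing separately) and at the endpoint $z = x$ (where half-multiplicities must be used). This follows from $\gamma \subset \mathcal{F}$ together with the fact that distinct $\Gamma'$-orbit representatives yield distinct oriented lifts of $c'$ in $\mathcal{H}^2$.
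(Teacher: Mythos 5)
Your proposal is correct and follows essentially the same route as the paper, whose proof is a terse version of exactly this argument: it observes that $\ell_{z,\infty}\subset\mathcal{F}$ and that the oriented intersection number of $\ell_{z,\infty}$ with the semicircle attached to $q_i'$ is the $i$th term of \eqref{wnaf}. Your write-up merely makes explicit the details (half-multiplicity at $z$, the sign computation, and the bookkeeping of lifts) that the paper leaves to the reader.
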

\begin{proof}
Let $c_i'\subset \mathcal{H}^2$ be the hyperbolic line corresponding to $q_i'$. The lemma follows as $\ell_{z,\infty}\subset \mathcal{F}$ and the oriented intersection number of $\ell_{z,\infty}$ and $c_i'$ is given by the $i$th term in \eqref{wnaf}.
\end{proof}

\begin{example}
    Let $z_1=\frac{1+\sqrt{-7}}{4}$ and $z_2=\frac{1+\sqrt{-23}}{12}$ and let $x_1,x_2\in M'$ be their images. Let $c'$ be the geodesic associated with $q'(x,y)=2x^2+2xy-y^2$. Then $w(x_1;c')=1$ and $w(x_2;c')=2$. This is depicted in Figure \ref{wne}.
\begin{figure}[ht]
        \centering
    \includegraphics{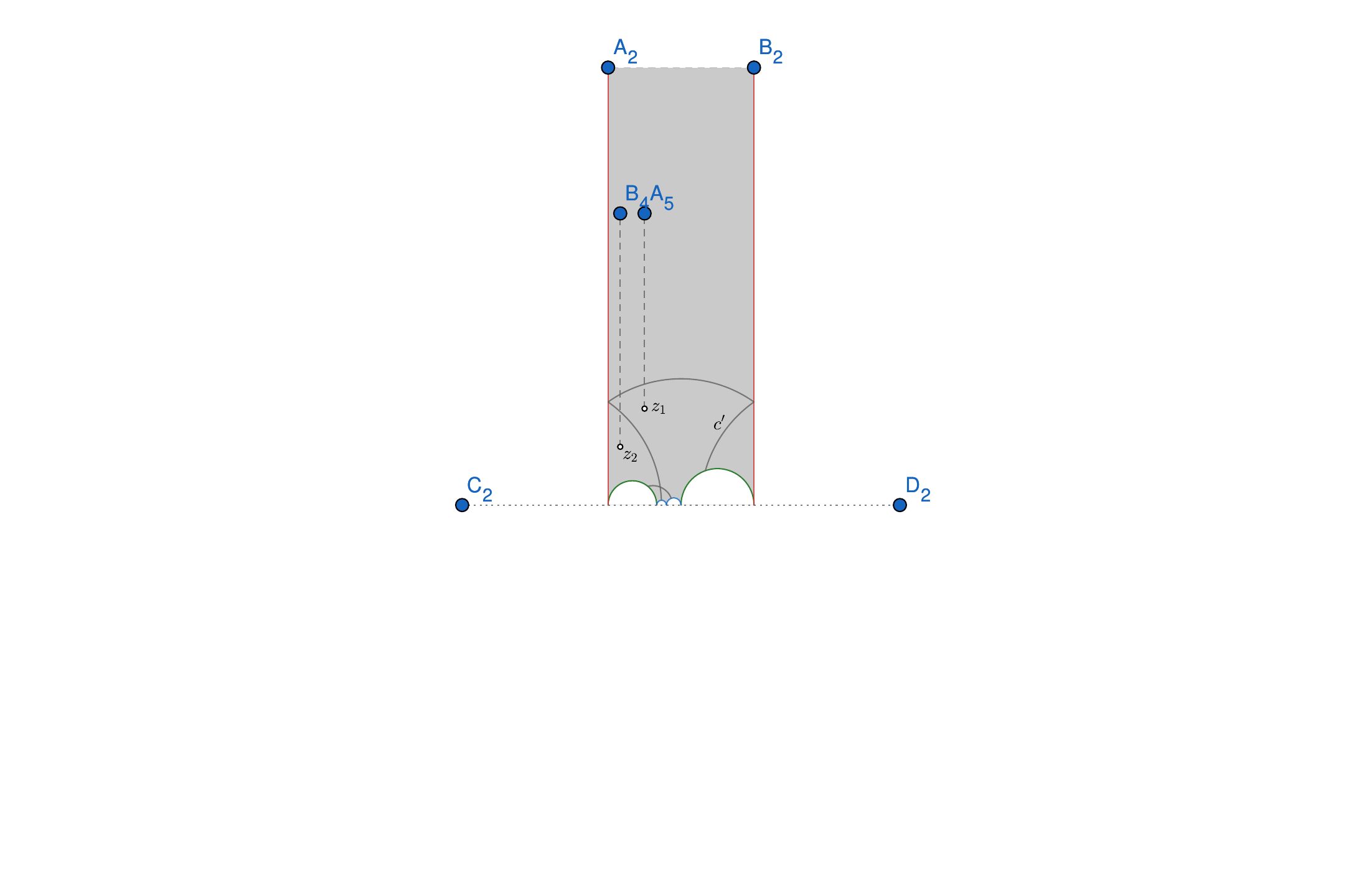}
        \caption{The segments of $c'$ which intersect  $\ell_{z_1,\infty}$ and $\ell_{z_2,\infty}$ are oriented counter clockwise, hence the intersection numbers are $+1$ and $+2$.}
        \label{wne}
\end{figure}
\end{example}

Since $m(T,q)$ only depends on the $\PSL_2(\mathbb{Z})$-orbit of $T$ and $q$, we can collect terms obtaining
\begin{equation}\label{cte}
    \iota_{L'}(T;s')_0 = \sum_{\mathfrak{C}\in \mathcal{C}_d} m(T,\mathfrak{C}) \sum_{q\in \mathfrak{C} / \Gamma'} w(x(q);c')-w(x(-\tilde q);c').
\end{equation}
Given a representative $q\in \mathfrak{C}$ with $z_q\in \mathcal{F}_1$, we can compute a set of representatives for $\mathfrak{C} / \Gamma'$ and the associated points in $\mathcal{F}$ by translating $z_q$ and $-1/z_{-\tilde q}$ by representatives of $\PSL_2(\mathbb{Z}) / \Gamma'$.

By \eqref{iip} we have
\begin{equation}\label{eq:wlr}
    \iota_{L}(T;s)_0 = \sum_{n,m\in \mathbb{Z}} \iota_{L'}\left(T - \begin{pmatrix} n^2 & nm \\ nm & m^2 \end{pmatrix} ;s'\right)_0.
\end{equation}
The matrix $T-(\begin{smallmatrix} n^2 & nm \\ nm & m^2\end{smallmatrix})$ is only positive definite for finitely many $(n,m)$ so the sum is finite. 

\begin{example}\label{exa:ntwe}
For $\det T<23/4$, every $T$ is $\PSL_2(\mathbb{Z})$-equivalent to a diagonal matrix and hence $\iota_{L'}(T;s')_0=0$ and $\iota_L(T;s)_0=0$.

On the other hand, let $T = (\begin{smallmatrix}
        2 & 1/2 \\ 1/2 & 3
\end{smallmatrix})$. A list of representatives $q$ for $\mathcal{C}_{-23}$ with $z_q\in \mathcal{F}_1$ is given in Table \ref{tab:rzm}. 
\begin{table}[ht]
\centering
\begin{tabular}{c c c c}
            $q$ & $z_q$  & $m(T;q)$ \\[.5ex]\hline \\[-2ex]
            $x^2-xy+6y^2$ & $\frac{1+\sqrt{-23}}{2}$ & $0$ \\
            $2x^2-xy+3y^2$ & $\frac{1+\sqrt{-23}}{4}$ &  $0$ \\
            $3x^2-xy+2y^2$ & $\frac{1+\sqrt{-23}}{6}$ &  $2$ 
\end{tabular}
\caption{Representatives for $\mathcal{C}_{-23}$}
\label{tab:rzm}
\end{table}

We see that only the orbit $\mathfrak{C}$ represented by $3x^2-xy+2y^2$ contributes to \eqref{cte}. For this orbit we computed
\[
    \sum_{q\in \mathfrak{C} / \Gamma'} w(x(q);c_i')-w(x(-\tilde q);c_i') = \begin{cases}
        0 & i =1,2 \\
        4 & i =3
    \end{cases}
\]
where $c_1'$, $c_2'$, and $c_3'$ are the three geodesics in Table \ref{tab:ght}. Letting $c'$ be their union we obtain
\[
    \iota_{L'}(T;s')_0 = 8.
\]

Next, we compute $\iota_L(T;s)$ using \eqref{eq:wlr}. For $n,m\in \mathbb{Z}$, the matrix $T'=T-(\begin{smallmatrix}
        n^2 & nm \\ nm & m^2
    \end{smallmatrix})$ is positive definite if and only if $(n,m)\in\{(0,0),(\pm1,0),(0,\pm1),(1,-1),(-1,1)\}$, and so $T'$ equals one of the matrices
    \[
        \begin{pmatrix} 2 & 1 /2 \\ 1 /2 & 3 \end{pmatrix},\;
        \begin{pmatrix} 1 & 1 /2 \\ 1 /2 & 3 \end{pmatrix},\;
        \begin{pmatrix} 2 & 1 /2 \\ 1 /2 & 2 \end{pmatrix},\;
        \begin{pmatrix} 1 & -1 /2 \\ -1 /2 & 2 \end{pmatrix}.
    \]
    The last three matrices have $\det T'<23/4$, and hence $\iota_{L'}(T';s')_0=0$. Thus by \eqref{eq:wlr}
    \[
        \iota_L(T;s)_0 = \iota_{L'}(T;s')_0 = 8.
    \]
\end{example}

\begin{remark}
If $c':C\to M'$ is a geodesic with $[c']\neq0$, let $c_0'$ be a union of horocycles centered at $x_2$, $x_3$, and $x_4$ such that $[c_0']=[c']$. Then there exists a surface $S_0$ and a smooth map $s_0':S_0\to M'$ with boundary $c'-c_0'$. We can choose $c_0'$ such that $w(x(q);c_0')=0$ for all positive binary quadratic forms $q$ of discriminant $d$. Then it follows that
\begin{equation}\label{cte2}
    \iota_{L'}(T;s_0')_0 = \sum_{\mathfrak{C}\in \mathcal{C}_d} m(T,\mathfrak{C}) \sum_{q\in \mathfrak{C} / \Gamma'} w(x(q);c')-w(x(-\tilde q);c').
\end{equation}
Let $s_0$ (resp. $c$) be the composite map $S_0\to M'\to M$ (resp. $C\to M'\to M$). Then $\iota_L(T;s_0)_0$ can also be computed by \eqref{eq:wlr}, and if we choose $c_0'$ close enough to the cusps, then this computes the linking numbers of $c$ as in Remark \ref{rem:sgl}.

In this way, we obtained the data in Table \ref{tab:nzl} (the curve $c$ in that example corresponds to $c_3'$ in Table \ref{tab:ght}).
\end{remark}


\bibliographystyle{plain}
\bibliography{bibliography}

\end{document}